\documentclass[12pt,reqno]{amsart}
\usepackage{latexsym,amsmath,amssymb, amsthm, mathscinet}
\usepackage{cases, verbatim}

\usepackage{amsfonts,amsmath,amsthm, amssymb}
\usepackage{cases}
\usepackage[usenames]{color}
\usepackage{enumerate}
\usepackage{bm}
\usepackage{graphicx}

\theoremstyle{plain}
\newtheorem{thm}{Theorem}[section]

\newtheorem{defn}[thm]{Definition}

\newtheorem{lem}[thm]{Lemma}
\newtheorem{cor}[thm]{Corollary}
\newtheorem{prop}[thm]{Proposition}
\theoremstyle{remark}
\newtheorem{rem}{\bf{Remark}}
\numberwithin{equation}{section}

\newcommand{\R}{\mathbb{R}}

\newcommand{\ol}{\overline}

\newcommand{\supp}{{\rm supp}\,}

\newcommand{\dist}{{\rm dist}\,}

\begin{document}
\title[Selection of Critical Solutions]{Selection of Critical Solutions in Non-Monotone Vanishing Discount Problems}

\author{Panrui Ni}
\address[P. Ni]{
Department 1: Department of Mathematics, Faculty of Science and Engineering, Waseda University, 3-4-1 Okubo, Shinjuku-ku, Tokyo, 169-8555, Japan; Department 2: Shanghai Center for Mathematical Sciences, Fudan University, Shanghai 200438, China}
\email{panruini@gmail.com}

\author{Jun Yan}
\address[J. Yan]{
	School of Mathematical Sciences, Fudan University, Shanghai 200433, China}
\email{yanjun@fudan.edu.cn}

\author{Maxime Zavidovique}
\address[M. Zavidovique]{
	Sorbonne Universit\'e, Universit\'e de Paris Cit\'e, CNRS, \ Institut de Math\'ematiques de Jussieu-Paris Rive Gauche, Paris 75005, France}
\email{mzavidovi@imj-prg.fr}


\makeatletter
\@namedef{subjclassname@2020}{\textup{2020} Mathematics Subject Classification}
\makeatother

\keywords{Discounted Hamilton--Jacobi equations; Convergence; Weak KAM theory; Mather measures; Selection problem}

\subjclass[2020]{
    35F21, 
    37J51, 
    49L25, 
    35B40}  

\begin{abstract}
We study a generalized vanishing discount problem for Hamilton--Jacobi equations without assuming monotonicity either pointwise or in the averaged sense with respect to all Mather measures. Specifically, we consider
\[
\lambda a(x)u(x)+H\big(x,Du(x)\big)=c_0,
\]
and assume that, for every sufficiently small $\lambda>0$, this equation admits a strict subsolution bounded from below uniformly with respect to $\lambda$. We prove that, under a smallness assumption on the Mather quotient, the maximal viscosity solution converges uniformly as $\lambda\to 0^+$ under a sign condition on the discount coefficient $a(x)$ determined by a prescribed part of the Aubry set. The limit is characterized explicitly in terms of the Peierls barrier and the selected Mather measures. As an application, the elementary solution of the critical Hamilton--Jacobi equation
\[
H\big(x,Du(x)\big)=c_0
\]
associated with any isolated static class can be realized as the vanishing discount limit. This provides the first mechanism for selecting multiple critical solutions in vanishing discount problems beyond the classical monotonicity framework, showing that different families of Mather measures yield different limiting critical solutions.

We finally propose a variant of the discounted problems for which the strict subsolution hypothesis is automatically verified and hence applies to more general Hamiltonians $H$ and functions $a$.

\end{abstract}

\date{\today}

\maketitle



\section{Introduction}

In this paper, we investigate whether different solutions of
\begin{equation}\label{E0}\tag{HJ$_0$}
  H\big(x,Du(x)\big)=c_0\quad \textrm{in}\quad M
\end{equation}
can be obtained as the limit of the maximal viscosity solution of
\begin{equation}\label{E}\tag{E$_\lambda$}
  \lambda a(x)u(x)+H\big(x,Du(x)\big)=c_0 \quad \textrm{in}\quad M
\end{equation}
as $\lambda\to 0^+$ when we choose different $a(x)$. Here $M$ is a closed Riemannian manifold, $H\in C(T^*M)$ is convex and superlinear in the gradient variable, $a(x)$ is continuous, and $c_0$ is the critical value associated with $H$. Since the equation is non-monotone in $u$, existence and compactness are not automatic. We therefore assume that, for every sufficiently small $\lambda>0$, \eqref{E} admits a strict subsolution bounded from below uniformly with respect to $\lambda$. Throughout the paper, solutions, subsolutions, and supersolutions are meant in the viscosity sense and assumed continuous. Relevant definitions are recalled in Section \ref{pre}.

To obtain the selection result, we introduce a dynamical concept called \emph{static classes}, a natural partition of the Aubry set into equivalence classes following Ma\~n\'e. The corresponding quotient set is called the \emph{Mather quotient set}. In this paper, we assume the Mather quotient set has 1-dimensional Hausdorff measure zero. Let $\widetilde{\mathfrak{M}}$ be the set of all Mather measures. For $\tilde \mu\in \widetilde{\mathfrak{M}}$, we denote by $\mu$ its projection. Assume there is an isolated subset $\mathcal A_1$ of the projected Aubry set $\mathcal A$. We prove that by setting
\[a(x)>0\text{ on }\mathcal A_1\text{ and }\int_{TM} a(x)\, d\tilde\nu<0\text{ for all }\tilde\nu\text{ with }\supp(\nu)\cap \mathcal A_1=\emptyset,\]
the maximal viscosity solution of \eqref{E} converges uniformly to
\[\inf_{\tilde{\mu}\in\widetilde{\mathfrak M}_1}\frac{\int_{TM} h^\infty(y,x)a(y)\, d\tilde\mu(y,v)}{\int_{TM} a(y)\, d\tilde\mu(y,v)}\quad \text{as }\lambda\to 0^+,\]
where $h^\infty(\cdot,\cdot)$ is defined in \eqref{barr} and $\widetilde{\mathfrak M}_1$ is the set of all Mather measures $\tilde\mu$ with $\supp(\mu)\subset\mathcal A_1$. The limit above may be different from the one selected by the limit in \cite{Da4}. As shown in \cite{Ber}, there is a one-to-one correspondence between static classes and elementary solutions of \eqref{E0}. If we choose $\mathcal A_1$ to be an isolated static class, we can select the elementary solution associated with this static class; see Corollary \ref{cor1} below.

Recently, \cite{QC} studied the selection problem for \eqref{E0} through a two-step perturbation involving a vanishing discount term and a small potential perturbation. This approach was the first to select different solutions of \eqref{E0}, where the potential term plays a key role in removing unwanted minimizing measures. See also \cite{QZ} for the degenerate second-order setting. Related selection phenomena for mechanical Hamilton--Jacobi equations are studied in \cite{MNT}, where the relative scaling between the vanishing discount and viscosity terms leads to different limiting
critical solutions. In the present work, by carefully choosing the signs of the discount coefficient $a(x)$, one can select different solutions relying solely on the vanishing discount term. This result emphasizes a purely dynamical mechanism driving the selection process and offers an intrinsic perspective on the problem.

\subsection*{History of the problem}

The existence of solutions to the stationary equation
\begin{equation}\label{hjc}
H\big(x,Du(x)\big)=c \quad \textrm{in}\quad \mathbb T^d
\end{equation}
is known as the \emph{cell problem}, which is a central issue in the theory of Hamilton--Jacobi equations. 
This problem was solved via the so-called ergodic (or discounted) approximation in \cite{hom}. 
Let $\lambda>0$ and let $u_\lambda$ be the unique solution of
\[
\lambda u(x)+H\big(x,Du(x)\big)=0 \quad \textrm{in}\quad \mathbb T^d.
\]
It was shown in \cite{hom} that there exists a sequence $\lambda_k\to0^+$ such that $-\lambda_k u_{\lambda_k}$ converges uniformly to a constant $c_0$, and $u_{\lambda_k}-\min_{\mathbb T^d}u_{\lambda_k}$ converges uniformly to a solution of \eqref{hjc} with $c=c_0$. 
Moreover, $c_0$ is the unique constant for which \eqref{hjc} admits solutions and is called the \emph{critical value}.

At that time, it was unclear whether different sequences $\lambda\to0^+$ would lead to the same limit. 
This question was first studied under restrictive assumptions in \cite{G,IM}. 
A complete positive answer was later obtained in \cite{Da4}, where the uniform convergence of the unique solution $u_\lambda$ of
\begin{equation}\label{lde}\tag{${\textrm{HJ}}_\lambda$}
\lambda u(x)+H\big(x,Du(x)\big)=c_0 \quad \textrm{in}\quad M
\end{equation}
as $\lambda\to0^+$ was established. 
Here $M$ is a closed connected manifold and $H$ is continuous, coercive and convex in the momentum variable. 
This type of problem is referred to as the \emph{vanishing discount problem}. 
The convexity of $H$ plays a crucial role, and convergence may fail without this assumption; see \cite{Z2}. Quantitative convergence rates for the vanishing discount problem under hyperbolicity assumptions were recently studied in \cite{NiRate}.

The vanishing discount problem has subsequently been studied in a variety of settings. For possibly degenerate second-order equations, an approach based on the nonlinear adjoint method was used in \cite{MT} and later adapted in \cite{Zh}. A different variational framework based on convex duality and viscosity Mather measures was developed in \cite{IMT,IMT2}. Other developments include discrete models \cite{Da1,n1,Su,Zbook}, mean field games \cite{CP,IMWX}, weakly coupled Hamilton--Jacobi systems \cite{Da6,Da5,Ish4,Ish5}, and non-compact manifolds \cite{Da7,Ish6}. 
For nonlinear generalizations, known as the vanishing contact structure problem, we refer to \cite{V3,V1,GMT,V2}. 
Other extensions of the vanishing discount problem have been investigated in \cite{TZ,WZ} and the references therein.

As a degenerate but still monotone case, \cite{Z} studied the convergence of solutions to
\begin{equation}\label{gendis}\tag{$\overline{\textrm{HJ}}_\lambda$}
\lambda a(x)u(x)+H\big(x,Du(x)\big)=c_0 \quad \textrm{in}\quad M,
\end{equation}
where $a(x)\geqslant 0$ on $M$ and $a(x)>0$ on the projected Aubry set of $H$. 
This equation is closely related to the present work and is also connected to optimization problems in economics; see \cite{factor}. Here we also point out a model in \cite{BLL} derived from economics. Although this model is a viscous Hamilton--Jacobi equation, the discount coefficient $a(x)$ is assumed to change sign, which is quite related to our case in the present paper. 
Later, the degenerate vanishing contact structure problem was investigated in \cite{CFZZ}, where it was shown that Mather measures play a central role in the convergence analysis.

It is worth pointing out that apart from \cite{BLL}, all the works mentioned above rely on a non-decreasing (monotonicity) assumption with respect to the unknown function $u$. 
Once this assumption is violated, many fundamental tools break down: solutions may fail to exist, comparison principles no longer hold, and uniqueness is generally lost. 
In \cite{Da3,V4}, the convergence of minimal solutions of \eqref{lde} as $\lambda\to0^-$ was studied under restrictive hypotheses. 
For genuinely non-monotone vanishing discount problems, the first example exhibiting both convergent and divergent families of solutions was provided in \cite{n3}, revealing phenomena absent from the monotone theory. 
Subsequently, the generalized vanishing discount problem without the non-decreasing assumption was further developed in \cite{DNYZ}.

\subsection*{Motivation of this paper}

Previous works mainly focus on the convergence of discounted solutions\footnote{A \emph{discounted solution} refers to a viscosity solution of the generalized discounted equation \eqref{gendis}, while a \emph{critical solution} refers to a viscosity solution of \eqref{E0}.}. 
In contrast, the present paper addresses a more refined problem: how to select \emph{different critical solutions} via limits of discounted solutions. 
As observed in \cite{DNYZ}, there exists a certain critical solution that is the only possible limit for all bounded discounted solutions, so previous works could only select this particular solution. 

In \cite{GL}, a selection principle based on the Freidlin--Wentzell large deviation principle was proposed, yielding a critical solution different from the one obtained via the vanishing discount process. 
For instance, as in \cite[Section~5.3]{GL}, we consider
\[
  \lambda u(x)+u'(x)\big(u'(x)-U'(x)\big)=0 \quad \textrm{in}\quad \mathbb S^1\simeq [0,1),
\]
where $U\in C^\infty(\R)$ is skew periodic. The unique discounted solution is $u_\lambda\equiv 0$, so the selected critical solution $u_0$ is trivial. 
In this paper, we show that the \emph{generalized vanishing discount process} can be used to select various critical solutions. Furthermore, we show that under appropriate hypotheses it selects prescribed critical solutions called elementary solutions; see Section \ref{ex1}. 
This represents a natural development of the vanishing discount problem, highlighting for the first time how the dynamical structure of the Aubry set governs the asymptotic behavior of discounted solutions.

According to \cite{DNYZ,n3}, one cannot in general expect all discounted solutions of \eqref{E} to converge as $\lambda$ vanishes. 
This motivates us to consider the convergence of the \emph{maximal discounted solution} of \eqref{E}. Here we note that solutions may fail to exist or may lose compactness in the non-monotone setting. Our hypothesis is formulated differently from the general convergence result
in \cite{DNYZ}, where an equi-bounded family of discounted solutions is
assumed a priori. Here we instead assume that, for every sufficiently small $\lambda>0$,
\eqref{E} admits a strict subsolution $v_\lambda$ uniformly bounded from below. 
Sections \ref{ex1} and \ref{ex2} show that hypothesis {\rm (S)} can hold for \eqref{E}. Section
\ref{sec:4.3} shows that, for a generalized problem \eqref{EA}, the analogue of
{\rm (S)} may fail for $A=0$, which corresponds to \eqref{E}, and even for small positive $A$.
On the other hand, Remark \ref{rem:shift} shows that it holds for
all sufficiently large $A$.
A second key novelty of this paper is that we establish convergence when the integral of the discount coefficient $a(x)$ may be negative for some Mather measures. 
This introduces essential technical challenges, as previous works \cite{CFZZ,DNYZ} rely on the assumption that $a(x)$ is strictly positive when integrated against all Mather measures. Removing this monotonicity condition imposed on all Mather measures gives rise to genuinely new selection phenomena.

Finally, a key ingredient in our analysis is a large-time behavior result (Lemma \ref{stab}), building on earlier work \cite{n2}. To our knowledge, this reveals for the first time a connection between the vanishing discount problem and the large-time behavior.

\subsection*{Statement of the main result}

Let $M$ be a closed connected smooth manifold. Denote by $TM$ and $T^*M$ the tangent and cotangent bundles of $M$, with points $(x,v)\in TM$ and $(x,p)\in T^*M$. Let $\|\cdot\|_x$ and $d(\cdot,\cdot)$ be the norm on $T_xM$ and the distance function induced by the Riemannian metric respectively, and let $\pi:TM\to M$ be the canonical projection. Assume that $H:T^*M\to\mathbb R$ is continuous and satisfies:
\begin{itemize}
\item [(H1)] {\bf Convexity:} $p\mapsto H(x,p)$ is convex for all $(x,p)\in T^*M$.
\item [(H2)] {\bf Superlinearity:} $\lim\limits_{\|p\|_x\to+\infty} H(x,p)/\|p\|_x = +\infty$ uniformly in $x$.
\end{itemize}

The associated Lagrangian $L:TM\to\mathbb R$ is defined by
\[
L(x,v)=\sup_{p\in T^*_xM} \big(\langle p,v\rangle_x - H(x,p)\big).
\]
Under (H1), (H2), $L(x,v)$ is continuous on $TM$, convex in $v$ for all $(x,v)\in TM$, and $\lim\limits_{\|v\|_x\to+\infty} L(x,v)/\|v\|_x = +\infty$ uniformly in $x$;
see \cite[Appendix A.2]{CS}.

Let $h^\infty : M \times M \to \mathbb{R}$ be the Peierls barrier defined in \eqref{barr}. 
The following pseudo-distance was first introduced in \cite{Ma3}; see also \cite{global} for a detailed discussion:
\[
d_H(x,y) := h^\infty(x,y) + h^\infty(y,x).
\]
Let $\mathcal A$ be the projected Aubry set defined in \eqref{pA}. 
We define an equivalence relation on $\mathcal A$ by declaring $x \sim y$ if $d_H(x,y)=0$. 
The equivalence classes are called the \emph{static classes}. So $d_H$ is a distance function on the quotient set formed by static classes. This quotient, endowed with the distance $d_H$, is denoted by $\mathcal Q$ and is called the Mather quotient.
Assume:

\begin{itemize}
\item [($\diamond$)] The Mather quotient set $\mathcal Q$ has 1-dimensional Hausdorff measure zero.
\end{itemize}
The assumption $(\diamond)$ holds for instance if we have one of the following assumptions:
\begin{itemize}
\item [(1)] the Mather quotient is countable;
\item [(2)] the dimension of the manifold $M$ is 2 and the Hamiltonian is $C^2$ Tonelli;
\item [(3)] the dimension of the manifold $M$ is 3 and the Hamiltonian $H(x,p)$ is $C^4$ Tonelli.
\end{itemize}
The second and third cases follow from the work in \cite{FFR}, where the authors prove that in cases (2) and (3) the 1-dimensional Hausdorff measure of the Mather quotient set is zero. A special case of assumption $(\diamond)$ is provided by some symmetric Hamiltonians satisfying
$H(x,p)=H(x,-p)$. 
In this case, 
\[
\mathcal A=\Big\{x\in M\mid H(x,0)=\max_{y\in M} H(y,0)\Big\},
\]
see \cite[Proposition 7]{Da3}. 
Consequently, if $H(x,0)$ admits only finitely or countably many maximizers, then the Aubry set consists of finitely or countably many points, each forming a distinct static class. Therefore, assumption $(\diamond)$ is satisfied in this setting.

Assumption $(\diamond)$ is closely related to the total disconnectedness
\footnote{That is, each connected component of the set consists of a single point.}
of the Mather quotient set induced by the equivalence relation defined above; see, for instance, \cite{FFR,Ma1,So}. 
In general, however, this property does not hold; cf. \cite{BIK,Ma4}.

\medskip

The main result of this paper is as follows.

\begin{thm}\label{thm1}
Assume \emph{(H1)}, \emph{(H2)}, and \emph{($\diamond$)}. Assume $\mathcal A$ is not connected and let us consider a nonempty subset $\mathcal A_1\subsetneq \mathcal A$ such that
\[\dist(\mathcal A_1,\mathcal A\setminus \mathcal A_1)\geqslant \delta>0,\]
where $\dist(\cdot,\cdot)$ is the distance function between two subsets of $M$ induced by $d$. Define
\[\widetilde{\mathfrak M}_1:=\{\tilde\mu\in\widetilde{\mathfrak M}\mid \supp(\mu)\subset \mathcal A_1\}.\]
Let $a\in C(M)$ satisfy
\begin{itemize}
\item[(A)] $a(x)>0$ on $\mathcal A_1$ and
\[\int_{TM} a(x)\, d\tilde\nu<0\text{ for all }\tilde\nu\in\widetilde{\mathfrak M}\text{ with }\supp(\nu)\cap \mathcal A_1=\emptyset.\]
\item[(S)] There exist $\lambda_0>0$ and $C>0$ such that, for every $\lambda\in(0,\lambda_0)$, equation \eqref{E} admits a strict viscosity subsolution $v_\lambda$ satisfying $v_\lambda\geqslant -C$ on $M$.
\end{itemize}
Then, for every $\lambda\in(0,\lambda_0)$, \eqref{E} admits a maximal solution $u_\lambda$, and $u_\lambda$ converges uniformly as $\lambda\to0^+$ to
\[u_0:=\sup_{w\in\mathcal S}w,\]
where
\[\mathcal S:=\bigg\{w\mid w\text{ is a subsolution of }\eqref{E0}\text{ with }\int_{TM}a(x)w(x)\,d\tilde\mu\leqslant0\quad\text{for all }\tilde\mu\in\widetilde{\mathfrak M}_1\bigg\}.\]
Moreover,
\[u_0(x)=\inf_{\tilde{\mu}\in\widetilde{\mathfrak M}_1}\frac{\int_{TM} h^\infty(y,x)a(y)\, d\tilde\mu(y,v)}{\int_{TM} a(y)\, d\tilde\mu(y,v)}.\]
\end{thm}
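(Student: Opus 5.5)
The plan has five steps: existence of $u_\lambda$, uniform bounds, the two inequalities $\liminf u_\lambda\ge u_0$ and $\limsup u_\lambda\le u_0$, and finally the barrier formula for $u_0$.

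\emph{Existence and a priori bounds.} I will obtain the maximal solution $u_\lambda$ by Perron's method, taking the supremum of all subsolutions of \eqref{E} that lie above the strict subsolution $v_\lambda$ from (S). For this to make sense, the family of subsolutions must be bounded from above uniformly in $\lambda$. By (H2), subsolutions are equi-Lipschitz on $\{a\neq0\}$ once they are bounded, but no a priori bound is available. To get one I would integrate against Mather measures. If $w$ is a subsolution of \eqref{E}, then $\lambda\int a w\,d\tilde\mu\le 0$ for every $\tilde\mu\in\widetilde{\mathfrak M}$. This uses the standard fact that for a Mather measure $\int (c_0-H(x,Dw))\,d\tilde\mu\le 0$ in the weak sense, obtained by mollification and closedness. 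Taking $\tilde\mu\in\widetilde{\mathfrak M}_1$ and using $a>0$ on $\mathcal A_1$, this forces $\min_{\mathcal A_1}w\le 0$. Next I control oscillations by showing that $\lambda a w$ is small along calibrated curves. For this I will use the lower bound $v_\lambda\ge -C$, together with the fact that $w\ge v_\lambda$ for all subsolutions above the strict one. Altogether I expect a uniform bound $\|u_\lambda\|_\infty\le C'$ and equi-Lipschitz continuity. The compactness then gives subsequential limits, and every limit is a solution of \eqref{E0}.

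\emph{Lower bound $\liminf u_\lambda\ge u_0$.} For $w\in\mathcal S$, I plan to build subsolutions of \eqref{E} close to $w$, for example $w-\varepsilon$ modified near $\mathcal A$. Maximality of $u_\lambda$ then gives $u_\lambda\ge w-o(1)$. The difficulty is that $a$ changes sign, so $w-\varepsilon$ is not automatically a subsolution. I would handle this with the standard perturbation. Take a strict critical subsolution $\phi$ off $\mathcal A$ (Fathi--Siconolfi) and, by a convex combination, make $w$ strict outside a neighbourhood of $\mathcal A$. Near $\mathcal A_1$ I shift $w$ so that $a w\le 0$ in an averaged sense. Near $\mathcal A\setminus\mathcal A_1$ I use $\int a\,d\tilde\nu<0$: this allows solving a cohomological equation $a+\mathrm{div}$-type correction, i.e.\ finding a $C^1$ function $g$ with $a(x)+\langle Dg,\partial_pH\rangle<0$ near $\mathcal A\setminus\mathcal A_1$, so that $w+\lambda g\,w$-type corrections become subsolutions. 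The separation $\dist(\mathcal A_1,\mathcal A\setminus\mathcal A_1)\ge\delta$ allows these local constructions to be glued.

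\emph{Upper bound $\limsup u_\lambda\le u_0$.} Let $\bar u$ be a subsequential limit. It suffices to show $\bar u\in\mathcal S$. The integral inequality $\int a u_\lambda\,d\tilde\mu\le0$ for $\tilde\mu\in\widetilde{\mathfrak M}_1$ passes to the limit, so $\bar u\in\mathcal S$ and hence $\bar u\le u_0$. I expect the real content to lie rather in the matching lower bound, or equivalently in showing that every limit equals $u_0$. Here I would use the large-time behaviour result, Lemma \ref{stab}. The idea is to view $u_\lambda$ as an approximate stationary solution of the Lax--Oleinik semigroup and compare it with the semigroup applied to elements of $\mathcal S$. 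Assumption $(\diamond)$ enters to ensure that a critical subsolution is determined by its values on $\mathcal A$ via $h^\infty$, and that functions constant on static classes behave well: with 1-dimensional Hausdorff measure zero, every critical subsolution is constant on each static class.

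\emph{Formula for $u_0$.} Since $u_0$ is determined by its values on $\mathcal A_1$ through $u_0(x)=\min_{y\in\mathcal A}(u_0(y)+h^\infty(y,x))$, I will reduce to a linear-programming duality. The functions $x\mapsto h^\infty(y,x)-c$ are subsolutions. Imposing $\int a(y)(h^\infty(y,x)-c)\,d\tilde\mu\le0$ gives exactly the formula, and a minimax argument over the convex set $\widetilde{\mathfrak M}_1$ and the subsolutions identifies the supremum, as in \cite{Da4,Z}. The main obstacle I expect is the lower-bound construction in the second step. There the negative averages of $a$ on Mather measures avoiding $\mathcal A_1$ must be exploited to build genuine subsolutions of the non-monotone equation.
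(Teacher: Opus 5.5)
There is a genuine gap, and it is in the lower bound $\liminf u_\lambda\geqslant u_0$, the step you yourself flag as the main obstacle. Your existence argument and a priori bounds are fine: integrating a subsolution of \eqref{E} against some $\tilde\mu\in\widetilde{\mathfrak M}_1$ gives a point where it is $\leqslant 0$, and the Gronwall argument of Lemma \ref{bM} then bounds it from above. Your $\limsup u_\lambda\leqslant u_0$ is also correct; it is Lemma \ref{leqA}.

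You want to build, for each $w\in\mathcal S$, subsolutions of \eqref{E} that are $o(1)$-close to $w$. Your construction uses only (A), the separation of $\mathcal A_1$ and a strict critical subsolution off $\mathcal A$; hypothesis (S) plays no role there. No construction of this kind can work. Take the example of Section \ref{sec:4.3}: $\mathcal A=\{0,\tfrac12\}$, $\mathcal A_1=\{\tfrac12\}$, $a=-\cos(2\pi x)$. There (H1), (H2), $(\diamond)$, (A) and the separation all hold.
\begin{itemize}
\item Every critical subsolution satisfies $w(\tfrac12)-w(0)\geqslant\varepsilon_*>0$, since $Dw\geqslant p_->0$ a.e.\ on $(0,\tfrac12)$.
\item Your own integration against $\delta_{(0,0)}$ and $\delta_{(\frac12,0)}$ shows that every subsolution $W$ of \eqref{E}, for any $\lambda>0$, satisfies $W(\tfrac12)\leqslant0\leqslant W(0)$.
\item Hence no subsolution of \eqref{E} lies within $\varepsilon_*/2$ of any critical subsolution.
\end{itemize}
The failing step is your correction near $\mathcal A\setminus\mathcal A_1$. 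What it really needs there is $\int aw\,d\tilde\nu<0$, which is a sign condition on $w$, not $\int a\,d\tilde\nu<0$. In the example, every $w\in\mathcal S$ has $a(0)w(0)\geqslant\varepsilon_*$. You also cannot raise $w$ on $\mathcal A\setminus\mathcal A_1$ independently, because $w(y)\leqslant w(x)+h^\infty(x,y)$ caps its values there by those on $\mathcal A_1$. Moreover, $H$ is only continuous and convex, so $\partial_pH$ and a flow on $\mathcal A$ are not available. So (S) must enter the convergence argument itself.

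In the paper, (S) enters through Lemma \ref{stab}, but not to compare $u_\lambda$ with semigroup images of elements of $\mathcal S$. The argument runs as follows.
\begin{itemize}
\item Lemma \ref{stab} is applied to $u_\lambda-\varepsilon$, which lies between $v^+_\lambda$ and $u_\lambda$. It shows that the measures carried by backward $u_\lambda$-calibrated curves satisfy $\int a\,d\tilde\mu_{z,\lambda}\geqslant0$ (Lemma \ref{mu*>0}).
\item Their limits satisfy $\int a u_*\,d\tilde\mu_z\geqslant0$ (Lemma \ref{auge}).
\item These are combined with Lemma \ref{extm}, with (A), and with $u_0\in\mathcal S$ (Lemma \ref{u0<A}). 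The last point is not automatic, since a supremum of elements of $\mathcal S$ is not obviously in $\mathcal S$; it needs Lemma \ref{w1w2}.
\item Together these force the backward calibrated curves from a point where $u_0-u_*=m>0$ to leave every set $\{u_0-u_*>r\}$, $0<r<m$.
\item Recentering at the exit times produces $u_*$-calibrated curves along which $u_0-u_*\equiv r$ for $t\geqslant0$, with $\omega$-limit points in $\mathcal A$ (Lemma \ref{gminA}). Hence $(0,m)\subset(u_0-u_*)(\mathcal A)$.
\end{itemize}

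Only at this point is $(\diamond)$ used, and your account of its role is wrong. Critical subsolutions need not be constant on static classes. For example, take $H(x,p)=\tfrac12(p-g(x))^2$ on $\mathbb S^1$ with $\int_0^1g=0$ and $g\not\equiv0$. Then $c_0=0$ and $h^\infty(x,y)=G(y)-G(x)$ with $G(x)=\int_0^xg$. So $\mathcal A=\mathbb S^1$ is a single static class and $(\diamond)$ holds, yet $G$ is a non-constant critical subsolution. The uniqueness-set property of $\mathcal A$ does not need $(\diamond)$ either. What is needed is that the difference of two critical subsolutions is $1$-Lipschitz for $d_H$. Then $u_0-u_*$ induces a $1$-Lipschitz function on $\mathcal Q$, whose image is $\mathcal H^1$-null under $(\diamond)$ and so cannot contain $(0,m)$.
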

The next important remark explains a way to modify the problem in order to force hypothesis {\rm (S)} to hold for general Hamiltonians and functions $a$. 
\begin{rem}\label{rem:shift}
As shown in Sections \ref{ex1} and \ref{ex2} below, hypothesis {\rm (S)}
may hold for \eqref{E} but Section \ref{sec:4.3} shows a rather simple example for which it does not. We also record a useful variant of Theorem \ref{thm1}. For $A>0$, consider
the shifted equation
\begin{equation}\label{EA}\tag{$E_{\lambda,A}$}
  \lambda a(x)u(x)+H\bigl(x,Du(x)\bigr)-A\lambda=c_0
  \quad\text{in }M.
\end{equation}
Let $q$ be any viscosity solution of \eqref{E0}. If
\[
A>\max_{x\in M}a(x)q(x),
\]
then $q$ is a strict subsolution of \eqref{EA} for every $\lambda>0$, since
\[
\lambda a(x)q(x)+H(x,Dq)-A\lambda
\leqslant
c_0-\lambda\left(A-\max_M aq\right)
<c_0.
\]
In particular,
\[
A>\|a\|_\infty\|q\|_\infty
\]
is a sufficient condition.

The proof of Theorem \ref{thm1} applies verbatim to \eqref{EA} for $A$ fixed. In this
case, the constraint in the definition of $\mathcal S$ becomes
\[
\int_{TM}a(x)w(x)\,d\tilde\mu\leqslant A,
\]
and the representation formula becomes
\[
u_0^A(x)
=
\inf_{\tilde\mu\in\widetilde{\mathfrak M}_1}
\frac{
\int_{TM}h^\infty(y,x)a(y)\,d\tilde\mu(y,v)+A
}{
\int_{TM}a(y)\,d\tilde\mu(y,v)
}.
\]
Example~3 below shows that, although sufficiently large $A$ always yields
the required strict-subsolution condition for \eqref{EA}, this condition
may fail when $A=0$ or when $A>0$ is too small.
\end{rem}

Notice that such a function $a$ satisfying hypothesis (A) always exists. For instance, one may take
\[
a(x):=\dist(x,\mathcal A\setminus\mathcal A_1)-\dist(x,\mathcal A_1).
\]
Indeed,
\[
a\geqslant\delta\quad\text{on }\mathcal A_1,
\qquad
a\leqslant-\delta\quad\text{on }\mathcal A\setminus\mathcal A_1.
\]
Since the projected Mather set is contained in $\mathcal A$,
condition {\rm (A)} follows.

Here we also note that $\widetilde{\mathfrak M}_1\neq \emptyset$. In fact, by \cite[Section 4]{gen}, for $x\in\mathcal A_1$, there is a curve $\gamma:\mathbb R\to M$ with $\gamma(0)=x$ and $\gamma(\mathbb R)\subset\mathcal A$ such that for every solution $v_0$ of \eqref{E0}, $\gamma$ is $v_0$-calibrated. Since $\gamma$ is continuous, $\gamma(\mathbb R)\subset\mathcal A_1$. We define the probability measure $\tilde\mu_\gamma$ distributed on $(\gamma,\dot\gamma)$ similarly to \eqref{muz}, and the corresponding weak limit $\tilde\mu_*$. Similar to Lemma \ref{mu*M}, one can show that $\tilde\mu_*\in\widetilde{\mathfrak M}$, which is supported on $\mathcal A_1$ since $\gamma(\mathbb R)\subset\mathcal A_1$.

\medskip
Let $\mathcal C\subsetneq \mathcal A$ be a static class. The elementary solution of \eqref{E0} associated with $\mathcal C$ is, up to a constant, $h^\infty(x_0,x)$ for any $x_0\in \mathcal C$; see Corollary \ref{h-h=c} below. When $\mathcal C$ is isolated in $\mathcal A$, we have
\begin{cor}\label{cor1}
Assume \emph{(H1)}, \emph{(H2)}, and \emph{($\diamond$)}. Let $\mathcal C\subsetneq\mathcal A$ be a static class satisfying
\[\dist(\mathcal C,\mathcal A\setminus\mathcal C)\geqslant\delta>0.\]
Define
\[\widetilde{\mathfrak M}_{\mathcal C}:=\{\tilde\mu\in\widetilde{\mathfrak M}\mid \supp(\mu)\subset\mathcal C\}.\]
Let $a\in C(M)$ satisfy $a(x)>0$ on $\mathcal C$ and
\[\int_{TM} a(x)\, d\tilde\nu<0\text{ for all }\tilde\nu\in\widetilde{\mathfrak M}\text{ with }\supp(\nu)\cap \mathcal C=\emptyset.\]
Assume moreover that hypothesis {\rm (S)} of Theorem \ref{thm1} holds. Then the maximal solution $u_\lambda$ of \eqref{E} converges uniformly as $\lambda\to0^+$ to
\[h^\infty(x_0,x)+C_{\mathcal C},\]
where $x_0\in\mathcal C$ is arbitrary and
\[C_{\mathcal C}=\inf_{\tilde\mu\in \widetilde{\mathfrak M}_{\mathcal C}}
\frac{\int_{TM} a(y)h^\infty(y,x_0)\,d\tilde\mu(y,v)}
{\int_{TM} a(y)\,d\tilde\mu(y,v)}.\]
\end{cor}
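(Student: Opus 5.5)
The corollary will follow from Theorem \ref{thm1} applied with $\mathcal A_1:=\mathcal C$. The hypotheses match: $\mathcal C$ is a nonempty proper subset of $\mathcal A$ with $\dist(\mathcal C,\mathcal A\setminus\mathcal C)\geqslant\delta$, condition (A) is assumed verbatim, and (S) is assumed. Theorem \ref{thm1} therefore gives uniform convergence of the maximal solution $u_\lambda$ to
\[
u_0(x)=\inf_{\tilde\mu\in\widetilde{\mathfrak M}_{\mathcal C}}\frac{\int_{TM} h^\infty(y,x)a(y)\,d\tilde\mu(y,v)}{\int_{TM}a(y)\,d\tilde\mu(y,v)}.
\]
It remains to identify this expression with $h^\infty(x_0,x)+C_{\mathcal C}$.

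The key step is to show that, for $y$ in the static class $\mathcal C$ and any $x_0\in\mathcal C$,
\[
h^\infty(y,x)=h^\infty(y,x_0)+h^\infty(x_0,x)\quad\text{for all }x\in M.
\]
This is exactly what I expect Corollary \ref{h-h=c} (the elementary solutions $h^\infty(y,\cdot)$ for $y\in\mathcal C$ differ by constants) to give. I would also check it directly.
\begin{itemize}
\item The triangle inequality for the Peierls barrier gives $h^\infty(y,x)\leqslant h^\infty(y,x_0)+h^\infty(x_0,x)$.
\item In the other direction, $h^\infty(x_0,x)\leqslant h^\infty(x_0,y)+h^\infty(y,x)$.
\item Since $y\sim x_0$, we have $d_H(y,x_0)=0$, so $h^\infty(x_0,y)=-h^\infty(y,x_0)$.
\end{itemize}
Combining the last two items gives the reverse inequality, hence equality.

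With this identity, for each $\tilde\mu\in\widetilde{\mathfrak M}_{\mathcal C}$ (so $\supp(\mu)\subset\mathcal C$) I can split the integrand:
\[
\int h^\infty(y,x)a(y)\,d\tilde\mu=\int a(y)h^\infty(y,x_0)\,d\tilde\mu+h^\infty(x_0,x)\int a\,d\tilde\mu .
\]
The denominator $\int a\,d\tilde\mu$ is strictly positive, because $a>0$ on $\mathcal C$, $\mathcal C$ is compact (closed in $\mathcal A$, using isolation), and $\mu$ is a probability measure supported there. Dividing yields
\[
\frac{\int h^\infty(y,x)a\,d\tilde\mu}{\int a\,d\tilde\mu}=h^\infty(x_0,x)+\frac{\int a(y)h^\infty(y,x_0)\,d\tilde\mu}{\int a\,d\tilde\mu}.
\]
The first term on the right does not depend on $\tilde\mu$. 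Taking the infimum over $\widetilde{\mathfrak M}_{\mathcal C}$ then gives $u_0(x)=h^\infty(x_0,x)+C_{\mathcal C}$.

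The infimum $C_{\mathcal C}$ is finite, since $\widetilde{\mathfrak M}_{\mathcal C}\neq\emptyset$ (as noted after Theorem \ref{thm1}) and the quotients are bounded. The quotients are bounded because $h^\infty$ is continuous on the compact set $\mathcal C\times\mathcal C$ and the denominators satisfy $\int a\,d\tilde\mu\geqslant\min_{\mathcal C}a>0$. The value of $C_{\mathcal C}$ is independent of the choice of $x_0$ by the same identity. The only delicate point is the additivity identity for $h^\infty$ on a static class, and that is the step I would verify carefully against Corollary \ref{h-h=c}.
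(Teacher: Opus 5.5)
Your proof is correct and is essentially the paper's: apply Theorem \ref{thm1} with $\mathcal A_1=\mathcal C$, then use the identity $h^\infty(y,x)=h^\infty(y,x_0)+h^\infty(x_0,x)$ for $x_0,y\in\mathcal C$. The paper gets that identity from Lemma \ref{h=w} applied to the subsolution $-h^\infty(\cdot,x)$, which is the same triangle-inequality-plus-$d_H=0$ argument you give directly. One side remark of yours is wrong, though the proof does not use it: $C_{\mathcal C}$ itself is not independent of $x_0$, since replacing $x_0$ by $x_1\in\mathcal C$ shifts it by $h^\infty(x_0,x_1)$, which need not vanish; only the sum $h^\infty(x_0,\cdot)+C_{\mathcal C}$ is independent of the choice.
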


For the shifted equation \eqref{EA}, Remark \ref{rem:shift} shows
that the analogue of {\rm (S)} holds for all sufficiently
large $A$. Hence, up to an additive constant, the elementary solution associated with any isolated static class can be realized as a vanishing discount limit.

The aforementioned results are all direct consequences of a more general result that holds without hypothesis ($\diamond$) (only disconnectedness of $\mathcal A$ is needed). This result is obtained following closely an unpublished strategy of proof discovered by Fathi and Iturriaga \cite{FaIt}.

Along the way we also prove a general result about extremal Mather measures that is of independent interest; see Lemma \ref{w1w2}. The proof of this result was obtained with the help of ChatGPT.

\section{Preliminaries}\label{pre}

\subsection*{Viscosity solutions and weak KAM solutions}

In this subsection, we collect several properties of viscosity solutions and weak KAM solutions. We refer the reader to \cite{Barles,guide,Fat12} for further details.

\begin{defn}
Let $G:T^*M\times\mathbb R\to\mathbb R$ be a continuous function and $c\in\mathbb R$. A function $u\in C(M)$ is called a viscosity subsolution (resp. supersolution) of
\begin{equation}\label{hjj}
  G\big(x,Du(x),u(x)\big)=c\quad \textrm{in}\quad M
\end{equation}
if for each $\phi\in C^1(M)$, when $u-\phi$ attains its local maximum (resp. minimum) at $x$, then
\begin{equation*}
  G\big(x,D\phi(x),u(x)\big)\leqslant c,\quad (\textrm{resp}.\ \geqslant c).
\end{equation*}
A continuous function $u$ is called a viscosity solution of \eqref{hjj} if it is both a viscosity subsolution and a viscosity supersolution. If there is a constant $\bar c<c$ such that \[G\big(x,Du(x),u(x)\big)\leqslant \bar c\quad \textrm{in}\quad M\] holds in the viscosity sense, we call $u$ a strict subsolution of \eqref{hjj}.
\end{defn}

\begin{prop}\label{stability}
Let $(G_n)_n$ and $(u_n)_n$ be two sequences of functions in $C(T^*M\times\mathbb R)$ and $C(M)$ respectively. For each $n\in\mathbb N$, $u_n$ is a solution (resp. subsolution, supersolution) of \eqref{hjj} with $G=G_n$. If $u_n\to u$ uniformly and $G_n\to G$ locally uniformly as $n\to+\infty$, then $u$ is a solution (resp. subsolution, supersolution) of \eqref{hjj}.
\end{prop}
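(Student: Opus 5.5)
We prove Proposition \ref{stability} by the standard test-function argument, treating the subsolution case first; the supersolution case is symmetric and the solution case follows by combining both. The plan is to take $\phi\in C^1(M)$ such that $u-\phi$ has a local maximum at $x_0$, replace it by a strict one, transfer it to nearby local maxima of $u_n-\phi$, apply the subsolution inequality for $G_n$ there, and pass to the limit.

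\emph{Strict maximum.} Replace $\phi$ by $\psi(x)=\phi(x)+\rho(x)$, where $\rho\geqslant 0$ is a $C^1$ function vanishing to second order at $x_0$ and positive elsewhere in a neighborhood. Working in a chart, $\rho(x)=|x-x_0|^2$ cut off smoothly will do. Then $D\psi(x_0)=D\phi(x_0)$, and $u-\psi$ has a strict local maximum at $x_0$. Fix a small closed ball $\bar B=\bar B_r(x_0)$ on which $u-\psi<(u-\psi)(x_0)$ away from $x_0$. Let $x_n$ be a maximum point of $u_n-\psi$ on $\bar B$.

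\emph{Convergence of the maximizers.} Because $u_n\to u$ uniformly, every limit point $\bar x$ of $(x_n)$ maximizes $u-\psi$ on $\bar B$. Strictness forces $\bar x=x_0$, so $x_n\to x_0$. In particular $x_n$ lies in the interior of $\bar B$ for large $n$, hence is a local maximum of $u_n-\psi$.

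\emph{Passing to the limit.} The subsolution property gives $G_n\big(x_n,D\psi(x_n),u_n(x_n)\big)\leqslant c$. As $n\to\infty$ we have $(x_n,D\psi(x_n),u_n(x_n))\to(x_0,D\phi(x_0),u(x_0))$. Here $\psi\in C^1$ gives continuity of $D\psi$, and uniform convergence together with continuity of $u$ gives $u_n(x_n)\to u(x_0)$. These points stay in a compact subset of $T^*M\times\mathbb R$, where $G_n\to G$ uniformly; combined with the continuity of $G$, the limit of the inequality is $G(x_0,D\phi(x_0),u(x_0))\leqslant c$.

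The only step needing care is the strict-maximum and maximizer-convergence argument. On a manifold one must build the perturbation $\rho$ in a local chart, and must check that the uniform convergence $G_n\to G$ is applied on a compact set that contains all the points $(x_n,D\psi(x_n),u_n(x_n))$; both are routine.
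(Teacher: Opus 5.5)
Your proof is correct. It is the standard stability argument: you perturb the test function to get a strict local maximum, show that the maximizers $x_n$ of $u_n-\psi$ on a small closed ball converge to $x_0$, and pass to the limit using uniform convergence of $G_n$ on a compact set. The paper does not prove this proposition itself but quotes it as a standard fact from the viscosity-solution references cited at the start of Section 2, which rely on exactly this argument.
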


\begin{prop}\label{supsub}
If the pointwise supremum $u$ of a family of subsolutions of \eqref{hjj} is finite and continuous, then $u$ is a subsolution of \eqref{hjj}.
\end{prop}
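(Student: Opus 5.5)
The statement to prove is Proposition \ref{supsub}: a finite, continuous pointwise supremum of subsolutions of \eqref{hjj} is a subsolution. This is a standard fact from viscosity theory, and the proof will use only the definition and continuity of $G$. Throughout, let $u=\sup_{\alpha}u_\alpha$, where each $u_\alpha$ is a subsolution, and assume $u$ is finite and continuous.

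\textbf{Step 1: make the maximum strict.} Fix $\phi\in C^1(M)$ such that $u-\phi$ has a local maximum at $x_0$. Replace $\phi$ by $\phi+d(\cdot,x_0)^2$, smoothed near $x_0$ in a chart, or simply by $\phi(x)+|x-x_0|^2$ in local coordinates. The maximum then becomes strict on a closed ball $\bar B=\bar B_r(x_0)$, while $D\phi(x_0)$ is unchanged. So we may assume
\[
u(x)-\phi(x)<u(x_0)-\phi(x_0)\quad\text{for } x\in\bar B\setminus\{x_0\}.
\]

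\textbf{Step 2: approximate by single subsolutions.} For $n\in\mathbb N$, choose an index $\alpha_n$ such that $u_{\alpha_n}(x_0)>u(x_0)-1/n$. Let $x_n$ be a maximum point of $u_{\alpha_n}-\phi$ on $\bar B$; it exists because $u_{\alpha_n}$ is continuous.

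\textbf{Step 3: the maximum points converge to $x_0$.} We have the chain
\[
u(x_n)-\phi(x_n)\geqslant u_{\alpha_n}(x_n)-\phi(x_n)\geqslant u_{\alpha_n}(x_0)-\phi(x_0)>u(x_0)-\phi(x_0)-\frac1n .
\]
Let $\bar x$ be any accumulation point of $(x_n)$. Continuity of $u$ gives $u(\bar x)-\phi(\bar x)\geqslant u(x_0)-\phi(x_0)$. Strictness of the maximum then forces $\bar x=x_0$, so $x_n\to x_0$.

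The chain also yields $u_{\alpha_n}(x_n)\to u(x_0)$. Indeed, $u_{\alpha_n}(x_n)\leqslant u(x_n)\to u(x_0)$ from above. From below, $u_{\alpha_n}(x_n)\geqslant \phi(x_n)-\phi(x_0)+u(x_0)-1/n\to u(x_0)$.

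\textbf{Step 4: apply the subsolution property and pass to the limit.} For large $n$, $x_n$ lies in the interior of $B$, so it is a local maximum of $u_{\alpha_n}-\phi$. The subsolution property of $u_{\alpha_n}$ gives
\[
G\big(x_n,D\phi(x_n),u_{\alpha_n}(x_n)\big)\leqslant c .
\]
Letting $n\to\infty$ and using continuity of $G$ and $\phi\in C^1$, we obtain $G(x_0,D\phi(x_0),u(x_0))\leqslant c$. This is the subsolution inequality for $u$ at $x_0$.

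\textbf{Main obstacle.} The one genuinely delicate point is Step 3, namely the convergence of the $u$-argument $u_{\alpha_n}(x_n)\to u(x_0)$. Since $G$ depends on $u$ (and is non-monotone here), this convergence is needed, and the two-sided estimate in Step 3 supplies it. Continuity of $u$ is exactly what makes that estimate work. The strict-subsolution version follows identically with $c$ replaced by $\bar c$.
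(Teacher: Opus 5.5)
Your proof is correct and is the standard argument: strict-maximum perturbation, choice of $u_{\alpha_n}$ with $u_{\alpha_n}(x_0)>u(x_0)-1/n$, and the two-sided estimate giving $x_n\to x_0$ and $u_{\alpha_n}(x_n)\to u(x_0)$, which is needed because $G$ depends on $u$. The paper gives no proof of its own and cites the standard viscosity-solution references, where this is the usual proof; the assumed continuity of $u$ lets you skip the upper semicontinuous envelope used in the general version.
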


Now we denote by $(x,p,u)$ a point in $T^*M\times\mathbb R$. Assume that $G(x,p,u)$ is continuous, and is convex in $p$ for each $(x,u)\in M\times\mathbb R$.
\begin{prop}\label{ue}
Let $w:M\to\mathbb R$ be a Lipschitz continuous function verifying \[G\big(x,Dw(x),w(x)\big)\leqslant c\] for almost every $x\in M$. Then for every $\varepsilon>0$, there is $w_\varepsilon\in C^\infty(M)$ such that \[\|w-w_\varepsilon\|_\infty\leqslant \varepsilon,\quad G\big(x,Dw_\varepsilon(x),w_\varepsilon(x)\big)\leqslant c+\varepsilon\quad \forall x\in M.\]
\end{prop}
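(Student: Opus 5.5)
The plan is to work in local coordinates and combine mollification with the convexity of $G$ in $p$; this is the standard Barles/Fathi--Siconolfi smoothing argument, made global by a partition of unity.

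\textbf{Local mollification.} Cover $M$ by finitely many charts $U_i$, with $\overline{V_i}\subset U_i$ also covering $M$. Choose a smooth partition of unity $\{\varphi_i\}$ subordinate to $\{V_i\}$. In each chart, set $w_{i,\eta}=w*\rho_\eta$, where $\rho_\eta$ is a standard mollifier. Since $w$ is Lipschitz with constant $K$, $Dw$ exists almost everywhere and $\|Dw\|\le K$. Moreover $Dw_{i,\eta}=(Dw)*\rho_\eta$.

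Jensen's inequality applies because $p\mapsto G(x,p,u)$ is convex. This gives
\[G\big(x,Dw_{i,\eta}(x),w(x)\big)\le \int G\big(x,Dw(y),w(x)\big)\rho_\eta(x-y)\,dy.\]
We then replace $x$ by $y$ and $w(x)$ by $w(y)$ inside the integrand. The error is controlled by the modulus of continuity of $G$ on the compact set $\{|p|\le K',\ |u|\le \|w\|_\infty+1\}$ together with $|x-y|\le\eta$. Hence
\[G\big(x,Dw_{i,\eta},w_{i,\eta}\big)\le c+o(1)\]
uniformly on $\overline{V_i}$. We also use that $w_{i,\eta}\to w$ uniformly and that the chart metrics are uniformly comparable.

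\textbf{Gluing.} Define $w_\eta=\sum_i\varphi_i w_{i,\eta}$. Then
\[Dw_\eta=\sum_i\varphi_i Dw_{i,\eta}+\sum_i D\varphi_i\,(w_{i,\eta}-w),\]
using $\sum_i D\varphi_i=0$. The second sum tends to $0$ uniformly as $\eta\to0$. By convexity in $p$, $G\big(x,\sum_i\varphi_i Dw_{i,\eta},\cdot\big)\le \sum_i\varphi_i\,G\big(x,Dw_{i,\eta},\cdot\big)$. The $u$-argument $w_\eta(x)$ differs from each $w_{i,\eta}(x)$ by $o(1)$, which uniform continuity of $G$ on compacts absorbs. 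The vanishing gradient error is absorbed the same way, since all gradients stay bounded.

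\textbf{Conclusion and main difficulty.} Choosing $\eta$ small gives $\|w-w_\eta\|_\infty\le\varepsilon$ and $G\big(x,Dw_\eta,w_\eta\big)\le c+\varepsilon$ everywhere. The main technical point is the Jensen step. The hypothesis holds only almost everywhere, so the inequality must be applied to the integral representation of $Dw_{i,\eta}$; this is legitimate because $Dw$ is bounded and measurable. The dependence of $G$ on $x$ and $u$ is handled only through uniform continuity on compact sets, which is why we need the a priori bound on $Dw$.
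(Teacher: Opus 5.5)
Your proposal is correct and matches the standard argument. The paper states this proposition without proof, referring to \cite{Barles,guide,Fat12}, where it is proved the same way: mollify in charts, apply Jensen's inequality to the convex map $p\mapsto G(x,p,u)$ together with uniform continuity of $G$ on compact sets, and glue with a partition of unity using $Dw_\eta=\sum_i\varphi_i Dw_{i,\eta}+\sum_i D\varphi_i\,(w_{i,\eta}-w)$.
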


Now we further assume that there is $\Theta>0$ such that
\[|G(x,p,u)-G(x,p,v)|\leqslant \Theta|u-v|,\quad \forall (x,p)\in T^*M,\ \forall u,v\in\mathbb R,\]
and \[\lim_{\|p\|_x\to+\infty}\inf_{x\in M}G(x,p,0)=+\infty,\]
then we can define the associated Lagrangian
\[L_G(x,v,u):=\sup_{p\in T^*_xM}\big(\langle p,v\rangle_x-G(x,p,u)\big).\]

\begin{defn}\label{bws}
A function $u\in C(M)$ is called a backward (resp. forward) weak KAM solution of \eqref{hjj} if
\begin{itemize}
\item [(1)] For each absolutely continuous curve $\gamma:[t',t]\rightarrow M$, we have
\begin{equation*}
  u\big(\gamma(t)\big)-u\big(\gamma(t')\big)\leqslant \int_{t'}^{t}\Big[L_G\Big(\gamma(s),\dot \gamma(s),u\big(\gamma(s)\big)\Big)+c\Big]\, ds.
\end{equation*}
The above condition is denoted by $u\prec L_G+c$.

\item [(2)] For each $x\in M$, there exists an absolutely continuous curve $\gamma_-:(-\infty,0]\rightarrow M$ (resp. $\gamma_+:[0,+\infty)\to M$) with $\gamma_\pm(0)=x$ such that
\begin{align*}
  &u(x)-u\big(\gamma_-(t)\big)=\int_t^0\Big[L_G\Big(\gamma_-(s),\dot \gamma_-(s),u\big(\gamma_-(s)\big)\Big)+c\Big]\, ds,\quad \forall t<0.
  \\ &\textrm{\Big(resp.}\ u\big(\gamma_+(t)\big)-u(x)=\int_0^t \Big[L_G\Big(\gamma_+(s),\dot \gamma_+(s),u\big(\gamma_+(s)\big)\Big)+c\Big]\, ds,\quad \forall t>0\textrm{\Big).}
\end{align*}
The curves satisfying the above equality are called $(u,L_G,c)$-calibrated curves.
\end{itemize}
\end{defn}

According to \cite[Appendix D]{NWY} and \cite[Appendix A]{n2}, we have
\begin{prop}\label{soleq}
The following are equivalent:
\begin{itemize}
\item[(i)] $u\prec L_G+c$.
\item[(ii)] $u(x)$ is a viscosity subsolution of \eqref{hjj}.
\item[(iii)] $u(x)$ is Lipschitz continuous and $G\big(x,Du(x),u(x)\big)\leqslant c$ holds almost everywhere.
\end{itemize}
The following are equivalent:
\begin{itemize}
\item[(i)] $u(x)$ is a viscosity solution of \eqref{hjj}.
\item[(ii)] $u(x)$ is a backward weak KAM solution of \eqref{hjj}.
\end{itemize}
\end{prop}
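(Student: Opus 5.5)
The plan is to prove the two lists of equivalences separately. For the first list I would go around the cycle (iii)$\Rightarrow$(i)$\Rightarrow$(ii)$\Rightarrow$(iii). Throughout I use two standing facts. First, the Lipschitz bound $|G(x,p,u)-G(x,p,v)|\leqslant\Theta|u-v|$ passes to the Lagrangian: $|L_G(x,v,u)-L_G(x,v,u')|\leqslant\Theta|u-u'|$. Second, since $G$ is convex and continuous in $p$, Fenchel duality gives $G(x,p,u)=\sup_v\big(\langle p,v\rangle_x-L_G(x,v,u)\big)$.

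For (iii)$\Rightarrow$(i), I apply Proposition \ref{ue} to get a smooth $w_\varepsilon$ with $\|u-w_\varepsilon\|_\infty\leqslant\varepsilon$ and $G(x,Dw_\varepsilon,w_\varepsilon)\leqslant c+\varepsilon$ everywhere. Along any absolutely continuous curve $\gamma$, the Fenchel inequality gives
\[
\frac{d}{ds}w_\varepsilon(\gamma)\leqslant L_G\big(\gamma,\dot\gamma,w_\varepsilon(\gamma)\big)+c+\varepsilon\leqslant L_G\big(\gamma,\dot\gamma,u(\gamma)\big)+c+(1+\Theta)\varepsilon .
\]
Integrating over $[t',t]$ and letting $\varepsilon\to0$ yields $u\prec L_G+c$.

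For (i)$\Rightarrow$(ii), suppose $u-\phi$ has a local maximum at $x$, and fix $v\in T_xM$. I test (i) on the curve $\gamma(s)=\exp_x(sv)$ for $s\in[-t,0]$. This gives $\phi(x)-\phi(\gamma(-t))\leqslant u(x)-u(\gamma(-t))\leqslant\int_{-t}^0\big(L_G+c\big)\,ds$. Dividing by $t$ and letting $t\to0^+$ (using continuity of $L_G$ and of $u$) gives $\langle D\phi(x),v\rangle\leqslant L_G(x,v,u(x))+c$. Taking the supremum over $v$ gives $G(x,D\phi(x),u(x))\leqslant c$.

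For (ii)$\Rightarrow$(iii), note that $u$ is bounded on the compact manifold $M$, so $G(x,p,u(x))\geqslant G(x,p,0)-\Theta\|u\|_\infty$ is coercive in $p$ uniformly in $x$. The standard argument then shows that a continuous viscosity subsolution of a coercive equation is Lipschitz. At every point of differentiability, which is almost every point by Rademacher, the test-function inequality gives $G(x,Du,u)\leqslant c$. Alternatively one can get Lipschitz directly from (i), by bounding $L_G$ on unit-speed geodesics by a constant depending only on $\|u\|_\infty$.

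For the second list, the direction weak KAM $\Rightarrow$ viscosity solution goes as follows. The subsolution property is exactly condition (1) together with the first list. For the supersolution property, let $u-\phi$ have a minimum at $x$ and let $\gamma_-$ be the calibrated curve at $x$. This gives $\phi(x)-\phi(\gamma_-(t))\geqslant\int_t^0\big(L_G+c\big)\,ds$. Superlinearity of $L_G$ combined with the Lipschitz bound on $u$ shows that the difference quotients $(x-\gamma_-(t))/|t|$ stay bounded. Extracting a limit $v$ and using convexity of $L_G$ in $v$ (Jensen) gives $\langle D\phi(x),v\rangle\geqslant L_G(x,v,u(x))+c$, hence $G(x,D\phi(x),u(x))\geqslant c$.

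The converse, viscosity solution $\Rightarrow$ backward weak KAM solution, is the step I expect to be the main obstacle, because $L_G$ depends on $u$ itself. Here I would use the implicit (Herglotz-type) backward Lax--Oleinik semigroup $T^-_t$ for the evolution equation $w_t+G(x,Dw,w)=c$, built as in \cite{NWY}. The argument has three parts. First, the comparison principle for the evolution problem, which holds thanks to the $\Theta$-Lipschitz dependence, shows that a viscosity solution $u$ is a fixed point: $T^-_tu=u$ for all $t$. Second, for each $t$, a minimizing curve of the implicit variational formula for $T^-_tu(x)$ exists, by Tonelli-type compactness from superlinearity, and calibrates $u$ on $[-t,0]$. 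Third, a priori Lipschitz bounds on these minimizers, uniform in $t$, let me take $t=n\to\infty$ and apply Arzel\`a--Ascoli with a diagonal extraction. This produces $\gamma_-:(-\infty,0]\to M$, and the calibration identity passes to the limit by lower semicontinuity of the action together with the reverse inequality from (1).
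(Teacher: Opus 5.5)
The paper gives no proof of this proposition; it quotes it from \cite[Appendix D]{NWY} and \cite[Appendix A]{n2}. Your outline is a correct reconstruction of the standard arguments. However, the step you single out as the main obstacle is easier than you make it.

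Once $u$ is fixed, put $\hat G(x,p):=G(x,p,u(x))$. It is continuous, convex in $p$, and coercive, since $\hat G\geqslant G(\cdot,\cdot,0)-\Theta\|u\|_\infty$; you already use exactly this in (ii)$\Rightarrow$(iii). Its Lagrangian is $\hat L(x,v)=L_G(x,v,u(x))$. Each condition in the statement sees $u$ only through this frozen Hamiltonian:
\begin{itemize}
\item the viscosity inequalities for \eqref{hjj} only involve $G(x,D\phi(x),u(x))=\hat G(x,D\phi(x))$;
\item condition (iii) only involves $\hat G(x,Du(x))$;
\item conditions (1)--(2) of Definition \ref{bws} only involve $L_G(\gamma,\dot\gamma,u(\gamma))=\hat L(\gamma,\dot\gamma)$.
\end{itemize}
So both lists are literally the classical equivalences for the $u$-independent Hamiltonian $\hat G$. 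In particular, for your step (a) it suffices that $u$ is a fixed point of the ordinary Lax--Oleinik semigroup of $\hat L+c$, which follows from comparison for $w_t+\hat G(x,Dw)=c$. Your steps (b)--(c) then go through unchanged.

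This bypasses the implicit (Herglotz-type) semigroup, and with it the nontrivial theorem that it solves the contact Cauchy problem. It also uses $\Theta$ only to transfer coercivity to $\hat G$. Your route additionally shows that $u$ is fixed by the implicit semigroup, but that is not needed here.

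Two details should be tightened.

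\textbf{Continuity of $L_G$ in (i)$\Rightarrow$(ii).} You invoke continuity of $L_G$, but under the standing assumption (coercivity only) $L_G$ may take the value $+\infty$ and be merely lower semicontinuous. An example is $G=k(x)\|p\|_x$ with $k>0$ continuous. Then the limit $t\to0^+$ along $\exp_x(sv)$ can fail for $v$ on the boundary of $\{L_G(x,\cdot,u(x))<+\infty\}$. This is harmless for $G=\lambda a(x)u+H(x,p)$ with $H$ superlinear, as in the paper. In general, restrict to $v$ in the interior of that set, where $L_G$ is jointly continuous. This suffices, because the supremum in $G=\sup_v\big(\langle p,v\rangle-L_G\big)$ may be taken over those $v$.

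\textbf{The Jensen step in the supersolution check.} This step requires replacing $L_G\big(\gamma_-(s),\dot\gamma_-(s),u(\gamma_-(s))\big)$ by $L_G\big(x,\dot\gamma_-(s),u(x)\big)$. That needs an a.e.\ bound on $\dot\gamma_-$ (obtained as in Lemma \ref{gameq}) together with continuity of $L_G$; bounded difference quotients at $s=0$ are not enough. It is simpler to use
$L_G(\gamma_-,\dot\gamma_-,u(\gamma_-))\geqslant\langle D\phi(\gamma_-),\dot\gamma_-\rangle-G\big(\gamma_-,D\phi(\gamma_-),u(\gamma_-)\big)$.
This gives $\frac{1}{|t|}\int_t^0G\big(\gamma_-(s),D\phi(\gamma_-(s)),u(\gamma_-(s))\big)\,ds\geqslant c$, and hence $G(x,D\phi(x),u(x))\geqslant c$ by continuity, with no velocity bound needed.
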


\subsection*{Aubry-Mather theory}

In what follows, we always assume that $H:T^*M\to\mathbb R$ is continuous and satisfies assumptions (H1) and (H2). We recall that $c_0$ denotes the critical value of $H$, and that $L:TM\to\mathbb R$ is the associated Lagrangian. We now collect several results from weak KAM theory; see \cite{Da4,gen,DZ10,FS}.

For $t>0$, let $h_t:M\times M\to\mathbb R$ be the minimal action function, which is defined as
\[h_t(x,y)=\inf_{\gamma}\int_0^t \big[L\big(\gamma(s),\dot\gamma(s)\big)+c_0\big]\, ds,\]
where the infimum is taken over all absolutely continuous curves $\gamma:[0,t]\to M$ satisfying $\gamma(0)=x$ and $\gamma(t)=y$. By Tonelli's theorem, the infimum is attained; see \cite{One}. The proof relies on the following lower semicontinuity result; see \cite[Theorem 3.5]{One}.
\begin{lem}\label{TM}
Let $J$ be a bounded interval of $\mathbb R$. Assume that $L(x,v)$ is lower semicontinuous, convex in $v$, and bounded from below. Then the integral functional
\begin{equation*}
  \mathcal L(\gamma):=\int_J L\big(\gamma(s),\dot \gamma(s)\big)\, ds
\end{equation*}
is sequentially weakly lower semicontinuous in $W^{1,1}(J,M)$, that is, if a sequence $(\gamma_n)_n$ converges weakly to $\gamma$ in $W^{1,1}(J,M)$, then
\[\mathcal L(\gamma)\leqslant\liminf_{n\to+\infty} \mathcal L(\gamma_n).\]
In particular, the above inequality applies if $(\gamma_n)_n$ converges uniformly to $\gamma$ and is equi-Lipschitz continuous.
\end{lem}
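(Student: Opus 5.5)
The statement to prove is Lemma \ref{TM}: weak lower semicontinuity of $\mathcal L$ on $W^{1,1}(J,M)$ when $L$ is lower semicontinuous, convex in $v$, and bounded below. I will first reduce to a local Euclidean setting. A weakly convergent sequence in $W^{1,1}$ converges uniformly, by the compact embedding in one dimension together with the Dunford--Pettis equi-integrability of $\dot\gamma_n$. So I will subdivide $J$ into finitely many subintervals on which $\gamma$, and $\gamma_n$ for $n$ large, stay inside a single coordinate chart. Since $L$ is bounded below, I may add a constant and assume $L\geqslant 0$. Then $\liminf$ is superadditive over the subintervals, and it suffices to treat one chart, with $\gamma_n,\gamma:J\to\R^d$, $\gamma_n\to\gamma$ uniformly and $\dot\gamma_n\rightharpoonup\dot\gamma$ weakly in $L^1$.

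The core step is the classical Tonelli/Ioffe argument, in two stages.

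\emph{Stage one: approximate $L$ from below.} Since $L\geqslant 0$ is lower semicontinuous and convex in $v$, I will write $L$ as the pointwise increasing limit of functions of the form $\ell_k(x,v)=\max_{1\leqslant j\leqslant k}\big(a_j(x)+\langle b_j(x),v\rangle\big)^+$, with $a_j,b_j$ continuous and bounded. To build these, I first take Lipschitz inf-convolutions in $(x,v)$, which increase to $L$ by lower semicontinuity. I then take affine minorants in $v$, using convexity, and make them continuous in $x$ by partition-of-unity arguments. By monotone convergence, it is then enough to prove
\[
\int_J \ell(\gamma,\dot\gamma)\,ds\leqslant\liminf_n\int_J\ell(\gamma_n,\dot\gamma_n)\,ds
\]
for each such $\ell$.

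\emph{Stage two: prove the inequality for a fixed $\ell$.} The integrand $\ell$ is a finite maximum of positive parts of affine-in-$v$ functions. I will use the duality
\[
\int_J\ell(\gamma,\dot\gamma)\,ds=\sup\sum_j\int_{E_j}\big(a_j(\gamma)+\langle b_j(\gamma),\dot\gamma\rangle\big)\,ds,
\]
where the supremum is taken over measurable partitions $\{E_j\}$ of $J$, with the option of an empty index that contributes zero. For a fixed partition, I split each term as
\[
\int_{E_j}\big(a_j(\gamma_n)+\langle b_j(\gamma_n),\dot\gamma_n\rangle\big)\,ds
=\int_{E_j}a_j(\gamma_n)\,ds+\int_{E_j}\langle b_j(\gamma),\dot\gamma_n\rangle\,ds+\int_{E_j}\langle b_j(\gamma_n)-b_j(\gamma),\dot\gamma_n\rangle\,ds.
\]
The first term converges by uniform convergence of $\gamma_n$. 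The second converges by weak $L^1$ convergence, because $b_j(\gamma)\mathbf 1_{E_j}\in L^\infty$. The third tends to zero, since $\|b_j(\gamma_n)-b_j(\gamma)\|_\infty\to0$ while $\|\dot\gamma_n\|_{L^1}$ stays bounded. Each such sum is at most $\int_J\ell(\gamma_n,\dot\gamma_n)\,ds$, so taking the supremum over partitions gives the inequality.

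The final assertion follows directly. If the $\gamma_n$ are equi-Lipschitz and converge uniformly, then $\dot\gamma_n$ is bounded in $L^\infty$. Every subsequence therefore has a further subsequence with $\dot\gamma_n$ converging weak-$*$, hence weakly in $L^1$. The limit must be $\dot\gamma$, because $\int\langle\dot\gamma_n,\varphi\rangle\,ds=-\int\langle\gamma_n,\dot\varphi\rangle\,ds$ for test functions $\varphi$. Applying the lemma along subsequences realizing the $\liminf$ concludes the proof.

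I expect the main obstacle to be the approximation in stage one. The difficulty is producing continuous coefficients $a_j,b_j$ while keeping the approximants monotone and below $L$ when $L$ is merely lower semicontinuous in $x$. I would first regularize by inf-convolution, $L_m(x,v)=\inf_{y,w}\{L(y,w)+m(|x-y|+|v-w|)\}$. This is Lipschitz, lies below $L$, and increases to $L$, but it loses convexity in $v$. I would restore convexity by taking the convex envelope in $v$, then check that this envelope still increases to $L$ and stays continuous in $x$.
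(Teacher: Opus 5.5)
Your architecture is the classical one and most of it is sound. The localization to charts with the reduction to $L\geqslant 0$ is fine. So is Stage two: an optimal measurable partition for $\gamma$, then the three-term splitting, for $\ell=\max_j(a_j+\langle b_j,v\rangle)^+$. The monotone-convergence step and the final assertion on equi-Lipschitz sequences are also correct. One small inaccuracy: $W^{1,1}(J)\hookrightarrow C(\bar J)$ is not a compact embedding. Uniform convergence of a weakly convergent sequence comes instead from Dunford--Pettis equi-integrability of $\dot\gamma_n$ (hence equicontinuity), plus pointwise convergence and Arzel\`a--Ascoli. The paper gives no proof of its own; it quotes the lemma from the literature (Theorem 3.5 of the cited book on one-dimensional variational problems).

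\textbf{The gap: your Stage one construction fails.} You rightly single out Stage one as the crux, but the convex envelope in $v$ of the inf-convolution $L_m$ need not increase to $L$ when $L$ is only bounded below. Here is a counterexample.
Take $d=1$ and $L(x,v)=\max\{0,1-|x|v\}$; on $\mathbb S^1$, use $|\sin\pi x|$ in place of $|x|$. This $L$ is continuous, convex in $v$, nonnegative, and $L(0,\cdot)\equiv 1$.
For $v>0$, choosing $y=1/v$ and $w=v$ in the inf-convolution gives $L_m(0,v)\leqslant m/v$. So $0\leqslant L_m(0,\cdot)\leqslant 1$ on $\mathbb R$, with infimum $0$.
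A convex function bounded above on $\mathbb R$ is constant. Hence $\mathrm{co}_vL_m(0,\cdot)\equiv 0$ for every $m$, which does not converge to $L(0,\cdot)\equiv 1$.
The mechanism is that the inf-convolution imports, at large $|v|$, the small values of $L(y,\cdot)$ at nearby base points, and convexification then spreads them everywhere.

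The alternative you sketch, spreading affine minorants of $L(x_0,\cdot)$ to nearby $x$ by a partition of unity, meets the same obstruction. The constant minorant $1-\varepsilon$ of $L(0,\cdot)$ is a minorant of $L(x,\cdot)$ for no $x\neq 0$. The slope must be tilted in an $x$-dependent way; here $(1-\varepsilon-|x|v)^+$ does the job.

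\textbf{How to repair it.} What Stage one actually requires is De Giorgi's approximation theorem. It states that a nonnegative lower semicontinuous integrand, convex in $v$, is a countable supremum of functions $(a_j(x)+\langle b_j(x),v\rangle)^+$ with $a_j,b_j$ continuous. This is a genuinely nontrivial ingredient, and it is what the cited textbook result packages. You should either quote it or prove it by a different route.

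Your construction can be rescued under a growth assumption. Suppose $L$ is superlinear uniformly in $x$, as in the paper's application.
\begin{enumerate}
\item Then $L_m(x,v)\geqslant \min(A,m)|v|-C_A$.
\item Given an affine minorant $\alpha+\langle\beta,v\rangle$ of $L(x,\cdot)$, choose $A>|\beta|$. A Dini argument on a large ball then shows that $L_m(x,\cdot)\geqslant \alpha+\langle\beta,v\rangle-\varepsilon$ for large $m$. Hence $\mathrm{co}_vL_m\uparrow L$.
\item Moreover $\mathrm{co}_vL_m$ is $m$-Lipschitz in $x$ uniformly in $v$. So a minorant $\alpha+\langle\beta,v\rangle$ at $x_0$ yields the global minorant $(\alpha-m|x-x_0|+\langle\beta,v\rangle)^+$, and continuous coefficients come for free.
\end{enumerate}
This only proves a weaker lemma than the one stated, which assumes nothing beyond a lower bound on $L$.
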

The Peierls barrier $h^\infty:M\times M\to\mathbb R$ is defined as
\begin{equation}\label{barr}
  h^\infty(x,y)=\liminf_{t\to+\infty}h_t(x,y).
\end{equation}
The projected Aubry set is defined as
\begin{equation}\label{pA}
  \mathcal A:=\{x\in M\mid h^\infty(x,x)=0\}.
\end{equation}

\begin{prop}\label{h>w}\cite[Proposition 3.6]{DZ10}.
The following properties hold
\begin{itemize}
\item[(i)] The Peierls barrier is finite-valued and Lipschitz continuous.
\item[(ii)] If $w$ is a subsolution of \eqref{E0}, then
\[h^\infty(x,y)\geqslant w(y)-w(x).\]
\item[(iii)] For each $x,y,z\in M$, the following triangle inequality holds
\[h^\infty(x,y)\leqslant h^\infty(x,z)+h^\infty(z,y).\]
\item[(iv)] The function $h^\infty(y,\cdot)$ gives a solution of \eqref{E0} for each $y\in M$. Similarly, the function $-h^\infty(\cdot,y)$ is a forward weak KAM solution and a viscosity subsolution of \eqref{E0} for each $y\in M$.
\end{itemize}
\end{prop}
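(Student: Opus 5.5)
We treat the four items in the order (ii), (iii), (i), (iv). This order matters because the finiteness in (i) uses (ii), and the fixed-point argument in (iv) uses the uniform Lipschitz bounds from (i). Throughout, we use the semigroup (concatenation) inequality
\[
h_{t+s}(x,y)\leqslant h_t(x,z)+h_s(z,y)\qquad\text{for all } t,s>0,\ x,y,z\in M,
\]
together with its sharp form $h_{t+s}(x,y)=\min_z\big(h_t(x,z)+h_s(z,y)\big)$. The sharp form follows from Tonelli's theorem (the minimizers exist by Lemma \ref{TM}) by splitting a minimizer at time $t$.

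\emph{(ii).} If $w$ is a subsolution of \eqref{E0}, Proposition \ref{soleq} gives $w\prec L+c_0$. Applied to a minimizer of $h_t(x,y)$, this yields $w(y)-w(x)\leqslant h_t(x,y)$ for every $t>0$. Taking $\liminf_{t\to+\infty}$ gives (ii).

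\emph{(iii).} Fix $t>0$. Applying the concatenation inequality with $s\to+\infty$ gives
\[
\liminf_{s\to+\infty}h_{t+s}(x,y)\leqslant h_t(x,z)+h^\infty(z,y).
\]
The left-hand side equals $h^\infty(x,y)$. We then let $t$ run along a sequence $t_n\to+\infty$ realizing $h^\infty(x,z)=\lim_n h_{t_n}(x,z)$, which gives (iii).

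\emph{(i).} There are three steps.
\begin{enumerate}
\item \emph{Uniform Lipschitz bound.} By (H2), $L$ is bounded on $\{\|v\|_x\leqslant 1\}$. Given a curve from $x$ to $y$ on $[0,t]$ with $t\geqslant 1$, we modify it on a time interval of length $d(x,x')\leqslant \operatorname{diam}M$: we reparametrize the initial piece and prepend a geodesic segment from $x'$ to $x$ of bounded speed, and do the same at the endpoint. This should give a constant $K$, independent of $t\geqslant1$, with
\[
|h_t(x,y)-h_t(x',y')|\leqslant K\big(d(x,x')+d(y,y')\big).
\]
The care needed here is in keeping the total time fixed while absorbing the cost of the inserted segments.
\item \emph{Finite lower bound.} By definition of the critical value, \eqref{E0} has a subsolution $w$. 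By (ii), $h^\infty(x,y)\geqslant w(y)-w(x)\geqslant -2\|w\|_\infty$.
\item \emph{Finite upper bound.} We take a solution $u$ of \eqref{E0}, which exists because $c_0$ is critical. By Proposition \ref{soleq}, $u$ is a backward weak KAM solution, so for each $x$ there is a calibrated curve $\gamma_-$ with $h_t(\gamma_-(-t),x)\leqslant u(x)-u(\gamma_-(-t))\leqslant 2\|u\|_\infty$. By compactness, choose $t_n\to+\infty$ with $\gamma_-(-t_n)\to z$. The Lipschitz bound then shows that $h_{t_n}(\cdot,\cdot)$ is bounded along a sequence $t_n\to\infty$, uniformly on $M\times M$, so $h^\infty<+\infty$.
\end{enumerate}
Once $h^\infty$ is finite, the uniform Lipschitz bound on $h_t$ passes to the $\liminf$. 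I expect step 1 to be the main technical obstacle.

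\emph{(iv).} Let $T_s$ denote the Lax--Oleinik semigroup,
\[
T_s f(x)=\min_z\big(f(z)+h_s(z,x)\big).
\]
We show that $h^\infty(y,\cdot)$ is a fixed point of every $T_s$, i.e.
\[
h^\infty(y,x)=\min_z\big(h^\infty(y,z)+h_s(z,x)\big).
\]
\begin{itemize}
\item The inequality ``$\leqslant$'' follows exactly as in (iii).
\item For ``$\geqslant$'', take $t_n\to+\infty$ with $h_{t_n+s}(y,x)\to h^\infty(y,x)$. By the sharp semigroup identity there are $z_n$ with $h_{t_n+s}(y,x)=h_{t_n}(y,z_n)+h_s(z_n,x)$. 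Pass to a subsequence with $z_n\to z$. The equi-Lipschitz property from (i) lets us replace $z_n$ by $z$ at a vanishing cost, so
\[
h^\infty(y,x)\geqslant \liminf_n h_{t_n}(y,z)+h_s(z,x)\geqslant h^\infty(y,z)+h_s(z,x).
\]
\end{itemize}
A fixed point $f$ of $T_s$ for all $s$ satisfies $f\prec L+c_0$. Moreover, choosing minimizers $z$ for $s=1,2,\dots$ and concatenating the corresponding minimizing curves produces a backward calibrated curve. Thus $f$ is a backward weak KAM solution, hence a viscosity solution by Proposition \ref{soleq}. The symmetric argument, using the forward semigroup $\max_z\big(f(z)-h_s(x,z)\big)$, shows that $-h^\infty(\cdot,y)$ is a forward weak KAM solution. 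That it is also a subsolution follows from (ii) or (iii): the triangle inequality gives $-h^\infty(x',y)+h^\infty(x,y)\leqslant h^\infty(x,x')\leqslant h_t(x,x')$, hence $-h^\infty(\cdot,y)\prec L+c_0$, and Proposition \ref{soleq} applies.
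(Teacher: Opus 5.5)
The paper does not prove this proposition; it quotes it from \cite{DZ10}. Your outline therefore has to stand on its own. Its architecture is the standard one, but two steps fail as written.

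\emph{The uniform Lipschitz bound on $h_t$, $t\geqslant1$ (Step 1 of (i)).} You only announce this bound, and the mechanism you sketch does not work for a merely continuous, convex, superlinear $L$. Keeping the total time $t$ fixed while inserting a geodesic forces you to speed up the original curve by a factor $\rho=t/(t-\tau)$. The extra cost $\int_0^t\big[\rho^{-1}L(\gamma,\rho\dot\gamma)-L(\gamma,\dot\gamma)\big]\,d\sigma$ is not bounded by $C(\rho-1)$ unless $\|\dot\gamma\|$ is bounded (take $L=e^{\|v\|}$). So you would need a speed bound on minimizers, uniform in $t\geqslant1$. For continuous $L$ there is no Euler--Lagrange flow and no energy conservation, so this is a genuine regularity theorem. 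Step 3 of (i) and the replacement of $z_n$ by $z$ in (iv) both rest on this bound, so they are unproved as well.

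The bound is unnecessary, however. Since $h^\infty$ is a $\liminf$ as $t\to+\infty$, you may \emph{lengthen} time instead of fixing it. With $\tau=d(x,x')$ and a unit-speed geodesic,
\[h_{t+\tau}(x',y)\leqslant h_\tau(x',x)+h_t(x,y)\leqslant A\,d(x,x')+h_t(x,y),\qquad A:=\max_{\|v\|_x\leqslant1}|L(x,v)+c_0|.\]
Taking $\liminf_t$ gives $h^\infty(x',y)\leqslant h^\infty(x,y)+A\,d(x,x')$, and likewise in the second variable. This gives Lipschitz continuity once $h^\infty$ is finite at one point. Step 3 follows from $h_{t_n+\tau_n}(z,x)\leqslant A\tau_n+2\|u\|_\infty$ with $\tau_n=d(z,\gamma_-(-t_n))$. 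In (iv) you get $h_{t_n}(y,z_n)\geqslant h_{t_n+\tau_n}(y,z)-A\tau_n$ with $\tau_n=d(z_n,z)$. Also in (iv), you use $\liminf_n h_s(z_n,x)\geqslant h_s(z,x)$ without comment. This is lower semicontinuity of $h_s$ for fixed $s$; it follows from Lemma \ref{TM} together with superlinearity, and should be stated.

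\emph{The subsolution property of $-h^\infty(\cdot,y)$.} Your chain uses $h^\infty(x,x')\leqslant h_t(x,x')$, which is false in general. For $x=x'\notin\mathcal A$ one has $h^\infty(x,x)>0$, while $h_t(x,x)\leqslant t\,(L(x,0)+c_0)\to0$ as $t\to0^+$. The inequality you actually need is $h^\infty(x,y)\leqslant h_t(x,x')+h^\infty(x',y)$ for every $t>0$. It follows by taking $\liminf_{s\to+\infty}$ in $h_{t+s}(x,y)\leqslant h_t(x,x')+h_s(x',y)$. It is exactly the easy half of the forward fixed-point identity you already established.

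With these two repairs, the rest of your argument goes through: (ii), (iii), the fixed-point identities, and the calibrated curves obtained by iterating the $s=1$ minimization from each newly chosen point.
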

\begin{prop}\label{prop AC}
If $u$ and $v$ are respectively a subsolution and a supersolution such that $u\leqslant  v$ on $\mathcal A$, then $u\leqslant  v$ on the whole of $M$. In particular, $\mathcal A$ is a uniqueness set for (\ref{E0}), meaning that if two solutions coincide on $\mathcal A$, then they are equal.
\end{prop}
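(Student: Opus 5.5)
We must prove Proposition \ref{prop AC}: if $u$ is a subsolution and $v$ a supersolution of \eqref{E0} with $u\leqslant v$ on $\mathcal A$, then $u\leqslant v$ on $M$. We argue through the weak KAM representation of supersolutions and the barrier inequality of Proposition \ref{h>w}.

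\textbf{Step 1: a backward calibrated curve for $v$.} Fix $x\in M$. We first need a curve $\gamma:(-\infty,0]\to M$ with $\gamma(0)=x$ and
\[
v(x)-v\big(\gamma(-t)\big)\geqslant h_t\big(\gamma(-t),x\big)\qquad\text{for all } t>0 .
\]
Our intended route is to compare $v$ with the Lax--Oleinik evolution. A continuous supersolution satisfies $T^-_t v\leqslant v$, where
\[
T^-_t v(x)=\min_y\big(v(y)+h_t(y,x)\big).
\]
This follows from the comparison principle for the evolutionary equation $\partial_t w+H(x,D_xw)=c_0$: the constant-in-time function $v$ is a supersolution of it. Taking minimizing points $y_t$ and minimizers of $h_t$, we obtain for each $t$ a curve on $[-t,0]$ ending at $x$ along which
\[
v(x)\geqslant v\big(\gamma(-s)\big)+\int_{-s}^0\big[L+c_0\big]\,ds .
\]
Superlinearity gives equi-Lipschitz bounds on these minimizers, so a diagonal extraction together with Lemma \ref{TM} yields a single curve on $(-\infty,0]$ with the displayed property for every $t$.

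\textbf{Step 2: a limit point in $\mathcal A$.} Choose $t_n\to+\infty$ with $\gamma(-t_n)\to z$. Set $y_n=\gamma(-t_n)$. For $m>n$, subadditivity of the action along $\gamma$ gives
\[
h_{t_m-t_n}(y_m,y_n)\leqslant v(y_n)-v(y_m)\quad\text{(using the inequality applied between the two times)}.
\]
Passing to the limit and using the Lipschitz continuity of $h_t$ for $t\geqslant1$, the liminf over $t_m-t_n\to\infty$ gives
\[
h^\infty(z,z)\leqslant 0 .
\]
Proposition \ref{h>w}(ii) with a constant subsolution only yields $h^\infty\geqslant$ a difference of values, so we instead use the fact that $h^\infty(z,z)\geqslant0$ always holds, because any critical subsolution $w$ gives $h^\infty(z,z)\geqslant w(z)-w(z)=0$. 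Hence $z\in\mathcal A$.

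\textbf{Step 3: conclusion.} Since $u$ is a subsolution, Proposition \ref{soleq} gives $u\prec L+c_0$. Hence
\[
u(x)-u\big(\gamma(-t)\big)\leqslant \int_{-t}^0\big[L+c_0\big]\leqslant v(x)-v\big(\gamma(-t)\big),
\]
that is, $u(x)-v(x)\leqslant (u-v)\big(\gamma(-t)\big)$. Along $t_n$ this tends to $(u-v)(z)\leqslant0$, so $u(x)\leqslant v(x)$. The uniqueness statement follows by applying the inequality in both directions to two solutions that agree on $\mathcal A$.

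\textbf{Main obstacle.} Step 1 is the delicate part, since $v$ is only a viscosity supersolution, not a weak KAM solution. We need both the inequality $T^-_tv\leqslant v$, which requires a comparison principle for the Cauchy problem with merely continuous $H$, and a compactness argument producing one infinite calibrating-type curve. If direct comparison is problematic, we would instead argue pointwise. We replace $v$ by
\[
\tilde v=\lim_{t\to\infty}\inf_{s\geqslant t} T^-_s v ,
\]
which still dominates $u$ wherever $v$ does. We would then establish that $\tilde v$ is a weak KAM solution, for which backward calibrated curves exist by Proposition \ref{soleq}.
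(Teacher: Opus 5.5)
\textbf{Step 1 does not produce the curve you need, and the problem goes beyond regularity.} Suppose $y_t$ realizes $T^-_tv(x)=v(y_t)+h_t(y_t,x)$ and $\gamma_t$ is a minimizer on $[-t,0]$. Then $v(x)\geqslant v(\gamma_t(-s))+\int_{-s}^0[L+c_0]$ holds only at $s=t$. For $0<s<t$ the semigroup property gives only
\[
v(x)\geqslant T^-_tv(x)=T^-_{t-s}v\big(\gamma_t(-s)\big)+\int_{-s}^0\big[L(\gamma_t,\dot\gamma_t)+c_0\big]\,d\tau ,
\]
which is weaker than your claim because $T^-_{t-s}v\leqslant v$. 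A diagonal limit $\gamma$ therefore satisfies only $v_\infty(x)\geqslant v_\infty(\gamma(-s))+\int_{-s}^0[L+c_0]$, where $v_\infty:=\lim_tT^-_tv$. So $\gamma$ is merely a backward $v_\infty$-calibrated curve, and this can violate your property.

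Here is a counterexample. Take $M=\mathbb S^1$ and $H(x,p)=p^2-V(x)$ with $V>0$ except $V(0)=0$. Then $c_0=0$, $\mathcal A=\{0\}$, $L+c_0=q^2/4+V$, and $h^\infty(0,\cdot)=\rho$ with $\rho(x)=\min\{\int_0^x\sqrt V,\int_x^1\sqrt V\}$. Let $v$ be the tent function with $v(0)=0$, linear up to a peak at a small $x_*$ and linear down afterwards, both slopes of modulus $\geqslant\sqrt{\max V}$.
\begin{itemize}
\item $v$ is a supersolution: the subdifferential is empty at the peak, and $p^2\geqslant V(0)=0$ at $0$.
\item $v\geqslant\rho$ and $v(0)=\rho(0)$, so $v_\infty=\rho$.
\item Let $x_c$ be the point where the two integrals defining $\rho$ agree, and take $x_*<x<x_c$. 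Along any backward $\rho$-calibrated curve from $x$, $\rho$ strictly decreases, so the curve immediately enters $(x_*,x)$.
\item On $(x_*,x)$ we have $v>v(x)$. Hence $v(x)-v(\gamma(-s))<0<\int_{-s}^0[L+c_0]$ for small $s>0$.
\end{itemize}

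Your fallback is circular. $T^-_sv$ at points of $\mathcal A$ involves values of $v$ off $\mathcal A$, so $u\leqslant\tilde v$ on $\mathcal A$ already presupposes $u\leqslant v$ on $M$. Step 2 has a related flaw: your property compares only $\gamma(0)$ with $\gamma(-t)$, so it does not give $h_{t_m-t_n}(y_m,y_n)\leqslant v(y_n)-v(y_m)$.

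\textbf{How this is usually proved, and how to repair your route.} The paper does not prove this proposition; it quotes it from weak KAM theory (Fathi--Siconolfi). The standard proof avoids curves for supersolutions altogether.
\begin{itemize}
\item Take a critical subsolution $w$ that is strict outside $\mathcal A$.
\item Given $\varepsilon>0$, choose a neighborhood $W$ of $\mathcal A$ with $u<v+\varepsilon$ on $\overline W$.
\item By convexity, $\theta u+(1-\theta)w$ is uniformly strict on $M\setminus W$, and it is $\varepsilon$-close to $u$ for $\theta$ near $1$.
\item Smooth it via Proposition \ref{ue} to get $\phi$. If $\phi-v$ had a positive maximum on $M\setminus W$, it would be attained off $\overline W$, and the supersolution test for $v$ there contradicts the strictness of $\phi$.
\end{itemize}
This gives $u\leqslant v+O(\varepsilon)$, and you let $\varepsilon\to0$.

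To keep your dynamical route, you need a genuine super-optimality principle.
\begin{itemize}
\item First you must justify $v\geqslant T^-_\delta v$, which is not automatic for merely continuous $H$ and non-Lipschitz $v$.
\item Then choose recursively $x_0=x$ and $x_{k+1}$ with $v(x_k)\geqslant v(x_{k+1})+h_\delta(x_{k+1},x_k)$, and concatenate minimizers.
\item The resulting curve satisfies the inequality between any two grid times, and its action on $[-T,0]$ is bounded.
\item Superlinearity then gives equi-absolute continuity, and Lemma \ref{TM} with the continuity of $v$ lets you send $\delta\to0$.
\end{itemize}
With that two-time inequality, your Step 2 (closing the loop as in Lemma \ref{gminA}) and your Step 3 go through.
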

Let $y\in\mathcal A$. The elementary solution associated with $y$ is given by $h^\infty(y,\cdot)$; see \cite[Section 4.2]{Ber}. By Proposition \ref{h>w} (ii), (iv), $h^\infty(y,\cdot)$ is the maximal subsolution $w$ of \eqref{E0} such that $w(y)=0$.

\begin{defn}
We say that a Borel probability measure $\tilde{\mu}$ on $TM$ is closed if
\begin{itemize}
\item [(1)] $\int_{TM}\|v\|_x\, d\tilde{\mu}(x,v)<+\infty$;
\item [(2)] for every function $f\in C^1(M)$, we have $\int_{TM}\langle Df,v\rangle_x\, d\tilde{\mu}(x,v)=0$.
\end{itemize}
\end{defn}

\begin{prop}\label{Mather}\cite[Theorem 5.7]{Da4}.
The following holds
\[\min_{\tilde{\mu}}\int_{TM}L(x,v)\, d\tilde{\mu}=-c_0,\]
where $\tilde{\mu}$ is taken among all closed measures on $TM$. Measures realizing the minimum are called Mather measures. We denote by $\widetilde{\mathfrak M}$ the set of all Mather measures, which is both convex and compact in the weak topology. The Mather set is defined as
\[\widetilde{\mathcal M}=\overline{\bigcup_{\tilde{\mu}\in\widetilde{\mathfrak M}}\supp(\tilde{\mu})}.\]
The projected Mather set is $\mathcal M=\pi(\widetilde{\mathcal M})$.
\end{prop}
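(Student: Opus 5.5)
The plan is to prove the two inequalities $\int L\,d\tilde\mu\geqslant -c_0$ for every closed measure and $\int L\,d\tilde\mu=-c_0$ for some closed measure, and then to read off convexity and compactness of $\widetilde{\mathfrak M}$ from the structure of the constraints.

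\emph{Lower bound.} Since $c_0$ is the critical value, \eqref{E0} admits a subsolution $w$. By Proposition \ref{soleq} it is Lipschitz and satisfies $H(x,Dw)\leqslant c_0$ almost everywhere. Proposition \ref{ue} (with $G=H$) then gives, for each $\varepsilon>0$, a function $w_\varepsilon\in C^\infty(M)$ with $H(x,Dw_\varepsilon(x))\leqslant c_0+\varepsilon$ for all $x$. The Fenchel inequality yields
\[
L(x,v)\geqslant \langle Dw_\varepsilon(x),v\rangle_x-H\big(x,Dw_\varepsilon(x)\big)\geqslant \langle Dw_\varepsilon(x),v\rangle_x-c_0-\varepsilon .
\]
We integrate this against a closed measure $\tilde\mu$. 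The linear term integrates to $0$ by closedness, and it is integrable by condition (1). This gives $\int L\,d\tilde\mu\geqslant -c_0-\varepsilon$, and letting $\varepsilon\to0$ gives the lower bound.

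\emph{Attainment.} Fix $x\in\mathcal A$; the projected Aubry set is nonempty in the standard theory, which we take as known. Choose $T_n\to+\infty$ with $h_{T_n}(x,x)\to h^\infty(x,x)=0$. Let $\gamma_n:[0,T_n]\to M$ be a minimizer for $h_{T_n}(x,x)$, and define the probability measures on $TM$
\[
\int f\,d\tilde\mu_n:=\frac1{T_n}\int_0^{T_n} f\big(\gamma_n(s),\dot\gamma_n(s)\big)\,ds .
\]
Each $\gamma_n$ is a closed loop, so $\int\langle Df,v\rangle\,d\tilde\mu_n=\frac{1}{T_n}\big(f(\gamma_n(T_n))-f(\gamma_n(0))\big)=0$ exactly. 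We also have
\[
\int L\,d\tilde\mu_n=\frac{h_{T_n}(x,x)}{T_n}-c_0\longrightarrow -c_0 .
\]
In particular $\int L\,d\tilde\mu_n$ is bounded. Since $L$ is superlinear and bounded below, this gives tightness of $(\tilde\mu_n)$ and uniform integrability of $\|v\|_x$. By Prokhorov, a subsequence converges weakly to a probability measure $\tilde\mu$. Uniform integrability lets us pass to the limit in $\int\langle Df,v\rangle\,d\tilde\mu_n=0$ and in condition (1), so $\tilde\mu$ is closed. Lower semicontinuity of $\tilde\nu\mapsto\int L\,d\tilde\nu$ under weak convergence (since $L$ is continuous and bounded below) gives $\int L\,d\tilde\mu\leqslant -c_0$. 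With the lower bound, the minimum equals $-c_0$ and is attained.

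\emph{Convexity and compactness.} Closedness consists of linear constraints and the condition $\int L\leqslant -c_0$ is convex, so $\widetilde{\mathfrak M}$ is convex. For compactness, every Mather measure satisfies $\int L\,d\tilde\mu=-c_0$. The same superlinearity argument as above gives tightness and uniform integrability of $\|v\|_x$, uniformly over $\widetilde{\mathfrak M}$. Hence weak limits of Mather measures remain closed, and by lower semicontinuity they satisfy $\int L\leqslant -c_0$, so they are Mather measures.

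The main obstacle I expect is passing to the limit in the closedness condition. The integrand $\langle Df,v\rangle$ is unbounded, so weak convergence alone does not suffice. This is where the uniform integrability of $\|v\|_x$ must be extracted carefully from the uniform bound on $\int L$ together with superlinearity. The estimate I would use is $\|v\|_x\leqslant \frac{1}{K}\big(L(x,v)+C_K\big)$ on $\{\|v\|_x\geqslant R_K\}$, with $K$ chosen large.
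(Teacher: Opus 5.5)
Your proof is correct. The paper gives no proof of this statement and simply imports it from \cite{Da4}, so there is no in-paper argument to match. Your route is a sound self-contained one, and it is worth comparing with the constructions the paper uses elsewhere.

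\textbf{Lower bound.} This step (smooth approximate subsolutions from Proposition \ref{ue}, Fenchel's inequality, integration against a closed measure) is exactly the mechanism of Step~1 of Lemma \ref{w1w2}.

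\textbf{Attainment.} The paper's analogous device is in Lemma \ref{mu*M} and in the remark showing $\widetilde{\mathfrak M}_1\neq\emptyset$: it averages along a backward calibrated curve of a solution. Those averages are only asymptotically closed (defect at most $2\|f\|_\infty/t$). On the other hand, the curves are equi-Lipschitz by Lemma \ref{gameq}, so the measures sit in a fixed compact subset of $TM$, and passing to the limit in $\int\langle Df,v\rangle$ and $\int L$ is immediate. Your averages over minimizing loops for $h_{T_n}(x,x)$ are exactly closed. However, you have no a priori velocity bound on the loops for a merely continuous $L$, so you must pay with uniform integrability from superlinearity. You identified this correctly, and it works.

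\textbf{Two small improvements.}

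First, you do not need $\mathcal A\neq\emptyset$. For every $x\in M$, finiteness of $h^\infty(x,x)$ (Proposition \ref{h>w}(i)) already gives $T_n\to+\infty$ with $h_{T_n}(x,x)$ bounded, hence $h_{T_n}(x,x)/T_n\to0$. Alternatively, averaging along a backward calibrated curve of a fixed solution of \eqref{E0}, i.e.\ Lemma \ref{mu*M} with $\lambda=0$, avoids the Peierls barrier entirely.

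Second, the uniform-integrability estimate is cleanest in the following form. Choose $R_K$ so that $L(x,v)\geqslant K\|v\|_x$ on $\{\|v\|_x\geqslant R_K\}$, so that no $C_K$ appears there. Then use $L\geqslant -B$ on $TM$ (with $B\geqslant0$) on the complement. This gives
\[
\sup_n\int_{\{\|v\|_x\geqslant R_K\}}\|v\|_x\,d\tilde\mu_n\leqslant\frac1K\Big(\sup_n\int_{TM}L\,d\tilde\mu_n+B\Big).
\]
This bound yields both tightness and convergence of $\int\langle Df,v\rangle\,d\tilde\mu_n$. Since $\int L\,d\tilde\mu=-c_0$ for every $\tilde\mu\in\widetilde{\mathfrak M}$, it holds uniformly over $\widetilde{\mathfrak M}$. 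That is exactly what your compactness argument needs, together with metrizability of the weak topology on probability measures on $TM$.
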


\begin{prop}\cite[Proposition 3.13]{Z}.
$\mathcal M\subset \mathcal A$.
\end{prop}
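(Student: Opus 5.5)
The plan is to show that every Mather measure gives zero mass to $M\setminus\mathcal A$. Once this is known, $\supp(\mu)\subset\mathcal A$ for every $\tilde\mu\in\widetilde{\mathfrak M}$, because $\mathcal A$ is closed: $h^\infty$ is continuous by Proposition \ref{h>w}(i), and $\mathcal A$ is the zero set of $x\mapsto h^\infty(x,x)$. Since $\supp(\tilde\mu)\subset\pi^{-1}(\supp\mu)$ and $\pi^{-1}(\mathcal A)$ is closed, the closure of the union of these supports lies in $\pi^{-1}(\mathcal A)$, so $\mathcal M\subset\mathcal A$. The tool is a critical subsolution that is strict away from $\mathcal A$, tested against the closedness of Mather measures.

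\emph{Step 1 (strict subsolution off $\mathcal A$).} Fix $y\notin\mathcal A$, so $h^\infty(y,y)>0$. Following Fathi--Siconolfi, I would construct a subsolution $w_y$ of \eqref{E0} that is strict in a neighbourhood $U_y$ of $y$, meaning $H(x,Dw_y)\leqslant c_0-\eta_y$ a.e.\ on $U_y$. The natural candidate is built from $h^\infty(y,\cdot)$, perturbed using the positive gap $h^\infty(y,y)>0$: the point $y$ is not reached by closed critical curves of zero action, so one has room to lower the Hamiltonian near $y$. Cover $M\setminus\mathcal A$ by countably many such $U_{y_k}$ and form $w=\sum_k 2^{-k}w_{y_k}$, normalized so the series converges uniformly with equi-Lipschitz terms. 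By convexity (H1) and Proposition \ref{soleq}(iii), $w$ is a subsolution of \eqref{E0}. Moreover, on any compact $K\subset M\setminus\mathcal A$ there is $\eta_K>0$ with $H(x,Dw)\leqslant c_0-\eta_K$ a.e.\ on $K$.

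\emph{Step 2 (regularization).} Apply a localized version of Proposition \ref{ue}, using convolution in charts with a partition of unity. For each $\varepsilon>0$ this gives $w_\varepsilon\in C^1(M)$ with $H(x,Dw_\varepsilon)\leqslant c_0+\varepsilon$ on $M$ and $H(x,Dw_\varepsilon)\leqslant c_0-\eta_K/2$ on $K$, for $\varepsilon$ small. This works because the mollification error is uniformly small on a neighbourhood of $K$ where the strict inequality already holds.

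\emph{Step 3 (integration).} By Fenchel's inequality, $L(x,v)\geqslant\langle Dw_\varepsilon(x),v\rangle_x-H(x,Dw_\varepsilon(x))$. Integrate this against a Mather measure $\tilde\mu$ and use closedness to kill $\int\langle Dw_\varepsilon,v\rangle\,d\tilde\mu$. Together with Proposition \ref{Mather}, this gives
\[
-c_0=\int_{TM}L\,d\tilde\mu\geqslant -c_0-\varepsilon+\Big(\varepsilon+\frac{\eta_K}{2}\Big)\mu(K).
\]
Letting $\varepsilon\to0$ yields $\mu(K)=0$. By inner regularity, $\mu(M\setminus\mathcal A)=0$, which is what we wanted.

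The main obstacle is Step 1: producing, only under continuity, (H1) and (H2), a subsolution that is strict near each non-Aubry point, with the strictness surviving the countable convex combination and the smoothing. I would first try the standard Fathi--Siconolfi argument. Its key step is to show that if every critical subsolution were non-strict at $y$, one could build, via a compactness argument with Lemma \ref{TM}, loops through $y$ of arbitrarily long duration and nearly zero action, forcing $h^\infty(y,y)=0$. That contradicts $y\notin\mathcal A$.
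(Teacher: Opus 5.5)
The paper gives no proof of its own here; it only quotes Zavidovique's Proposition 3.13. So I judge your proposal on its merits. Your architecture is the standard one: a critical subsolution strict on compact subsets of $M\setminus\mathcal A$, smoothed, then Fenchel's inequality integrated against a closed minimizing measure. Steps 2 and 3 are correct, and so is the passage from $\mu(M\setminus\mathcal A)=0$ to $\mathcal M\subset\mathcal A$.

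The gap is Step 1. It carries all the content, and you only gesture at it. The claim you state at the end (if no critical subsolution is strict at $y$, there are long loops at $y$ with almost zero action) is true, but you give no mechanism for it. The one concrete candidate you name is the wrong object. By Proposition \ref{h>w}(iv), $h^\infty(y,\cdot)$ is a viscosity solution on all of $M$, including at $y$. So if $\phi\in C^1$ satisfies $H(x,D\phi)<c_0$ on a ball around $y$, you cannot have $\phi+\delta>h^\infty(y,\cdot)$ somewhere inside the ball and $\phi+\delta<h^\infty(y,\cdot)$ on its boundary. At an interior maximum of $\phi-h^\infty(y,\cdot)$, the supersolution test would give $H\geqslant c_0$. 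Hence the usual local modification (a max with a strict local subsolution) produces nothing from $h^\infty(y,\cdot)$, whatever the value of $h^\infty(y,y)$.

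The missing idea is to use the Ma\~n\'e potential $S_y:=\inf_{t>0}h_t(y,\cdot)$ instead.

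\emph{Basic properties.} $S_y$ is a Lipschitz critical subsolution. It satisfies $S_y(y)=0$, because $0\leqslant h_t(y,y)\leqslant t\,(L(y,0)+c_0)$. So the positive gap you want to exploit is really $h^\infty(y,y)>S_y(y)$. Moreover, $S_y$ is a supersolution on $M\setminus\{y\}$. By superlinearity, near-optimal curves from $y$ to points at distance $\geqslant\rho>0$ have durations bounded below, and this gives a local dynamic programming principle there.

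\emph{The dichotomy is at $y$ alone.} Suppose $S_y$ were also a supersolution at $y$. Then it would be a solution on $M$. By Proposition \ref{soleq}, it would admit a calibrated curve $\gamma:(-\infty,0]\to M$ with $\gamma(0)=y$, so $\int_{-t}^0(L+c_0)\,ds=-S_y(\gamma(-t))$. Take a curve from $y$ to $\gamma(-t)$ of some duration $\tau_t$ and action at most $S_y(\gamma(-t))+1/t$, and concatenate it with $\gamma|_{[-t,0]}$. This gives $h_{\tau_t+t}(y,y)\leqslant 1/t$ with $\tau_t+t\geqslant t$, hence $h^\infty(y,y)=0$.

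\emph{Construction of $w_y$.} Therefore, for $y\notin\mathcal A$ there is a $C^1$ function $\phi$ with $\phi(y)=S_y(y)$ and $H(y,D\phi(y))<c_0$. After subtracting $d(\cdot,y)^2$, $S_y-\phi$ has a strict local minimum at $y$, and $H(x,D\phi(x))\leqslant c_0-\eta$ on some ball $B(y,r)$. Take $0<\delta<\min_{\partial B(y,r)}(S_y-\phi)$. Define $w_y:=\max(S_y,\phi+\delta)$ on $B(y,r)$ and $w_y:=S_y$ outside. This $w_y$ is a critical subsolution and coincides with $\phi+\delta$ near $y$, so it is strict there.

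That is your $w_y$. Beyond the existence of calibrated curves, no compactness argument via Lemma \ref{TM} is needed, and with this $w_y$ the rest of your proof goes through.
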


Let $\tilde{\mu}$ be a Mather measure. The associated projected Mather measure $\mu$ is defined by
\[\int_M f(x)\, d\mu(x)=\int_{TM}f\big(\pi(x,v)\big)\, d\tilde{\mu}(x,v),\quad \forall f\in C(M).\]
Throughout this paper, we will denote by $\tilde\mu$ a probability measure on $TM$, and denote by $\mu$ a probability measure defined on $M$.

\subsection*{Results for contact H-J equations}

Finally, we collect several results from \cite{n2} that will be used in the proof of Theorem \ref{thm1}. Assume that $a\in C(M)$ and that there exist two points $x_1,x_2\in M$ such that $a(x_1)>0$ and $a(x_2)<0$. Let $\varphi\in C(M)$ and $c\in\mathbb R$. We define the solution semigroup $T_t^-:C(M)\to C(M)$ by
\begin{equation}\label{T-}
  T^-_t\varphi(x)=\inf_{\gamma(t)=x} \left\{\varphi\big(\gamma(0)\big)+\int_0^t\Big[L\big(\gamma(\tau),\dot{\gamma}(\tau)\big)+c- a\big(\gamma(\tau)\big)T^-_\tau\varphi\big(\gamma(\tau)\big)\Big]\, d\tau\right\},
\end{equation}
where the infimum is taken among absolutely continuous curves $\gamma:[0,t]\rightarrow M$ with $\gamma(t)=x$. Define the corresponding forward semigroup as
\begin{equation}\label{T+}
  T^+_t\varphi(x)=\sup_{\gamma(0)=x} \left\{\varphi\big(\gamma(t)\big)-\int_0^t\Big[L\big(\gamma(\tau),\dot{\gamma}(\tau)\big)+c- a\big(\gamma(\tau)\big)T^+_{t-\tau}\varphi\big(\gamma(\tau)\big)\Big]\, d\tau\right\}.
\end{equation}
\begin{lem}\label{n11} \cite[Lemma 5.4]{n2}.
If there is a strict subsolution $u_0$ of
\begin{equation}\label{hjj0}
  a(x)u(x)+H\big(x,Du(x)\big)=c\quad \textrm{in}\quad M,
\end{equation}
then the following limit exists
\[u_-=\lim_{t\to+\infty}T^-_t u_0,\]
and $u_-$ is the maximal solution of \eqref{hjj0}. Also, the following limit exists
\[v_+=\lim_{t\to+\infty}T^+_tu_0,\]
and $v_+$ is the minimal forward weak KAM solution of \eqref{hjj0}. Moreover, $v_+<u_-$.
\end{lem}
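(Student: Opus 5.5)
The plan is to run the backward semigroup \eqref{T-} from the strict subsolution $u_0$, show that the orbit is monotone, bounded and equi-Lipschitz, and identify its limit. Maximality is then obtained by comparing with convex combinations of $u_0$ and an arbitrary solution. The forward statements are the mirror argument with \eqref{T+}.

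\emph{Monotonicity.} Since $u_0$ is a subsolution of \eqref{hjj0}, Proposition \ref{soleq} gives $u_0\prec L-a u_0+c$. By the comparison property of the implicit variational formula \eqref{T-}, this should give $T^-_t u_0\geqslant u_0$ for all $t\geqslant0$. The semigroup property together with monotonicity of $T^-_t$ with respect to initial data then makes $t\mapsto T^-_tu_0$ nondecreasing. Strictness of $u_0$ upgrades this to $T^-_tu_0>u_0$ for $t>0$, which I will need at the end.

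\emph{Upper bound.} Here I use that $a$ takes a positive value. Along the constant curve at $x_1$ (where $a(x_1)>0$), the function $U(t)=T^-_tu_0(x_1)$ satisfies
\[
U(t)\leqslant U(s)+\int_s^t\big[L(x_1,0)+c-a(x_1)U(\tau)\big]\,d\tau .
\]
A Gronwall/ODE comparison then bounds $U$ from above uniformly in $t$. Any point can be reached from $x_1$ in time $1$ by a geodesic, so combining this with the lower bound $T^-_tu_0\geqslant u_0$ bounds $T^-_tu_0$ uniformly on $M$.

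\emph{Compactness and limit.} With the orbit uniformly bounded, the usual superlinearity argument for contact Lagrangians that are Lipschitz in $u$ gives equi-Lipschitz bounds for $t\geqslant1$. Monotone convergence plus Arzel\`a--Ascoli then yields uniform convergence to some $u_-$. Passing to the limit in the semigroup identity shows $T^-_s u_-=u_-$. Hence $u_-$ is a backward weak KAM solution, and therefore a viscosity solution by Proposition \ref{soleq}.

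\emph{Maximality (expected main obstacle).} Comparison fails here, so maximality has to come from the convex structure. Since $(u,p)\mapsto a(x)u+H(x,p)$ is jointly convex, for any solution $w$ and $\theta\in(0,1)$ the function $w_\theta:=\theta w+(1-\theta)u_0$ is a \emph{strict} subsolution. By Proposition \ref{supsub}, so is $\psi_\theta:=\max(u_0,w_\theta)$. Running the argument above from $\psi_\theta$ gives a solution $u_\theta\geqslant\max(u_0,w_\theta)$. It remains to show $u_\theta=u_-$, i.e.\ that the limit does not depend on which strict subsolution $\geqslant u_0$ one starts from. For this I would use the forward semigroup.

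First, the symmetric argument with \eqref{T+}, using the point $x_2$ with $a(x_2)<0$ for the lower bound, gives $v_+=\lim_t T^+_tu_0$. This limit is a forward weak KAM solution lying below $u_0$. Second, I would show that for every strict subsolution $\phi\geqslant u_0$ one has $\lim_t T^-_t\phi=\lim_t T^-_t v_+$. The tool is the duality of the calibrated curves of $v_+$ and of the backward limit: along a backward calibrated curve of the limit, the term $a\,T^-_\tau\phi$ is controlled from below by the forward solution. Granting this, $u_-=u_\theta\geqslant w_\theta$, and letting $\theta\to1$ gives $u_-\geqslant w$.

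Finally, $v_+<u_-$ follows from $v_+\leqslant u_0$ and $u_0<T^-_tu_0\leqslant u_-$. The same scheme shows that $v_+$ is the minimal forward weak KAM solution.
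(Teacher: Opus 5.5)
\textbf{Maximality is not established, and the bridge you propose is false.} Most of the existence argument is sound: monotonicity of $t\mapsto T^-_tu_0$, the bound at $x_1$, the equi-Lipschitz estimates and $T^-_su_-=u_-$. One point needs repair: propagating the bound from $x_1$ along a geodesic requires a Gronwall step as in Lemma~\ref{bM}, because where $a<0$ the lower bound $T^-_tu_0\geqslant u_0$ does not control $-a\,T^-_\tau u_0$.

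The real gap is maximality, and mirrored, minimality of $v_+$. The equality $u_\theta=u_-$ \emph{is} the maximality statement, and you only grant it. Worse, the identity $\lim_tT^-_t\phi=\lim_tT^-_tv_+$ cannot hold. Take $\phi=u_0$: the claim becomes $T^-_tv_+\to u_-$.
\begin{itemize}
\item Fix a forward $v_+$-calibrated curve $\gamma_+$ and set $W(\tau):=T^-_\tau v_+(\gamma_+(\tau))-v_+(\gamma_+(\tau))$.
\item Subtract the calibration identity from the semigroup inequality $T^-_tv_+(\gamma_+(t))\leqslant T^-_sv_+(\gamma_+(s))+\int_s^t[L+c-a\,T^-_\tau v_+](\gamma_+)\,d\tau$. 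This gives $W(t)\leqslant W(s)-\int_s^t a(\gamma_+)W\,d\tau$ with $W(0)=0$, so Gronwall yields $W\leqslant 0$.
\item Hence $\|T^-_tv_+-u_-\|_\infty\geqslant\min_M(u_--v_+)>0$ for every $t$. This contradicts $v_+<u_-$, which is part of the statement you are proving.
\end{itemize}
This is exactly why Lemma~\ref{n12} requires $v_+<\varphi$ strictly. It also requires $\varphi\leqslant u_-$, which is precisely what is unknown for $\psi_\theta$.

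The mechanism you sketch also points the wrong way. Bounding $a\,T^-_\tau\phi$ from below along calibrated curves, as in Lemma~\ref{mu*>0}, yields lower bounds on $T^-_t\phi$. Maximality needs the upper bound $u_\theta\leqslant u_-$, and nothing in the proposal supplies it.

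\textbf{How to repair it.} Your convexity idea survives if you replace the identification of $\lim_tT^-_t\psi_\theta$ by a continuity argument in $\theta$.
\begin{itemize}
\item Uniform gap. Let $\phi$ be a strict subsolution with margin $\eta$. Then $\phi(x)+y(t)$, with $\dot y=\eta-\|a\|_\infty y$ and $y(0)=0$ (so $y\geqslant0$), is a subsolution of $\partial_tu+au+H(x,D_xu)=c$. Comparison for the Cauchy problem, or a Gronwall argument along minimizers of \eqref{T-}, gives $T^-_1\phi\geqslant\phi+y(1)$ with $y(1)>0$ uniform on $M$.
\item Continuity in $\theta$. Let $u_0$ have margin $\varepsilon$ and let $w$ be any subsolution. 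Put $\varphi_\theta:=\theta w+(1-\theta)u_0$, a strict subsolution with margin $(1-\theta)\varepsilon$, and $I:=\{\theta\in[0,1]:\varphi_\theta\leqslant u_-\}$.
\item $I$ is closed and contains $0$. If $\theta\in I$ with $\theta<1$, then $\varphi_\theta\leqslant T^-_1\varphi_\theta-y_\theta(1)\leqslant u_--y_\theta(1)$, so every $\theta'$ near $\theta$ also lies in $I$.
\item By connectedness $I=[0,1]$, so $w\leqslant u_-$.
\item Minimality. The mirrored argument uses $T^+_1\phi\leqslant\phi-y(1)$, obtained through $T^+_t\varphi=-\check T^-_t(-\varphi)$ for the reversed Hamiltonian $H(x,-p)-a(x)u$. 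It gives $w\geqslant v_+$ for every subsolution, hence minimality of $v_+$.
\end{itemize}

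For the record, the paper does not prove this lemma; it quotes it from \cite{n2}.
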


\begin{lem}\label{n12}\cite[Theorem 3]{n2}.
Assume that there exists a strict subsolution $u_0$ of \eqref{hjj0}. Let $u_-$ and $v_+$ denote the maximal viscosity solution and the minimal forward weak KAM solution of \eqref{hjj0}, respectively. If $\varphi\in C(M)$ satisfies $v_+<\varphi\leqslant u_-$, then $T_t^-\varphi$ converges uniformly to $u_-$ as $t\to+\infty$.
\end{lem}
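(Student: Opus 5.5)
The approach is to squeeze $T^-_t\varphi$ between two families that both converge to $u_-$. The upper family is constant, equal to $u_-$. The lower family comes from Lemma \ref{n11} applied to the strict subsolution $u_0$.

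\textbf{Upper bound.} First I would record two facts about the implicit semigroups \eqref{T-}, \eqref{T+}.
\begin{itemize}
\item[(a)] They are order preserving: $\varphi\leqslant\psi$ implies $T^\pm_t\varphi\leqslant T^\pm_t\psi$.
\item[(b)] A viscosity solution of \eqref{hjj0} is a fixed point of $T^-_t$.
\end{itemize}
For (a), I would argue by contradiction at the first time where the order fails. For $\tau$ below that time the integrand satisfies $-a\,T^-_\tau\varphi\geqslant -a\,T^-_\tau\psi$ wherever $a<0$. The bad region $a>0$ is controlled by a Gronwall-type estimate on $\max(T^-_\tau\varphi-T^-_\tau\psi,0)$, using that $a$ is bounded. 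This is the standard argument in \cite{n2}, which I would take for granted. For (b), I would use the backward weak KAM characterization in Proposition \ref{soleq}. Together, (a) and (b) give $T^-_t\varphi\leqslant T^-_tu_-=u_-$ for all $t\geqslant0$.

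\textbf{Lower bound.} By Lemma \ref{n11}, $T^+_su_0\to v_+$ uniformly as $s\to+\infty$. Since $v_+<\varphi$ on the compact set $M$, I fix $s$ large enough that $w_s:=T^+_su_0\leqslant\varphi$. Monotonicity then gives
\[T^-_{t+s}\varphi\geqslant T^-_{t+s}w_s=T^-_t\big(T^-_sT^+_su_0\big).\]
The key inequality I would prove is
\[T^-_sT^+_s\psi\geqslant\psi\quad\text{for }\psi=u_0.\]
Granting it, monotonicity yields $T^-_{t+s}\varphi\geqslant T^-_tu_0$, and the right-hand side converges uniformly to $u_-$ by Lemma \ref{n11}. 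Combined with the upper bound, this gives $T^-_t\varphi\to u_-$ uniformly.

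\textbf{The duality inequality.} This is the step I expect to be the main obstacle. To prove $T^-_sT^+_su_0\geqslant u_0$, fix $x$ and let $\gamma:[0,s]\to M$ be an arbitrary curve ending at $x$. Set $y=\gamma(0)$. Using $\gamma$ as a competitor in the supremum defining $T^+_su_0(y)$ gives
\[T^+_su_0(y)\geqslant u_0(x)-\int_0^s\big[L+c-a\,T^+_{s-\tau}u_0(\gamma(\tau))\big]\,d\tau .\]
One then wants to compare $T^+_{s-\tau}u_0(\gamma(\tau))$, which appears here, with $T^-_\tau(T^+_su_0)(\gamma(\tau))$, which appears in the action defining $T^-_s(T^+_su_0)(x)$. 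In the classical case ($a\equiv0$) the two action terms simply cancel. Here the implicit dependence forces a comparison along the curve.

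My first attempt would be a continuity/first-failure-time argument. Set
\[g(\tau):=T^-_\tau(T^+_su_0)(\gamma(\tau))-T^+_{s-\tau}u_0(\gamma(\tau)),\]
prove $g\geqslant0$ on $[0,s]$ via a Gronwall inequality, and conclude at $\tau=s$. If that fails, the fallback is to reduce to the known equivalent statement that $T^-_s\psi\geqslant\phi$ iff $\phi\leqslant T^+_s\psi$-type duality in \cite{n2}. Two further ingredients would be used along the way:
\begin{itemize}
\item the fact that $T^+_su_0\leqslant u_0$, because $u_0$ is a subsolution;
\item Proposition \ref{soleq}, which controls $u_0$ along curves.
\end{itemize}
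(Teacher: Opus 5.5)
The paper does not prove this lemma. It quotes \cite[Theorem 3]{n2} and only remarks that strict convexity is not used, so your proposal has to stand on its own. Its architecture is sound. You get $T^-_t\varphi\leqslant T^-_tu_-=u_-$ from monotonicity and the fixed-point property. Once $T^+_su_0\leqslant\varphi$, you get $T^-_{t+s}\varphi\geqslant T^-_t\big(T^-_sT^+_su_0\big)\geqslant T^-_tu_0\to u_-$. None of this uses strict convexity.

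\textbf{The gap.} The step you flag, $T^-_sT^+_su_0\geqslant u_0$, is not proved, and the mechanism you propose does not close. Put $W:=T^+_su_0$, $\ell:=L(\gamma,\dot\gamma)+c$, $A(\tau):=T^-_\tau W(\gamma(\tau))$ and $B(\tau):=T^+_{s-\tau}u_0(\gamma(\tau))$, so that $g=A-B$. For an \emph{arbitrary} curve $\gamma$ ending at $x$, the semigroup properties give only
\[
A(\tau_2)-A(\tau_1)\leqslant\int_{\tau_1}^{\tau_2}(\ell-aA)\,d\sigma,\qquad B(\tau_2)-B(\tau_1)\leqslant\int_{\tau_1}^{\tau_2}(\ell-aB)\,d\sigma.
\]
These two inequalities point in the same direction, so their difference says nothing about $g$ and no Gronwall estimate is available. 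Also, the inequality you display, from using $\gamma$ as a competitor on all of $[0,s]$, is not the one needed. You need its version on every $[\tau_1,\tau_2]$, which follows from $T^+_{s-\tau_1}=T^+_{\tau_2-\tau_1}\circ T^+_{s-\tau_2}$.

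\textbf{The fix.} Choose $\gamma$ to be a minimizer in \eqref{T-} for $T^-_sW(x)$. It exists by the usual Tonelli argument (cf.\ Lemma \ref{TM}); an $\varepsilon$-minimizer also works up to an $O(\varepsilon)$ error. Split its action at $\tau$ and use $T^-_s=T^-_{s-\tau}\circ T^-_\tau$. This forces equality in the first inequality above, so $g(\tau_2)-g(\tau_1)\geqslant-\int_{\tau_1}^{\tau_2}a\,g\,d\sigma$, with $g(0)=W(\gamma(0))-T^+_su_0(\gamma(0))=0$. Your first-failure-time Gronwall argument then gives $g\geqslant0$ on $[0,s]$. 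Since $g(s)=T^-_sW(x)-u_0(x)$ depends only on $x$, one such curve per $x$ suffices.

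\textbf{Further remarks.} The argument works with any $\psi\in C(M)$ in place of $u_0$. The ingredients $T^+_su_0\leqslant u_0$ and Proposition \ref{soleq} are therefore not needed. Your fallback is misstated: ``$T^-_s\psi\geqslant\phi$ iff $\phi\leqslant T^+_s\psi$'' is not a valid duality. The correct adjunction is $T^+_s\phi\leqslant\psi\iff\phi\leqslant T^-_s\psi$. Applied with $\psi=T^+_s\phi$, it gives your inequality, but establishing it for the implicit semigroups requires exactly the calibrated-curve comparison above. So invoking it does not avoid the missing step.
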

Here we note that in the proof of \cite[Theorem 3]{n2}, the strict convexity of $H$ is not needed when $v_+<\varphi\leqslant u_-$.

\section{Proof of Theorem \ref{thm1}}

We first recall the setting considered in \cite{DNYZ}, where $a(x)$ was assumed to satisfy
\[
  \int_{TM} a(x)\, d\tilde{\mu} > 0
\]
for all Mather measures $\tilde{\mu}$ associated with $H$. In the present paper, however, we assume the integral of $a(x)$ with respect to Mather measures $\tilde\nu\in\widetilde{\mathfrak M}$ with $\supp(\nu)\cap \mathcal A_1=\emptyset$ to be negative, which introduces substantial new difficulties.

The first difficulty concerns the existence of solutions to \eqref{E}. In the non-monotone setting this is not automatic, and this is precisely the role of hypothesis {\rm (S)}. Replacing $\lambda_0$ by $\min\{\lambda_0,1\}$ if necessary, we assume throughout the proof that $\lambda_0\leqslant1$ and consider $\lambda\in(0,\lambda_0)$.

To prove the convergence of the maximal solution of \eqref{E}, we first rely on the large-time behavior described in Lemma \ref{n12}. This implies that any Mather measure $\tilde\mu_\lambda$ associated with the maximal solution of \eqref{E} satisfies
\[\int_{TM} a(x)\, d\tilde{\mu}_\lambda \geqslant 0,\]
as shown in Lemma \ref{mu*>0}. However, this information alone is not sufficient to conclude convergence. Let $u_*$ be a limit point of $\{u_\lambda\}_{\lambda\in(0,\lambda_0)}$. Then by Lemma \ref{leqA},
\[\int_{TM}a(x)u_*(x)\, d\tilde\mu\leqslant 0,\quad \forall \tilde\mu\in\widetilde{\mathfrak M}.\]
Let $\tilde\mu_z$ be a weak limit of $\tilde\mu_\lambda$. Then by Lemma \ref{auge} we have
\[\int_{TM} a(x)u_*(x)\, d\tilde{\mu}_z \geqslant 0.\]
Combining the above three inequalities, we will show that, if $u_*\neq u_0$, the image of $\mathcal A$ under $u_0-u_*$ contains a nontrivial interval. By $(\diamond)$ we conclude the convergence of $u_\lambda$ as $\lambda \to 0^+$.

\medskip

For each $\lambda\in(0,\lambda_0)$, let $v_\lambda$ be a strict subsolution given by hypothesis {\rm (S)}. Let $T^\lambda_t$ and $T^{\lambda,+}_t$ be the semigroups defined in \eqref{T-} and \eqref{T+}, associated with
\[L_\lambda(x,v,u):=L(x,v)+c_0-\lambda a(x)u,\]
respectively. By Lemma \ref{n11}, the limit
\[u_\lambda=\lim_{t\to+\infty}T^\lambda_t v_\lambda\]
exists, and $u_\lambda$ is the maximal solution of \eqref{E}. Also, the limit
\[v^+_\lambda=\lim_{t\to+\infty}T^{\lambda,+}_t v_\lambda\]
exists, and $v^+_\lambda$ is the minimal forward weak KAM solution of \eqref{E}, with $v^+_\lambda<u_\lambda$. Moreover, Lemma \ref{n12} describes the following large-time behavior, which will play a central role in the forthcoming proof. We state it again with new notations:

\begin{lem}\label{stab}
If $\varphi\in C(M)$ satisfies $v^+_\lambda<\varphi\leqslant u_\lambda$, then $T^\lambda_t \varphi$ converges uniformly to $u_\lambda$ as $t\to+\infty$.
\end{lem}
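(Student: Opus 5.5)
This lemma is Lemma \ref{n12} rewritten in the present notation, so the plan is to check that its hypotheses hold for the equation \eqref{E} with $\lambda\in(0,\lambda_0)$ fixed. We take
\[
G(x,p,u):=\lambda a(x)u+H(x,p),\qquad c:=c_0 .
\]
Lemma \ref{n12} is stated for $a(x)u+H(x,Du)=c$. Replacing the coefficient $a$ there by $\lambda a$ changes nothing: $\lambda a$ is continuous, and the structural requirements on $G$ hold.

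\begin{itemize}
\item $G$ is Lipschitz in $u$ with constant $\Theta=\lambda\|a\|_\infty$.
\item $G$ is convex in $p$ by (H1).
\item $G$ is coercive in $p$ uniformly in $x$ by (H2).
\item The associated Lagrangian is exactly $L_G(x,v,u)=L(x,v)-\lambda a(x)u$. Hence the backward and forward semigroups of \eqref{T-} and \eqref{T+} built from $L_G+c_0$ are precisely $T^\lambda_t$ and $T^{\lambda,+}_t$ as defined from $L_\lambda$.
\end{itemize}

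Next, the sign condition in the standing assumptions of the subsection on contact equations asks for points where $a>0$ and where $a<0$. By (A), $a>0$ on $\mathcal A_1\neq\emptyset$. Also, some Mather measure $\tilde\nu$ is supported away from $\mathcal A_1$: take the construction described after Theorem \ref{thm1}, applied to a point of $\mathcal A\setminus\mathcal A_1$ instead of a point of $\mathcal A_1$. That measure $\tilde\nu$ satisfies $\int a\,d\tilde\nu<0$, so $a$ takes negative values somewhere. (If the cited result in fact did not need this sign condition, this step could simply be skipped.)

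Hypothesis {\rm (S)} gives the strict subsolution $v_\lambda$ required in Lemma \ref{n12}. Lemma \ref{n11} then identifies $u_\lambda=\lim_{t\to+\infty} T^\lambda_t v_\lambda$ as the maximal solution and $v^+_\lambda=\lim_{t\to+\infty} T^{\lambda,+}_t v_\lambda$ as the minimal forward weak KAM solution. These are exactly the objects $u_-$ and $v_+$ appearing in Lemma \ref{n12}.

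Finally, for any $\varphi\in C(M)$ with $v^+_\lambda<\varphi\leqslant u_\lambda$, Lemma \ref{n12} yields $T^\lambda_t\varphi\to u_\lambda$ uniformly as $t\to+\infty$. This uses the remark after that lemma that strict convexity of $H$ is not needed in this range of initial data, which matters because we only assume (H1).

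The only point needing care is this bookkeeping: confirming that the semigroups and extremal solutions in the cited lemmas coincide with the ones just defined here. No new estimate is required.
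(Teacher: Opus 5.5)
Your proposal is correct and matches the paper. The paper also treats this lemma as Lemma \ref{n12} restated with $\lambda a$ in place of $a$, the strict subsolution $v_\lambda$ from {\rm (S)}, and $u_\lambda$, $v^+_\lambda$ identified through Lemma \ref{n11}. Your check that $a$ changes sign, using a Mather measure supported in $\mathcal A\setminus\mathcal A_1$ together with (A), is a valid verification that the paper leaves implicit.
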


We first show that the family $\{u_\lambda\}_{\lambda\in(0,\lambda_0)}$ is uniformly bounded. Since $u_\lambda\geqslant v_\lambda\geqslant-C$, it suffices to establish a uniform upper bound for $u_\lambda$. Fix once and for all a viscosity solution $v_0$ of \eqref{E0}. As a first step, we prove that $u_\lambda$ is uniformly bounded from above on $\mathcal A_1$.
\begin{lem}
If $a(x)>0$ on $\mathcal A_1$, then $u_\lambda$ is bounded from above on $\mathcal A_1$ uniformly for $\lambda\in(0,\lambda_0)$.
\end{lem}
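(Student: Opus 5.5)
The plan is to run a Gronwall-type argument along static curves, which stay inside $\mathcal A_1$ where the discount coefficient is positive. First, $\mathcal A_1$ is closed: it is separated from $\mathcal A\setminus\mathcal A_1$ by distance $\delta$, and $\mathcal A$ is compact. Hence $\mathcal A_1$ is compact and
\[
\alpha:=\min_{\mathcal A_1}a>0 .
\]
Fix $x\in\mathcal A_1$. As recalled after Remark \ref{rem:shift} (via \cite[Section 4]{gen}), there is a curve $\gamma:\mathbb R\to\mathcal A_1$ with $\gamma(0)=x$ that is $v_0$-calibrated. Consequently, for all $s'<s$,
\[
\int_{s'}^{s}\big[L(\gamma,\dot\gamma)+c_0\big]\,d\tau=v_0(\gamma(s))-v_0(\gamma(s')).
\]
Set $K:=\|v_0\|_\infty$, $f(\tau):=u_\lambda(\gamma(\tau))$ and $g(\tau):=v_0(\gamma(\tau))$.

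Since $u_\lambda$ is a solution, hence a subsolution, of \eqref{E}, Proposition \ref{soleq} gives $u_\lambda\prec L_\lambda$. Along $\gamma$ this yields
\[
f(s)-f(s')\leqslant g(s)-g(s')-\lambda\int_{s'}^{s}a(\gamma)f\,d\tau .
\]
Next we linearize the discount term using only the lower bound $u_\lambda\geqslant v_\lambda\geqslant -C$ from (S). Where $f\geqslant0$ we have $af\geqslant\alpha f$. Where $f<0$ we have $af\geqslant -\|a\|_\infty C$ and $\alpha f\geqslant-\alpha C$. In both cases
\[
-a(\gamma)f\leqslant-\alpha f+B,\qquad B:=(\|a\|_\infty+\alpha)C .
\]
Put $h:=f-g$. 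Since $-\alpha f=-\alpha h-\alpha g\leqslant -\alpha h+\alpha K$, we get, for all $s'<s$,
\[
h(s)-h(s')\leqslant-\lambda\alpha\int_{s'}^{s}h\,d\tau+\lambda B'(s-s'),\qquad B':=B+\alpha K .
\]

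The key step is an integral Gronwall comparison for this inequality. The function $h$ is continuous (indeed Lipschitz for fixed $\lambda$), and the inequality holds on every subinterval. From this we deduce that $\tau\mapsto e^{\lambda\alpha\tau}\big(h(\tau)-B'/\alpha\big)$ is nonincreasing. One way is to show that its upper Dini derivative is $\leqslant0$; another is to compare with the ODE solution $\dot\eta=-\lambda\alpha\eta+\lambda B'$ on small intervals and patch. This gives, for every $t>0$,
\[
h(0)\leqslant e^{-\lambda\alpha t}h(-t)+\frac{B'}{\alpha}\big(1-e^{-\lambda\alpha t}\big).
\]
For fixed $\lambda$, $h(-t)\leqslant\max_M u_\lambda+K$ is bounded independently of $t$. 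Letting $t\to+\infty$ therefore gives $h(0)\leqslant B'/\alpha$, that is,
\[
u_\lambda(x)\leqslant K+\frac{(\|a\|_\infty+\alpha)C+\alpha K}{\alpha},
\]
a bound independent of $x\in\mathcal A_1$ and of $\lambda\in(0,\lambda_0)$.

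I expect the only delicate point to be the Gronwall step: passing rigorously from the integral inequality on all subintervals to the exponential comparison, since $f$ is merely continuous along $\gamma$. If the Dini-derivative argument is awkward, I would instead argue by contradiction. Suppose $h(0)>B'/\alpha+\varepsilon$, and let $s_0:=\inf\{s<0: h>B'/\alpha+\varepsilon \text{ on }[s,0]\}$. On $[s_0,0]$ the inequality gives $h(0)-h(s_0)\leqslant-\lambda\alpha\varepsilon|s_0|\leqslant 0$, i.e.\ $h$ decreases to the right, so $h(s_0)\geqslant h(0)$. If $s_0$ were finite, continuity would force $h(s_0)=B'/\alpha+\varepsilon<h(0)$, a contradiction. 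Hence $s_0=-\infty$, and then $h(0)-h(s)\leqslant -\lambda\alpha\varepsilon|s|$ for all $s<0$. As $s\to-\infty$ this forces $h(s)\to+\infty$, contradicting the boundedness of $h$ for fixed $\lambda$.
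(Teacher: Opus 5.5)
Your proof is correct, and the contradiction argument in your last paragraph closes the Gronwall step, so the Dini-derivative route is not needed. It is, however, a different route from the paper's.

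The paper uses a maximum-principle argument rather than an exponential decay estimate. It picks $x_0$ maximizing $u_\lambda-v_0$ over the compact set $\mathcal A_1$ and shows $u_\lambda(x_0)\leqslant 0$. If instead $u_\lambda(x_0)>0$, then $u_\lambda>0$ on a short backward piece $\gamma((-t_0,0])$ of the $v_0$-calibrated curve $\gamma\subset\mathcal A_1$ through $x_0$. There $-\lambda a u_\lambda<0$, so the subsolution inequality gives $(u_\lambda-v_0)(\gamma(-t))>(u_\lambda-v_0)(x_0)$, contradicting maximality. Hence $u_\lambda\leqslant 2\|v_0\|_\infty$ on $\mathcal A_1$.

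What each approach buys:
\begin{itemize}
\item The paper's argument is local in time and needs only pointwise positivity of $a$ on $\mathcal A_1$, not $\alpha=\min_{\mathcal A_1}a>0$. It never uses the lower bound $u_\lambda\geqslant -C$ from (S), and the bound depends only on $v_0$.
\item Your argument works pointwise at each $x$ without choosing a maximizer. The price is following the curve over infinite backward time. That is why you need the uniform rate $\lambda\alpha$ to rule out $s_0=-\infty$, and why you get the weaker bound $2K+C+\|a\|_\infty C/\alpha$.
\end{itemize}

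You can in fact drop (S) from your own argument. Use the threshold $K+\varepsilon$ instead of $B'/\alpha+\varepsilon$. Then on $[s_0,0]$ you have $f=h+g\geqslant\varepsilon>0$, so $-af\leqslant-\alpha f\leqslant-\alpha h+\alpha K$ directly. The same contradiction then gives $h(0)\leqslant K$, that is, $u_\lambda\leqslant 2K$ on $\mathcal A_1$, which matches the paper's bound.
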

\begin{proof}
We claim that if
\[\max_{x\in\mathcal A_1}\big(u_\lambda(x)-v_0(x)\big)=u_\lambda(x_0)-v_0(x_0),\]
then $u_\lambda(x_0)\leqslant0$. Otherwise, assume $u_\lambda(x_0)>0$. By \cite[Section 4]{gen}, there is a curve $\gamma:\mathbb R\to M$ satisfying $\gamma(0)=x_0$ and $\gamma(\mathbb R)\subset\mathcal A$ which is $v_0$-calibrated. Since $\gamma$ is continuous and $\dist(\mathcal A_1,\mathcal A\setminus\mathcal A_1)\geqslant\delta>0$, we have $\gamma(\mathbb R)\subset\mathcal A_1$. By continuity, for $t_0>0$ small enough,
\[u_\lambda\big(\gamma(t)\big)>0\qquad\text{for all }t\in(-t_0,0).\]
Since $a>0$ on $\mathcal A_1$, for $t\in(0,t_0)$ we have
\begin{align*}u_\lambda(x_0)-u_\lambda\big(\gamma(-t)\big)
&\leqslant\int_{-t}^0\Big[L\big(\gamma(s),\dot\gamma(s)\big)+c_0-\lambda a\big(\gamma(s)\big)u_\lambda\big(\gamma(s)\big)\Big]ds
\\&<\int_{-t}^0\big[L\big(\gamma(s),\dot\gamma(s)\big)+c_0\big]ds
=v_0(x_0)-v_0\big(\gamma(-t)\big),
\end{align*}
which contradicts the maximality of $x_0$ for $u_\lambda-v_0$.

Therefore, for all $x\in\mathcal A_1$,
\[u_\lambda(x)-v_0(x)\leqslant u_\lambda(x_0)-v_0(x_0)\leqslant-v_0(x_0),
\]
and hence
\[u_\lambda(x)\leqslant2\|v_0\|_\infty\qquad\text{on }\mathcal A_1.
\]
\end{proof}

Using the fact that $u_\lambda\prec L_\lambda$, we can show that $u_\lambda$ is bounded from above on the whole space $M$, once it is bounded from above at one point in $M$.

\begin{lem}\label{bM}
The family $\{u_\lambda\}_{\lambda\in (0,\lambda_0)}$ is bounded from above on the whole $M$.
\end{lem}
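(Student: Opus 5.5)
The plan is to propagate the bound $u_\lambda\leqslant 2\|v_0\|_\infty$ on $\mathcal A_1$ from the previous lemma to all of $M$. We use the subsolution inequality $u_\lambda\prec L_\lambda$ along short curves of bounded speed, and control the nonlinear term $-\lambda a u_\lambda$ with a Gronwall-type argument. The two ingredients are the uniform lower bound $u_\lambda\geqslant v_\lambda\geqslant -C$ from (S) and $\lambda<\lambda_0\leqslant 1$.

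Fix $x_0\in\mathcal A_1$ and an arbitrary $x\in M$. Set $D=\operatorname{diam}(M)$ and take a geodesic $\gamma:[0,D]\to M$ of unit speed (reparametrized) with $\gamma(0)=x_0$ and $\gamma(t)=x$ for some $t\leqslant D$, extended constantly if needed. Then $\|\dot\gamma\|\leqslant 1$, so $L(\gamma,\dot\gamma)+c_0\leqslant K:=\max_{\|v\|\leqslant1}L+|c_0|$. For $s\in[0,t]$, applying Definition~\ref{bws}(1), equivalently Proposition~\ref{soleq}, to $u_\lambda$ on $[0,s]$ gives
\[
u_\lambda(\gamma(s))\leqslant u_\lambda(x_0)+Ks+\int_0^s\lambda |a(\gamma(\tau))|\,|u_\lambda(\gamma(\tau))|\,d\tau .
\]
We write $|u_\lambda|\leqslant u_\lambda^+ + C$, using $u_\lambda\geqslant -C$, and set $f(s)=u_\lambda(\gamma(s))^+ + C$. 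With $\lambda\leqslant1$ this yields
\[
f(s)\leqslant 2\|v_0\|_\infty+2C+Ks+\|a\|_\infty\int_0^s f(\tau)\,d\tau .
\]
Gronwall's inequality then gives $f(t)\leqslant (2\|v_0\|_\infty+2C+KD)e^{\|a\|_\infty D}$, a bound independent of $\lambda$ and of $x$. This proves the lemma.

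The only delicate point is that the integrand involves $u_\lambda$ itself with an indefinite-sign coefficient $a$, so no direct comparison applies. This is exactly why we need the lower bound from (S): it lets us replace $|u_\lambda|$ by a quantity that closes under Gronwall. Continuity of $u_\lambda$ makes $f$ continuous, so the Gronwall step is legitimate. The constant speed curve keeps the Lagrangian part bounded uniformly.
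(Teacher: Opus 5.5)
Your argument is correct and follows the same scheme as the paper's proof: a geodesic issued from a point of $\mathcal A_1$ (where the previous lemma bounds $u_\lambda$), the inequality $u_\lambda\prec L_\lambda$ along it, and Gronwall. The only difference is how you control the sign-indefinite term $-\lambda a u_\lambda$. You use the lower bound $u_\lambda\geqslant -C$ from {\rm (S)} to dominate $|u_\lambda|$ by $u_\lambda^++C$. The paper instead restricts to the last excursion $(\sigma,1]$ of the geodesic on which $u_\lambda>K_1$, where $-\lambda a(u_\lambda-K_1)\leqslant\lambda\|a\|_\infty(u_\lambda-K_1)$, so its argument needs no lower bound on $u_\lambda$ at all.
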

\begin{proof}
Fix an arbitrary point $x\in M$. Let $y\in \mathcal A_1$. We take a geodesic $\alpha:[0,1]\to M$ with constant speed and connecting $y$ and $x$. We denote by $K_1>0$ the upper bound of $u_\lambda$ on $\mathcal A_1$. If $u_\lambda(x)\leqslant K_1$, then we have obtained the upper bound of $u_\lambda$. Thus, we only need to consider the case $u_\lambda(x)>K_1$. Then by continuity, there exists $\sigma\in [0,1)$ such that $u_\lambda\big(\alpha(\sigma)\big)=K_1$ and $u_\lambda\big(\alpha(s)\big)>K_1$ for all $s\in (\sigma,1]$. For $s\in (\sigma,1]$, we have
\begin{equation*}
\begin{aligned}
  &u_\lambda\big(\alpha(s)\big)-u_\lambda\big(\alpha(\sigma)\big)
  \\ &\leqslant \int_\sigma^s \Big[L\big(\alpha(\tau),\dot \alpha(\tau)\big)+c_0-\lambda a\big(\alpha(\tau)\big)u_\lambda\big(\alpha(\tau)\big)\Big]\, d\tau
  \\ &=\int_\sigma^s \Big[L\big(\alpha(\tau),\dot \alpha(\tau)\big)+c_0-\lambda a\big(\alpha(\tau)\big)\big(u_\lambda\big(\alpha(\tau)\big)-K_1\big)-\lambda a\big(\alpha(\tau)\big)K_1\Big]\, d\tau
  \\&\leqslant \max_{\substack{x\in M\\ \|v\|_x\leqslant \textrm{diam}(M)}}|L(x,v)+c_0|+\|a\|_\infty K_1+\lambda\|a\|_\infty\int_\sigma^s \big(u_\lambda\big(\alpha(\tau)\big)-K_1\big)\, d\tau.
\end{aligned}
\end{equation*}
Using the Gronwall inequality, we get
\[u_\lambda\big(\alpha(s)\big)-K_1\leqslant \Big(\max_{\substack{x\in M\\ \|v\|_x\leqslant \textrm{diam}(M)}}|L(x,v)+c_0|+\|a\|_\infty K_1\Big)e^{\|a\|_\infty},\quad \forall\lambda\in(0,\lambda_0).\]
Taking $s=1$, we get a uniform upper bound of $\{u_\lambda\}_{\lambda\in (0,\lambda_0)}$.
\end{proof}

\begin{lem}\label{s3}
The family $\{u_\lambda\}_{\lambda\in (0,\lambda_0)}$ is uniformly bounded and equi-Lipschitz continuous.
\end{lem}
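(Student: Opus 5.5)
Uniform boundedness is essentially already in hand. Lemma \ref{bM} gives a uniform upper bound on $M$, and hypothesis {\rm (S)} together with the maximality of $u_\lambda$ gives the lower bound: $v_\lambda$ is a strict subsolution, and Lemma \ref{n11} produces $u_\lambda$ as the maximal solution lying above the starting strict subsolution, so $u_\lambda\geqslant v_\lambda\geqslant -C$. Hence $\|u_\lambda\|_\infty\leqslant K$ for some $K$ independent of $\lambda\in(0,\lambda_0)$.

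For the equi-Lipschitz property I would use the equivalence in Proposition \ref{soleq}. Since $u_\lambda$ is a viscosity subsolution of \eqref{E}, it is Lipschitz and satisfies
\[H\big(x,Du_\lambda(x)\big)\leqslant c_0-\lambda a(x)u_\lambda(x)\leqslant c_0+\lambda_0\|a\|_\infty K\leqslant c_0+\|a\|_\infty K\]
for almost every $x$, using $\lambda_0\leqslant 1$. By the superlinearity (H2), which implies coercivity uniformly in $x$, the sublevel set $\{(x,p): H(x,p)\leqslant c_0+\|a\|_\infty K\}$ is compact. So there is $\kappa>0$, independent of $\lambda$, with $\|Du_\lambda(x)\|_x\leqslant\kappa$ almost everywhere.

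It remains to pass from this almost-everywhere gradient bound to a uniform Lipschitz constant. On a closed Riemannian manifold, a Lipschitz function with $\|Du\|\leqslant\kappa$ a.e. is $\kappa$-Lipschitz for $d$. I would argue this either with a standard Fubini-type argument along geodesics, or more directly through the characterization $u_\lambda\prec L_\lambda$: for a constant-speed geodesic $\gamma$ from $y$ to $x$ of length $d(x,y)$, parametrized on $[0,d(x,y)]$ with unit speed,
\[u_\lambda(x)-u_\lambda(y)\leqslant \int_0^{d(x,y)}\big[L(\gamma,\dot\gamma)+c_0+\|a\|_\infty K\big]\,ds\leqslant C'\,d(x,y),\]
where $C'=\max_{\|v\|\leqslant1}|L|+|c_0|+\|a\|_\infty K$. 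Symmetrizing gives the equi-Lipschitz bound.

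There is no real obstacle here. The only care needed is to make sure the lower bound $u_\lambda\geqslant v_\lambda$ is justified, and this follows from the monotone construction $u_\lambda=\lim_t T^\lambda_t v_\lambda$ with $T^\lambda_t v_\lambda\geqslant v_\lambda$ for a subsolution, or simply from the maximality of $u_\lambda$ among solutions above subsolutions as given in Lemma \ref{n11}.
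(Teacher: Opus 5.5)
Your argument is correct and matches the paper's: uniform boundedness comes from Lemma \ref{bM} together with $u_\lambda\geqslant v_\lambda\geqslant -C$. The equi-Lipschitz bound is obtained, as in the paper, by integrating $u_\lambda\prec L_\lambda$ along a unit-speed minimizing geodesic; your coercivity and sublevel-set step is not needed.

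For the lower bound, rely on your first justification, namely $T^\lambda_t v_\lambda\geqslant v_\lambda$ together with $u_\lambda=\lim_{t\to+\infty}T^\lambda_t v_\lambda$. In this non-monotone setting, the fact that $u_\lambda$ is the maximal solution does not by itself force it to dominate an arbitrary subsolution.
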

\begin{proof}
We have already shown that $\{u_\lambda\}_{\lambda\in (0,\lambda_0)}$ is uniformly bounded. Now we are going to show that $\{u_\lambda\}_{\lambda\in (0,\lambda_0)}$ is equi-Lipschitz continuous. Fix $x,y\in M$. We take a geodesic $\alpha:[0,d(x,y)]\to M$ with constant speed connecting $x$ and $y$. It follows that
\begin{equation*}
\begin{aligned}
  u_\lambda(y)-u_\lambda(x)&\leqslant \int_0^{d(x,y)}\Big[L\big(\alpha(s),\dot{\alpha}(s)\big)+c_0-\lambda a\big(\alpha(s)\big) u_\lambda\big(\alpha(s)\big)\Big]\, ds
  \\ &\leqslant \Big(\max_{\substack{x\in M\\ \|v\|_x\leqslant 1}}|L(x,v)+c_0|+\|a\|_\infty\|u_\lambda\|_\infty\Big) d(x,y).
\end{aligned}
\end{equation*}
Exchanging $x$ and $y$, we can show that $\{u_\lambda\}_{\lambda\in (0,\lambda_0)}$ is equi-Lipschitz continuous.
\end{proof}

According to the Arzel\'a-Ascoli theorem and Lemma \ref{s3}, any sequence $(u_{\lambda_n})_n$ with $\lambda_n\to 0^+$ admits a subsequence which uniformly converges to a continuous function $u_*$. According to Proposition \ref{stability}, $u_*$ is a solution of \eqref{E0}.
\begin{lem}\label{leqA}
Let $u_*$ be a limit point of $\{u_\lambda\}_{\lambda\in(0,\lambda_0)}$. Then
\[\int_{TM}a(x)u_*(x)\,d\tilde\mu\leqslant0,
\qquad \forall\tilde\mu\in\widetilde{\mathfrak M}.\]
\end{lem}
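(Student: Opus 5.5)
The plan is to show, for each fixed $\lambda\in(0,\lambda_0)$, the stronger statement
\[
\int_{TM}a(x)u_\lambda(x)\,d\tilde\mu\leqslant 0\qquad\forall\tilde\mu\in\widetilde{\mathfrak M},
\]
and then pass to the limit along the subsequence $u_{\lambda_n}\to u_*$. This is the standard duality argument that tests a subsolution against a closed measure, as in the monotone vanishing discount theory. Only the subsolution property of $u_\lambda$ is used, not maximality.

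\textbf{Step 1: smooth approximation.} Fix $\lambda$ and set $G(x,p,u):=\lambda a(x)u+H(x,p)$. This $G$ is continuous and convex in $p$. Since $u_\lambda$ is a solution of \eqref{E}, Proposition \ref{soleq} shows that $u_\lambda$ is Lipschitz and satisfies $\lambda a(x)u_\lambda(x)+H(x,Du_\lambda(x))\leqslant c_0$ almost everywhere. Proposition \ref{ue} then gives, for every $\varepsilon>0$, a function $w_\varepsilon\in C^\infty(M)$ with $\|w_\varepsilon-u_\lambda\|_\infty\leqslant\varepsilon$ and
\[
\lambda a(x)w_\varepsilon(x)+H\big(x,Dw_\varepsilon(x)\big)\leqslant c_0+\varepsilon\qquad\forall x\in M.
\]

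\textbf{Step 2: integrate against a Mather measure.} By the Fenchel inequality, for every $(x,v)\in TM$,
\[
L(x,v)\geqslant\langle Dw_\varepsilon(x),v\rangle_x-H\big(x,Dw_\varepsilon(x)\big)\geqslant\langle Dw_\varepsilon(x),v\rangle_x+\lambda a(x)w_\varepsilon(x)-c_0-\varepsilon.
\]
Take $\tilde\mu\in\widetilde{\mathfrak M}$. Every term above is $\tilde\mu$-integrable, because $Dw_\varepsilon$ is bounded and $\int\|v\|_x\,d\tilde\mu<+\infty$. Since $\tilde\mu$ is closed, $\int\langle Dw_\varepsilon,v\rangle_x\,d\tilde\mu=0$. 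By Proposition \ref{Mather}, $\int L\,d\tilde\mu=-c_0$. Integrating the inequality therefore gives
\[
-c_0\geqslant\lambda\int_{TM}a\,w_\varepsilon\,d\tilde\mu-c_0-\varepsilon,
\qquad\text{that is,}\qquad
\lambda\int_{TM}a\,w_\varepsilon\,d\tilde\mu\leqslant\varepsilon.
\]

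\textbf{Step 3: remove $\varepsilon$, divide by $\lambda$, and pass to the limit.} As $\varepsilon\to0$, $w_\varepsilon\to u_\lambda$ uniformly, so $\lambda\int a u_\lambda\,d\tilde\mu\leqslant0$. Dividing by $\lambda>0$ gives $\int a u_\lambda\,d\tilde\mu\leqslant0$ for every $\lambda\in(0,\lambda_0)$ and every Mather measure. Along $\lambda_n\to0^+$ we have $u_{\lambda_n}\to u_*$ uniformly by Lemma \ref{s3}, and $a$ is bounded, so $\int a u_{\lambda_n}\,d\tilde\mu\to\int a u_*\,d\tilde\mu$. This yields the claim.

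\textbf{Main point of care.} The only delicate step is the order of limits. The $\varepsilon$-error must be sent to $0$ at fixed $\lambda$ \emph{before} dividing by $\lambda$; otherwise the term $\varepsilon/\lambda$ would not vanish. The other check is that Proposition \ref{ue} applies to the $u$-dependent $G$, which requires Lipschitz continuity of $u_\lambda$ together with the almost-everywhere inequality, both of which hold here.
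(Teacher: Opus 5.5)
Your proof is correct and follows essentially the same route as the paper: both establish $\int_{TM}\big(L-\lambda a u_\lambda\big)\,d\tilde\mu\geqslant -c_0$ for $\tilde\mu\in\widetilde{\mathfrak M}$, combine it with $\int_{TM}L\,d\tilde\mu=-c_0$, and then let $\lambda_n\to0^+$. The only difference is how that inequality is obtained. The paper notes that $c_0$ is the critical value of $H_\lambda(x,p):=H(x,p)+\lambda a(x)u_\lambda(x)$ and applies Proposition \ref{Mather} to $H_\lambda$. You instead prove the needed half of that duality directly, using smoothing (Proposition \ref{ue}), the Fenchel inequality and closedness; this makes explicit that only the subsolution property of $u_\lambda$ is used.
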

\begin{proof}
By Proposition \ref{Mather}, $\int_{TM}L(x,v)\,d\tilde\mu=-c_0$ for all $\tilde\mu\in\widetilde{\mathfrak M}$. We note that $c_0$ is also the critical value of the Hamiltonian
\[H_\lambda(x,p):=H(x,p)+\lambda a(x)u_\lambda(x),\]
since $u_\lambda$ solves $H_\lambda(x,Du_\lambda)=c_0$. Therefore
\begin{align*}
\int_{TM}\big(L(x,v)-\lambda a(x)u_\lambda(x)\big)\,d\tilde\mu
&=-c_0-\lambda\int_{TM}a(x)u_\lambda(x)\,d\tilde\mu
\\&\geqslant\min_{\tilde\nu}\int_{TM}\big(L(x,v)-\lambda a(x)u_\lambda(x)\big)\,d\tilde\nu
=-c_0,
\end{align*}
where the minimum is taken among all closed probability measures on $TM$. Hence
\[\int_{TM}a(x)u_\lambda(x)\,d\tilde\mu\leqslant0.\]
Letting $\lambda\to0^+$ along a convergent subsequence gives the conclusion.
\end{proof}

Fix $z\in M$. Let $\gamma^z_\lambda:(-\infty,0]\to M$ be a $u_\lambda$-calibrated curve satisfying $\gamma^z_\lambda(0)=z$.
\begin{lem}\label{gameq}
The family $\{\gamma^z_\lambda\}_{z\in M,\ \lambda\in (0,\lambda_0)}$ is equi-Lipschitz continuous.
\end{lem}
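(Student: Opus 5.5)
The plan is the standard argument: calibration gives a uniform bound on the Lagrangian along the curve, and superlinearity turns that into a uniform speed bound. By Lemma \ref{s3} there is $K>0$ with $\|u_\lambda\|_\infty\leqslant K$ and $\mathrm{Lip}(u_\lambda)\leqslant K$ for all $\lambda\in(0,\lambda_0)$. Since $\lambda_0\leqslant1$, the discount term satisfies $|\lambda a(x)u_\lambda(x)|\leqslant\|a\|_\infty K=:K'$.

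First we bound the action on short intervals. Fix $z$, $\lambda$, and set $\gamma=\gamma^z_\lambda$. For $t'<t\leqslant0$, calibration gives
\[\int_{t'}^{t}L\big(\gamma(s),\dot\gamma(s)\big)\,ds=u_\lambda(\gamma(t))-u_\lambda(\gamma(t'))+\int_{t'}^{t}\big[\lambda a u_\lambda-c_0\big]\,ds.\]
The right-hand side is at most $K\,d(\gamma(t),\gamma(t'))+(K'+|c_0|)(t-t')$, and $d(\gamma(t),\gamma(t'))\leqslant\int_{t'}^t\|\dot\gamma\|\,ds$.

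Next we apply superlinearity. By the superlinearity of $L$, there is $C_1$ with $L(x,v)\geqslant(K+1)\|v\|_x-C_1$. Substituting this into the bound above gives
\[\int_{t'}^t\|\dot\gamma(s)\|\,ds\leqslant (C_1+K'+|c_0|)(t-t').\]
So $\gamma$ is Lipschitz with constant $C_2:=C_1+K'+|c_0|$, which does not depend on $z$ or $\lambda$. This is a bound on the averaged speed over every interval, and that is exactly what the Lipschitz property requires.

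The main obstacle is that calibration only controls the integral of $L$, not $L$ pointwise. A pointwise speed bound would need the Euler–Lagrange flow, which we do not have, since $H$ is only continuous. The averaged estimate above avoids this, because it only uses $d(\gamma(t),\gamma(t'))\leqslant\int_{t'}^t\|\dot\gamma\|$. Its constant depends only on $K$, so the family $\{\gamma^z_\lambda\}_{z\in M,\ \lambda\in(0,\lambda_0)}$ is equi-Lipschitz. It is worth checking that the superlinearity of $L$ holds uniformly in $x$, which is recorded after (H1)--(H2).
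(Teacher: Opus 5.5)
Your proof is correct and is essentially the paper's argument: calibration, the uniform sup and Lipschitz bounds on $u_\lambda$ from Lemma \ref{s3}, superlinearity with slope $K+1$, and $d\leqslant\int\|\dot\gamma\|$. The only cosmetic difference is that you bound the length $\int_{t'}^t\|\dot\gamma\|\,ds$ instead of the distance directly. That length bound on every interval already gives $\|\dot\gamma\|\leqslant C_2$ a.e.\ by Lebesgue differentiation, so your closing claim that a pointwise speed bound would need the Euler--Lagrange flow is inaccurate, though it does not affect the proof.
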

\begin{proof}
By Lemma \ref{s3}, there are two constants $K,\bar \kappa>0$ independent of $\lambda$ such that $u_\lambda$ is bounded by $K$ and $u_\lambda$ is $\bar \kappa$-Lipschitz continuous. By the superlinearity of $L$, for each $T>0$, there is $C_T\in\mathbb R$ such that
\[L(x,v)+c_0\geqslant T\|v\|_x+C_T.\]
Thus, for $0\geqslant t>s$, we have
\begin{align*}
\bar \kappa d\big(\gamma^z_\lambda(t),\gamma^z_\lambda(s)\big)&\geqslant u_\lambda\big(\gamma^z_\lambda(t)\big)-u_\lambda\big(\gamma^z_\lambda(s)\big)
\\ &=\int_s^t \Big[L\big(\gamma^z_\lambda(\tau),\dot{\gamma}^z_\lambda(\tau)\big)+c_0-\lambda a\big(\gamma^z_\lambda(\tau)\big)u_\lambda\big(\gamma^z_\lambda(\tau)\big)\Big]\, d\tau
\\ &\geqslant \int_s^t \Big[(\bar \kappa+1)\|\dot{\gamma}^z_\lambda(\tau)\|_{\gamma^z_\lambda(\tau)}+C_{\bar \kappa+1}\Big]\, d\tau-\|a\|_\infty K(t-s)
\\ &\geqslant (\bar \kappa+1)d\big(\gamma^z_\lambda(t),\gamma^z_\lambda(s)\big)+(C_{\bar \kappa+1}-\|a\|_\infty K)(t-s),
\end{align*}
which implies that
\[d\big(\gamma^z_\lambda(t),\gamma^z_\lambda(s)\big)\leqslant (\|a\|_\infty K-C_{\bar \kappa+1})(t-s).\]
The proof is now complete.
\end{proof}

\begin{defn}\label{defmu}
Define measures $\tilde{\mu}^{z,\lambda}_{t}$ by
\begin{equation}\label{muz}
  \int_{TM} f(x,v)\, d\tilde{\mu}^{z,\lambda}_{t}:=\frac{1}{t}\int_{-t}^0f\big(\gamma^z_\lambda(s),\dot{\gamma}^z_\lambda(s)\big)\, ds,\quad \forall f\in C_c(TM).
\end{equation}
According to Lemma \ref{gameq}, the family $\{\tilde{\mu}^{z,\lambda}_{t}\}_{\substack{\lambda\in(0,\lambda_0)\\ t>0}}$ is tight. Hence, for each fixed $\lambda\in(0,\lambda_0)$, there exists a sequence $t_n\to +\infty$ such that $(\tilde{\mu}^{z,\lambda}_{t_n})_n$ weakly$^*$-converges to a probability measure $\tilde{\mu}_{z,\lambda}$. Moreover, there exists a sequence $\lambda_n\to 0^+$ such that $(\tilde{\mu}_{z,\lambda_n})_n$ weakly$^*$-converges to a probability measure $\tilde{\mu}_z$.
\end{defn}

\begin{lem}\label{mu*M}
Let $\tilde\mu_z$ be a weak$^*$-limit defined above. For each $z\in M$, the limit $\tilde{\mu}_z\in\widetilde{\mathfrak M}$.
\end{lem}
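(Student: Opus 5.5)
We have to show that $\tilde\mu_z$ is closed and that $\int_{TM}L\,d\tilde\mu_z=-c_0$; the reverse inequality $\geqslant -c_0$ is then automatic from Proposition~\ref{Mather}.

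\emph{Closedness.} By Lemma~\ref{gameq} the curves $\gamma^z_\lambda$ are $\kappa$-Lipschitz with $\kappa$ independent of $\lambda$. Hence every $\tilde\mu^{z,\lambda}_t$ is supported in the compact set $\{\|v\|_x\leqslant\kappa\}$, and so are $\tilde\mu_{z,\lambda}$ and $\tilde\mu_z$. This gives condition (1). For $f\in C^1(M)$ we have
\[\int\langle Df,v\rangle\,d\tilde\mu^{z,\lambda}_t=\frac{f(z)-f(\gamma^z_\lambda(-t))}{t}=O(1/t).\]
Since $\langle Df,v\rangle$ is continuous and bounded on the common compact support, we may pass to the weak$^*$ limit, first in $t_n\to\infty$ and then in $\lambda_n\to0$. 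This yields $\int\langle Df,v\rangle\,d\tilde\mu_z=0$.

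\emph{Action.} By calibration,
\[\frac1t\int_{-t}^0\Big[L(\gamma^z_\lambda,\dot\gamma^z_\lambda)+c_0-\lambda a(\gamma^z_\lambda)u_\lambda(\gamma^z_\lambda)\Big]ds=\frac{u_\lambda(z)-u_\lambda(\gamma^z_\lambda(-t))}{t}\leqslant\frac{2K}{t},\]
where $K$ is the uniform bound of Lemma~\ref{s3}. Since $L$ is continuous and the supports lie in a fixed compact set, $\int L\,d\tilde\mu^{z,\lambda}_{t_n}\to\int L\,d\tilde\mu_{z,\lambda}$. The function $a u_\lambda$ is continuous on $M$, so its integral also passes to the limit. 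This gives
\[\int L\,d\tilde\mu_{z,\lambda}+c_0-\lambda\int a u_\lambda\,d\mu_{z,\lambda}\leqslant0,\]
with equality in fact, although the inequality suffices. Using $|\lambda\int a u_\lambda|\leqslant\lambda\|a\|_\infty K\to0$ and letting $\lambda_n\to0$, we obtain $\int L\,d\tilde\mu_z\leqslant -c_0$. Lower semicontinuity from Lemma~\ref{TM} is not even needed, since everything takes place on a compact set where $L$ is continuous. Minimality in Proposition~\ref{Mather} then forces equality, so $\tilde\mu_z\in\widetilde{\mathfrak M}$.

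\emph{Main obstacle.} The only delicate point is justifying that the test functions $L$ and $\langle Df,v\rangle$, which are not compactly supported, may be used in the weak$^*$ convergence defined via $C_c(TM)$. This is resolved by the uniform velocity bound from Lemma~\ref{gameq}: multiply the test function by a cutoff equal to $1$ on $\{\|v\|\leqslant\kappa\}$, which leaves all the integrals unchanged. Tightness also guarantees that the limits are probability measures.
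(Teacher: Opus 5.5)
Your proof is correct and follows the paper's argument essentially step by step. Closedness comes from the boundary term $\bigl(f(z)-f(\gamma^z_\lambda(-t))\bigr)/t$, and minimality from the calibration identity, passed to the limit using the uniform velocity bound of Lemma~\ref{gameq} and the uniform bound on $u_\lambda$; the only cosmetic difference is that you keep just the inequality $\int L\,d\tilde\mu_z\leqslant -c_0$ and invoke Proposition~\ref{Mather} for equality, whereas the paper notes directly that the calibration quotient tends to $0$.
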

\begin{proof}
{\bf The measure $\tilde{\mu}_z$ is closed.} Let $f\in C^1(M)$. Then
\begin{align*}
\left|\int_{TM} \langle Df,v\rangle_x\, d\tilde{\mu}_{z,\lambda}\right|&=\left|\lim_{n\to +\infty}\frac{1}{t_n}\int_{-t_n}^0\frac{df}{ds}\big(\gamma^z_{\lambda}(s)\big)\, ds\right|
\\ &=\lim_{n\to +\infty}\frac{\big|f\big(\gamma^z_{\lambda}(0)\big)-f\big(\gamma^z_{\lambda}(-t_n)\big)\big|}{t_n}
\leqslant \lim_{n\to +\infty}\frac{2\|f\|_\infty}{t_n}=0.
\end{align*}
Therefore, $\tilde{\mu}_{z,\lambda}$ is closed. Then $\tilde{\mu}_z$ is also closed, since it is a limit of closed measures.

\medskip

{\bf The measure $\tilde{\mu}_z$ is minimizing.} Since $u_\lambda$ is uniformly bounded,
\begin{align*}
&\int_{TM} \big[L(x,v)+c_0-\lambda a(x)u_{\lambda}(x)\big]\, d\tilde{\mu}_{z,\lambda}
\\ &=\lim_{n\to +\infty}\frac{1}{t_n}\int_{-t_n}^0\Big[L\big(\gamma^z_{\lambda}(s),\dot{\gamma}^z_{\lambda}(s)\big)+c_0-\lambda a\big(\gamma^z_{\lambda}(s)\big)u_{\lambda}\big(\gamma^z_{\lambda}(s)\big)\Big]\, ds
\\ &=\lim_{n\to +\infty}\frac{u_{\lambda}\big(\gamma^z_{\lambda}(0)\big)-u_{\lambda}\big(\gamma^z_{\lambda}(-t_n)\big)}{t_n}=0.
\end{align*}
Let $(\tilde{\mu}_{z,\lambda_n})_n$ converge weakly$^*$ to
$\tilde{\mu}_z$. By the uniform bound on the velocities of the minimizing
curves, the measures $\tilde{\mu}_{z,\lambda_n}$ are supported in a common
compact subset of $TM$. Hence,
\[
\int_{TM} L(x,v)\,d\tilde{\mu}_z
=
\lim_{n\to+\infty}
\int_{TM} L(x,v)\,d\tilde{\mu}_{z,\lambda_n}.
\]
Since $u_{\lambda_n}$ is uniformly bounded, we have
\[
\int_{TM} L(x,v)\,d\tilde{\mu}_z
=
\lim_{n\to+\infty}\int_{TM}
\big[L(x,v)-\lambda_n a(x)u_{\lambda_n}(x)\big]
\,d\tilde{\mu}_{z,\lambda_n}
=-c_0,
\]
which implies that $\tilde{\mu}_z$ is minimizing.
\end{proof}

\begin{lem}\label{mu*>0}
For each $z\in M$ and $\lambda\in(0,\lambda_0)$, we have $\int_{TM} a(x)\, d\tilde{\mu}_{z,\lambda}\geqslant 0$.
\end{lem}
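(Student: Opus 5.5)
Assume by contradiction that $\int_{TM} a\,d\tilde\mu_{z,\lambda}<0$ for some $z$ and $\lambda$. We will use the large-time behavior of Lemma \ref{stab}: take $\varphi=u_\lambda-\varepsilon$ for small $\varepsilon>0$. Since $v^+_\lambda<u_\lambda$ and both are continuous on compact $M$, we have $v^+_\lambda<\varphi\leqslant u_\lambda$ for $\varepsilon$ small. Lemma \ref{stab} then gives $T^\lambda_t\varphi\to u_\lambda$ uniformly. The goal is to show that along the calibrated curve $\gamma^z_\lambda$ the gap $u_\lambda-T^\lambda_t\varphi$ cannot shrink, and in fact grows, when the time average of $a$ along the curve is negative.

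The first step is to set $w_t:=T^\lambda_t\varphi$ and use that $T^\lambda_t$ preserves order. Since $w_0=\varphi\leqslant u_\lambda=T^\lambda_tu_\lambda$, we get $w_t\leqslant u_\lambda$. Next, fix the backward calibrated curve $\gamma=\gamma^z_\lambda$ and use it as a test curve in the definition \eqref{T-}. For $t>0$, restrict to the segment $\gamma|_{[-t,0]}$, reparametrized to $[0,t]$. This gives
\[
w_t(z)\leqslant \varphi(\gamma(-t))+\int_{-t}^0\big[L+c_0-\lambda a(\gamma(s))\,w_{t+s}(\gamma(s))\big]ds .
\]
Subtracting the calibration identity for $u_\lambda$ along $\gamma$, and writing $g(s):=u_\lambda(\gamma(s))-w_{t+s}(\gamma(s))\geqslant 0$, we obtain
\[
g(0)\geqslant \varepsilon-\lambda\int_{-t}^0 a(\gamma(s))\,g(s)\,ds .
\]
The same inequality holds with $0$ replaced by any intermediate time $\tau\in[-t,0]$, using the dynamic programming property of the semigroup. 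This yields a linear integral inequality for $g$ along the curve.

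A Gronwall-type comparison then gives the lower bound
\[
g(0)\geqslant \varepsilon\exp\Big(-\lambda\int_{-t}^0 a(\gamma(s))\,ds\Big).
\]
This is the main obstacle, because $a$ changes sign and a naive Gronwall argument only works with a fixed sign. The approach I would try first is to argue on the ODE $G'(\tau)=-\lambda a(\gamma(\tau))G(\tau)$ with $G(-t)=\varepsilon$, whose solution is $\varepsilon e^{-\lambda\int a}$. The claim to establish is $g\geqslant G$ along the curve. For this, take the first time where $g<G$ fails; there both functions are positive, and the inequality is linear in $g$ with a coefficient of either sign, so one can compare the two. Alternatively, one can note that $G$ itself is attained as the discrepancy produced by the comparison function $u_\lambda-G$ along $\gamma$, since the equation is linear in $u$ along a fixed curve.

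Granting this bound, choose $t=t_n$ from Definition \ref{defmu}. Then $\frac1{t_n}\int_{-t_n}^0a(\gamma)\,ds\to\int a\,d\tilde\mu_{z,\lambda}<0$, so $\int_{-t_n}^0 a(\gamma(s))\,ds\to-\infty$. Hence $g(0)=u_\lambda(z)-T^\lambda_{t_n}\varphi(z)\to+\infty$, which contradicts the uniform convergence $T^\lambda_{t_n}\varphi\to u_\lambda$ from Lemma \ref{stab}. It would even suffice to show that $g(0)$ stays bounded below by $\varepsilon$ along the sequence $t_n$. Therefore $\int_{TM}a\,d\tilde\mu_{z,\lambda}\geqslant0$.
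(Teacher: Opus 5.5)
Your overall strategy matches the paper's. You argue by contradiction, take $\varphi=u_\lambda-\varepsilon$ with $\varepsilon<\min_M(u_\lambda-v^+_\lambda)$, compare $u_\lambda$ with $T^\lambda_t\varphi$ along $\gamma^z_\lambda$, and aim for an exponentially growing gap that contradicts Lemma \ref{stab}. The genuine gap is at the step you yourself flag as the main obstacle, and it is not resolved. The inequality you actually derive is anchored at the initial time: $g(\tau)\geqslant\varepsilon-\lambda\int_{-t}^{\tau}a(\gamma(s))g(s)\,ds$. This follows from \eqref{T-} alone, with no dynamic programming. Such a Volterra-type inequality does \emph{not} imply $g\geqslant\varepsilon e^{-\lambda\int a(\gamma)}$ once $a(\gamma(s))>0$ on part of the curve, and $a$ does change sign along $\gamma^z_\lambda$ in general. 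For instance, take $\lambda a\equiv1$ on $[0,1]$ and $\varepsilon=1$. A continuous $g\geqslant0$ equal to $100$ on $[0,0.1]$, decreasing linearly to $0$ on $[0.1,0.2]$, and equal to $0$ afterwards satisfies $g(\tau)\geqslant1-\int_0^\tau g$ for all $\tau$, yet $g(1)=0<e^{-1}$. Your first-crossing comparison fails for the same reason. After $g$ has exceeded $G$ where $a>0$, the term $-\lambda\int_{-t}^\tau a(g-G)$ can be very negative, so nothing stops $g$ from dropping below $G$ later.

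The missing ingredient is the inequality between \emph{any} two intermediate times. It comes from the semigroup property $T^\lambda_{t+s}=T^\lambda_s\circ T^\lambda_t$ applied to the segment $\gamma|_{[\sigma,\tau]}$: for $-t\leqslant\sigma<\tau\leqslant0$,
\[
T^\lambda_{t+\tau}\varphi(\gamma(\tau))\leqslant T^\lambda_{t+\sigma}\varphi(\gamma(\sigma))+\int_\sigma^\tau\big[L(\gamma,\dot\gamma)+c_0-\lambda a(\gamma(s))T^\lambda_{t+s}\varphi(\gamma(s))\big]ds .
\]
Combined with calibration on $[\sigma,\tau]$, this gives $g(\tau)-g(\sigma)\geqslant-\lambda\int_\sigma^\tau a(\gamma(s))g(s)\,ds$. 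Equivalently, $\dot g\geqslant-\lambda a(\gamma)g$ a.e., which is exactly the differential inequality the paper uses. With the integrating factor, $s\mapsto g(s)e^{\lambda\int_{-t}^{s}a(\gamma)}$ is nondecreasing, and no sign condition on $a$ is needed. Your first-crossing idea also works from this increment form, without needing differentiability. If $h=g-G$ vanishes at $\tau_0$ and is negative on $(\tau_0,\tau_1]$, then $|h(\tau)|\leqslant\lambda\|a\|_\infty\int_{\tau_0}^{\tau}|h|$, so $h\equiv0$ there by the standard Gronwall lemma. Once this step is repaired, the rest of your argument is fine, including using $t=t_n$ directly instead of the paper's shifted times $s_n=t_n+t_\lambda$.
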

\begin{proof}
We argue by contradiction. Assume there is $z\in M$, $\lambda\in(0,\lambda_0)$ and $B_{z,\lambda}>0$ such that
\[\int_{TM} a(x)\, d\tilde{\mu}_{z,\lambda}<-B_{z,\lambda}.\]
Then there is a sequence $t_n\to+\infty$, and $N>0$ such that if $n\geqslant N$, we have
\[\frac{1}{t_n}\int_{-t_n}^0a\big(\gamma^z_{\lambda}(s)\big)\, ds<-B_{z,\lambda}.\]
We take a small constant $\varepsilon>0$ such that
\[0<\varepsilon<\min_{x\in M}(u_\lambda-v^+_\lambda),\]
and define $u_\varepsilon:=u_{\lambda}-\varepsilon$. By Lemma \ref{stab}, there is $t_{\lambda}>0$ depending on $\lambda$ such that
\begin{equation}\label{T>del}
  u_{\lambda}(z)-T^{\lambda}_tu_\varepsilon(z)<\varepsilon,\quad \forall t>t_{\lambda}.
\end{equation}
Note that, since the convergence in Lemma \ref{stab} is uniform, $t_\lambda$ is independent of $z$. Define $s_n:=t_n+t_{\lambda}$, and
\[w_\lambda(s):=u_{\lambda}\big(\gamma^z_\lambda(s)\big)-T^{\lambda}_{s+s_n}u_\varepsilon\big(\gamma^z_\lambda(s)\big),\quad s\in(-s_n,0).\]
Since $\gamma^z_\lambda$ is a $u_\lambda$-calibrated curve, we have
\[\frac{du_{\lambda}}{ds}\big(\gamma^z_\lambda(s)\big)=L\big(\gamma^z_\lambda(s),\dot{\gamma}^z_\lambda(s)\big)+c_0-\lambda a\big(\gamma^z_\lambda(s)\big)u_{\lambda}\big(\gamma^z_\lambda(s)\big),\quad a.e.\ s<0.\]
Moreover, by the definition of the solution semigroup, it follows that
\begin{align*}
\frac{dT^{\lambda}_{s+s_n}u_\varepsilon}{ds}\big(\gamma^z_\lambda(s)\big)
\leqslant &L\big(\gamma^z_\lambda(s),\dot{\gamma}^z_\lambda(s)\big)+c_0
\\ &-\lambda a\big(\gamma^z_\lambda(s)\big)T^{\lambda}_{s+s_n}u_\varepsilon\big(\gamma^z_\lambda(s)\big),\quad a.e.\ s\in(-s_n,0).
\end{align*}
We conclude that
\[\dot w_\lambda(s)\geqslant -\lambda a\big(\gamma^z_\lambda(s)\big)w_\lambda(s),\quad a.e.\ s\in(-s_n,0).\]
Notice that
\[w_\lambda(-s_n)=u_{\lambda}\big(\gamma^z_\lambda(-s_n)\big)-u_\varepsilon\big(\gamma^z_\lambda(-s_n)\big)=\varepsilon.\]
When $t_n$ is large enough, we have
\begin{align*}
w_{\lambda}(0)&=u_{\lambda}(z)-T^{\lambda}_{s_n}u_\varepsilon(z)
\\ &\geqslant \varepsilon e^{-\int_{-t_n-t_{\lambda}}^{-t_n}\lambda a\textrm{$\big(\gamma^z_\lambda(s)\big)$}\, ds}e^{-\int_{-t_n}^{0}\lambda a\textrm{$\big(\gamma^z_\lambda(s)\big)$}\, ds}>\varepsilon e^{-\lambda \|a\|_\infty t_{\lambda}}e^{\lambda B_{z,\lambda} t_n}>\varepsilon,
\end{align*}
which contradicts \eqref{T>del}.
\end{proof}

\begin{lem}\label{auge}
Assume $u_{\lambda_n}$ converges to $u_*$ uniformly. For each $z\in M$, let $\tilde\mu_z$ be a weak$^*$-limit of $(\tilde \mu_{z,\lambda_n})_n$ defined above. Then
\[
\int_{TM}a(x)u_*(x)\,d\tilde\mu_z\geqslant0.
\]
\end{lem}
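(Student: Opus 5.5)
The plan is to refine the argument of Lemma \ref{mu*>0}: instead of comparing $u_\lambda$ with $T^\lambda_t(u_\lambda-\varepsilon)$, we compare it with $T^\lambda_t$ applied to a perturbation of $u_\lambda$ in the direction of $u_*$. This extracts first-order information in $\lambda$ along the calibrated curve $\gamma^z_\lambda$.

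The expected source of the inequality comes from the weighted average of $a u_\lambda$ along calibrated curves. Along $\gamma^z_\lambda$ we have
\[
\frac{d}{ds}u_\lambda\big(\gamma^z_\lambda(s)\big)=L+c_0-\lambda a u_\lambda .
\]
A critical subsolution $w$ instead satisfies
\[
\frac{d}{ds}w\big(\gamma^z_\lambda(s)\big)\leqslant L+c_0 .
\]
Subtracting and averaging over $[-t,0]$ gives
\[
\frac1t\big[(u_\lambda-w)(z)-(u_\lambda-w)(\gamma^z_\lambda(-t))\big]\leqslant -\lambda\int a u_\lambda\, d\tilde\mu^{z,\lambda}_t .
\]
With bounded functions the left side tends to $0$, so
\[
\int a u_\lambda\, d\tilde\mu_{z,\lambda}\leqslant 0 .
\]
This is the wrong direction, as in Lemma \ref{leqA}. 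So the reverse inequality must come from maximality of $u_\lambda$ via Lemma \ref{stab}, as in Lemma \ref{mu*>0}.

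Concretely, fix $\lambda=\lambda_n$ and suppose, for contradiction, that
\[
\int a u_*\, d\tilde\mu_z<-2B<0 .
\]
Since $u_{\lambda_n}\to u_*$ uniformly and $\tilde\mu_{z,\lambda_n}\to\tilde\mu_z$, for large $n$ we get
\[
\int a u_{\lambda_n}\, d\tilde\mu_{z,\lambda_n}<-B .
\]
Hence along a sequence $t_k\to\infty$,
\[
\frac1{t_k}\int_{-t_k}^0 a u_{\lambda}\big(\gamma^z_\lambda\big)\,ds<-B .
\]
Next, I would consider the family $\varphi_\theta:=u_\lambda-\varepsilon$, or better $\varphi=(1-\theta)u_\lambda+\theta v^+_\lambda$, which is still admissible for Lemma \ref{stab}. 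Set
\[
w(s)=u_\lambda(\gamma(s))-T^\lambda_{s+s_k}\varphi(\gamma(s)).
\]
As in the proof of Lemma \ref{mu*>0}, $w$ obeys $\dot w\geqslant -\lambda a w$. The key step is to linearize the exponential growth factor differently. Rather than using $\int a\,d\tilde\mu_{z,\lambda}<0$, which is exactly what Lemma \ref{mu*>0} already excludes, I would use the exact semigroup structure: the discount term along the curve equals $\lambda a u_\lambda$, not $\lambda a w$. Writing
\[
\frac{d}{ds}\log u_\lambda\ \text{(where } u_\lambda>0\text{)}
\]
is problematic, so instead I would pick $\varphi=(1+\theta)u_\lambda$ with small $\theta$, after shifting by a constant so that $v^+_\lambda<\varphi\leqslant u_\lambda$. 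Scaling in $u$ interacts linearly with the term $\lambda a u$, and this should produce a growth rate of $-\lambda\int a u_\lambda\,d\tilde\mu>\lambda B$. That would contradict Lemma \ref{stab} in the same way as in Lemma \ref{mu*>0}.

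The main obstacle is this step: the term $-\lambda a u$ is not homogeneous under adding constants. So one must find a one-parameter family $\varphi_\theta$ between $v^+_\lambda$ and $u_\lambda$ whose $\theta$-derivative along calibrated curves evolves by $\dot{\partial_\theta}=-\lambda a\,\partial_\theta$ with source $\lambda a u_\lambda$. If that direct route fails, the fallback is a Mather-measure duality argument. Here I would view $\lambda a u_\lambda$ as part of the Hamiltonian $H_\lambda$ and use Lemma \ref{mu*>0} applied to the modified coefficient $a(u_\lambda-c)$ for a suitable constant $c$ with $u_\lambda-c$ of fixed sign where needed. The limit then passes to $u_*$ by uniform convergence and weak$^*$ convergence on a common compact set, as in Lemma \ref{mu*M}.
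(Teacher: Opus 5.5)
\textbf{Verdict: there is a genuine gap.} Your first computation would already prove the lemma, but a sign slip made you discard it, and the route you pursue instead cannot work.

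\textbf{The sign slip.} Subtracting the subsolution inequality $\frac{d}{ds}w(\gamma^z_\lambda(s))\leqslant L+c_0$ from the calibration identity for $u_\lambda$ gives a \emph{lower} bound, not an upper bound:
\[
\frac{d}{ds}(u_\lambda-w)\big(\gamma^z_\lambda(s)\big)\geqslant-\lambda a\big(\gamma^z_\lambda(s)\big)u_\lambda\big(\gamma^z_\lambda(s)\big).
\]
Now integrate over $[-t_k,0]$, divide by $t_k$, and let $k\to\infty$. The left side is bounded by $2\|u_\lambda-w\|_\infty/t_k\to0$, and $a u_\lambda$ is continuous. So you get $\int a u_\lambda\,d\tilde\mu_{z,\lambda}\geqslant0$, which is the right direction. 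It holds for every $\lambda$ and for any solution of \eqref{E}, maximal or not. Passing to the limit along $\lambda_n$ then finishes the proof. This uses uniform convergence of $u_{\lambda_n}$ and weak$^*$ convergence of measures supported in a common compact set, by Lemma \ref{gameq}.

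This is the dual form of the paper's argument. The paper uses calibration to get $\int(L+c_0-\lambda a u_\lambda)\,d\tilde\mu_{z,\lambda}=0$. It then uses closedness of $\tilde\mu_{z,\lambda}$ to get $\int(L+c_0)\,d\tilde\mu_{z,\lambda}\geqslant0$. Your critical subsolution $w$ is exactly a certificate for that second inequality.

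The analogy with Lemma \ref{leqA} misled you. There the measure is minimizing for $H$ and is tested against $H_\lambda$. Here $\tilde\mu_{z,\lambda}$ comes from $u_\lambda$-calibrated curves, so it is minimizing for $H_\lambda$ and tested against $H$. That is why the inequality flips.

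\textbf{Why the route you pursue fails.} Take any admissible datum $\varphi$ and set $w(s)=u_\lambda(\gamma^z_\lambda(s))-T^\lambda_{s+s_k}\varphi(\gamma^z_\lambda(s))$. This $w$ satisfies only the homogeneous linear inequality $\dot w\geqslant-\lambda a w$, with no source term. Hence
\[
w(0)\geqslant w(-s_k)\exp\big(-\lambda\textstyle\int_{-s_k}^0a(\gamma^z_\lambda(s))\,ds\big).
\]
The choice of $\varphi$ (shifted $u_\lambda$, convex combination with $v^+_\lambda$, or $(1+\theta)u_\lambda$ minus a constant) changes only $w(-s_k)$. The exponential rate is governed by $\int a\,d\tilde\mu_{z,\lambda}$, never by $\int a u_\lambda\,d\tilde\mu_{z,\lambda}$. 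So comparison via Lemma \ref{stab} can only reproduce Lemma \ref{mu*>0}; it cannot give the rate $-\lambda\int a u_\lambda$ you hoped for.

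Your fallback, applying Lemma \ref{mu*>0} with coefficient $a(u_\lambda-c)$, is not tied to any equation to which that lemma applies, so it cannot be checked. As written, the proposal contains no complete argument; correcting the sign in your first step supplies one.
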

\begin{proof}
For each $\lambda\in(0,\lambda_0)$, assume $\tilde \mu^{z,\lambda}_{t_n}\to\tilde \mu_{z,\lambda}$ weakly. By \eqref{muz},
\begin{align*}
&\int_{TM}\big[L(x,v)+c_0-\lambda a(x)u_{\lambda}(x)\big] \, d\tilde{\mu}_{z,\lambda}
\\&=\lim_{n\to+\infty}\frac{1}{t_n}\int_{-t_n}^0\Big[L\big(\gamma^z_{\lambda}(s),\dot{\gamma}^z_{\lambda}(s)\big)+c_0-\lambda a\big(\gamma^z_{\lambda}(s)\big)u_{\lambda}\big(\gamma^z_{\lambda}(s)\big)\Big]ds
\\&=\lim_{n\to+\infty}\frac{u_{\lambda}\big(\gamma^z_{\lambda}(0)\big)-u_{\lambda}\big(\gamma^z_{\lambda}(-t_n)\big)}{t_n}=0.
\end{align*}
Here we recall that $\tilde{\mu}_{z,\lambda}$ is closed, which implies
\[\int_{TM}L(x,v)\,d\tilde\mu_{z,\lambda}\geqslant-c_0.\]
We obtain
\[\lambda\int_{TM}a(x)u_\lambda(x)\,d\tilde\mu_{z,\lambda}
=\int_{TM}[L(x,v)+c_0] \,d\tilde\mu_{z,\lambda}\geqslant0.\]
Let $\lambda_n\to 0$, and up to a subsequence, we obtain the desired inequality.
\end{proof}

Any extremal decomposition of $\tilde\mu\in\widetilde{\mathfrak M}_1$ is concentrated on extremal Mather measures $\tilde\nu$ satisfying $\supp(\nu)\subset\supp(\mu)\subset\mathcal A_1$. Hence, $a(x)>0$ on $\supp(\nu)$ for such measures.

\begin{lem}\label{extm}
For each extremal point $\tilde\mu\in\widetilde{\mathfrak M}$, we have either $\supp(\mu)\subset \mathcal A_1$ or $\supp(\mu)\cap \mathcal A_1=\emptyset$.
\end{lem}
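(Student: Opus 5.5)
The plan is to argue by decomposing an extremal Mather measure according to the splitting $M\supset\mathcal A=\mathcal A_1\sqcup(\mathcal A\setminus\mathcal A_1)$ and to show that a nontrivial splitting contradicts extremality. Let $\tilde\mu\in\widetilde{\mathfrak M}$ be extremal and suppose $\supp(\mu)\cap\mathcal A_1\neq\emptyset$ and $\supp(\mu)\not\subset\mathcal A_1$. Since $\mathcal M\subset\mathcal A$ and $\dist(\mathcal A_1,\mathcal A\setminus\mathcal A_1)\geqslant\delta>0$, both $\mathcal A_1$ and $\mathcal A_2:=\mathcal A\setminus\mathcal A_1$ are closed. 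We set $U_1:=\{x\in M\mid \dist(x,\mathcal A_1)<\delta/3\}$ and $U_2:=\{x\in M\mid\dist(x,\mathcal A_2)<\delta/3\}$, which are disjoint open sets. Then $\theta:=\mu(\mathcal A_1)\in(0,1)$ (positivity follows because $\supp(\mu)\cap\mathcal A_1\ne\emptyset$ and $\mathcal A_1=\supp(\mu)\cap U_1$ is relatively open in $\supp(\mu)$). We define
\[\tilde\mu_1:=\frac{1}{\theta}\,\tilde\mu\big|_{\pi^{-1}(\mathcal A_1)},\qquad \tilde\mu_2:=\frac{1}{1-\theta}\,\tilde\mu\big|_{\pi^{-1}(\mathcal A_2)},\]
so that $\tilde\mu=\theta\tilde\mu_1+(1-\theta)\tilde\mu_2$ with $\tilde\mu_1\neq\tilde\mu_2$.

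The key step is to show that $\tilde\mu_1,\tilde\mu_2$ are closed measures. Given $f\in C^1(M)$, choose a smooth cutoff $\chi\in C^\infty(M)$ with $\chi\equiv1$ on $U_1$ and $\chi\equiv0$ on $U_2$. This is possible since $\overline{U_1}\cap\overline{U_2}=\emptyset$. Then $\chi f\in C^1(M)$ and $D(\chi f)=Df$ on $U_1$, $=0$ on $U_2$. Since $\tilde\mu$ is concentrated on $\pi^{-1}(\mathcal A_1\cup\mathcal A_2)$,
\[0=\int_{TM}\langle D(\chi f),v\rangle_x\,d\tilde\mu=\theta\int_{TM}\langle Df,v\rangle_x\,d\tilde\mu_1,\]
so $\tilde\mu_1$ is closed, and similarly $\tilde\mu_2$. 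Integrability of $\|v\|_x$ is inherited from $\tilde\mu$.

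Next, minimality: by Proposition \ref{Mather}, $\int L\,d\tilde\mu_i\geqslant -c_0$ for $i=1,2$. Also $-c_0=\int L\,d\tilde\mu=\theta\int L\,d\tilde\mu_1+(1-\theta)\int L\,d\tilde\mu_2$, so both integrals equal $-c_0$. Hence $\tilde\mu_1,\tilde\mu_2\in\widetilde{\mathfrak M}$, and $\tilde\mu$ is a nontrivial convex combination of two distinct Mather measures, which contradicts extremality.

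The only delicate point is the existence of the cutoff $\chi$ and the use of the fact that $\supp(\mu)\subset\mathcal M\subset\mathcal A$, which ensures that no mass lies outside $U_1\cup U_2$. Once this is in place, the closedness computation is immediate. I expect no genuine obstacle beyond verifying that $\theta\in(0,1)$.
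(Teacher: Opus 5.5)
Your proof is correct and is essentially the paper's argument. The paper weights $\tilde\mu$ by a $C^1$ function $f$ equal to $1$ near $\mathcal A_1$ and $0$ near $\mathcal A\setminus\mathcal A_1$, and checks closedness by testing against $f\phi$. Since $\mu$ is concentrated on $\mathcal A$, this produces exactly your restricted measures $\tilde\mu_1,\tilde\mu_2$, and the paper then uses the same minimality argument to contradict extremality.

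One small slip: in justifying $\theta>0$ you should write $\supp(\mu)\cap\mathcal A_1=\supp(\mu)\cap U_1$, a nonempty relatively open subset of $\supp(\mu)$, not $\mathcal A_1=\supp(\mu)\cap U_1$. The bound $\theta<1$ follows by the symmetric argument, which the paper leaves implicit.
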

\begin{proof}
Let $\tilde\mu$ be an extremal Mather measure. Assume by contradiction that $\supp(\mu)\cap\mathcal A_1\neq\emptyset$ and $\supp(\mu)\cap(\mathcal A\setminus\mathcal A_1)\neq\emptyset$. Since $\dist(\mathcal A_1,\mathcal A\setminus \mathcal A_1)\geqslant \delta>0$, we can take $f\in C^1(M)$ such that $f\equiv 1$ near $\mathcal A_1$, $f\equiv 0$ near $\mathcal A\setminus \mathcal A_1$ and $\alpha:=\int_{TM} f(x)\, d\tilde\mu\in (0,1)$. In this case, $Df=0$ on $\mathcal A$. Then for all $\phi\in C^1(M)$,
\[\int_{TM}\langle D\phi(x),v\rangle_x f(x)\, d\tilde\mu=\int_{TM}\langle D(f\phi)(x),v\rangle_x \, d\tilde\mu=0,\]
which implies that $\tilde\mu_1:=\frac{f}{\alpha}\tilde\mu$ is a closed probability measure. Similarly, $\tilde\mu_2:=\frac{1-f}{1-\alpha}\tilde\mu$ is also a closed probability measure. By Proposition \ref{Mather}, we have
\[\int_{TM}L(x,v)\, d\tilde\mu_1\geqslant -c_0\quad\text{and}\quad \int_{TM}L(x,v)\, d\tilde\mu_2\geqslant -c_0.\]
We also have
\[\int_{TM}L(x,v)\, d\tilde\mu=\alpha\int_{TM}L(x,v)\, d\tilde\mu_1+(1-\alpha)\int_{TM}L(x,v)\, d\tilde\mu_2=-c_0,\]
which implies that
\[\int_{TM}L(x,v)\, d\tilde\mu_1= -c_0\quad\text{and}\quad \int_{TM}L(x,v)\, d\tilde\mu_2= -c_0.\]
Hence, both $\tilde\mu_1$ and $\tilde\mu_2$ are Mather measures, which contradicts the extremality of $\tilde\mu$.
\end{proof}

\begin{lem}\label{w1w2}
For any two subsolutions $w_1$ and $w_2$ of \eqref{E0}, $w_1-w_2$ is a constant on the support of $\mu$, where $\mu$ is the projection of an extreme point $\tilde\mu$ of $\widetilde{\mathfrak M}$.
\end{lem}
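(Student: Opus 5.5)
The idea is to show that, for any two subsolutions $w_1,w_2$ of \eqref{E0}, the function $u:=w_1-w_2$ is ``invariant'' under $\tilde\mu$ in a weak sense. This will make $\psi(u)\,\tilde\mu$ a closed measure for every $C^1$ function $\psi$. If $u$ were not constant on $\supp(\mu)$, we could then split $\tilde\mu$ into two distinct Mather measures exactly as in Lemma \ref{extm}, contradicting extremality. We must avoid any flow or Tonelli-type regularity, since $H$ is only continuous, convex and superlinear. So we work throughout with the smooth approximations of Proposition \ref{ue} and with closedness of measures.

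\textbf{Step 1: calibration identity in $L^1(\tilde\mu)$.} Let $w$ be a subsolution of \eqref{E0}. By Proposition \ref{soleq} it is Lipschitz. By Proposition \ref{ue} (with $G=H$), for each $\varepsilon>0$ there is $w^\varepsilon\in C^\infty(M)$ with $\|w-w^\varepsilon\|_\infty\leqslant\varepsilon$ and $H(x,Dw^\varepsilon)\leqslant c_0+\varepsilon$ everywhere. The Fenchel inequality then gives
\[
g_\varepsilon(x,v):=L(x,v)+c_0-\langle Dw^\varepsilon(x),v\rangle_x\geqslant-\varepsilon .
\]
Since $\tilde\mu$ is closed, $\int\langle Dw^\varepsilon,v\rangle\,d\tilde\mu=0$. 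Since $\tilde\mu$ is a Mather measure, $\int(L+c_0)\,d\tilde\mu=0$. Hence $\int g_\varepsilon\,d\tilde\mu=0$, and combined with $g_\varepsilon\geqslant-\varepsilon$ this yields $\int|g_\varepsilon|\,d\tilde\mu\leqslant2\varepsilon$. Applying this to $w_1$ and $w_2$ and setting $u^\varepsilon:=w_1^\varepsilon-w_2^\varepsilon$, we obtain
\[
\int_{TM}\big|\langle Du^\varepsilon,v\rangle_x\big|\,d\tilde\mu\leqslant4\varepsilon,
\qquad \|u^\varepsilon-u\|_\infty\leqslant2\varepsilon .
\]

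\textbf{Step 2: $\psi(u)\tilde\mu$ is closed.} Fix $\psi\in C^1(\mathbb R)$ with $\psi,\psi'$ bounded, and $\phi\in C^1(M)$. Closedness of $\tilde\mu$ applied to the $C^1$ function $\phi\,\psi(u^\varepsilon)$ gives
\[
\int\psi(u^\varepsilon)\langle D\phi,v\rangle\,d\tilde\mu=-\int\phi\,\psi'(u^\varepsilon)\langle Du^\varepsilon,v\rangle\,d\tilde\mu .
\]
By Step 1 the right-hand side is $O(\varepsilon)$. On the left, $\psi(u^\varepsilon)\to\psi(u)$ uniformly and $\int\|v\|\,d\tilde\mu<\infty$, so letting $\varepsilon\to0$ we get $\int\psi(u)\langle D\phi,v\rangle\,d\tilde\mu=0$. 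Thus $\psi(u)\tilde\mu$ is a closed (signed) measure, and so is $(1-\psi(u))\tilde\mu$.

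\textbf{Step 3: splitting.} Suppose $u$ is not constant on $\supp(\mu)$, and pick $x_1,x_2\in\supp(\mu)$ with $u(x_1)=s_1<s_2=u(x_2)$. Choose $\psi\in C^1$ with $0\leqslant\psi\leqslant1$, $\psi=0$ on $(-\infty,s_1+\eta]$ and $\psi=1$ on $[s_2-\eta,\infty)$, where $\eta<(s_2-s_1)/2$. By continuity of $u$, the function $\psi(u)$ vanishes on a neighbourhood of $x_1$ and equals $1$ on a neighbourhood of $x_2$. These neighbourhoods have positive $\mu$-measure because both points lie in the support. Hence $\alpha:=\int\psi(u)\,d\tilde\mu\in(0,1)$, and $\psi(u)$ is not $\mu$-a.e. constant.

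Set $\tilde\mu_1:=\psi(u)\tilde\mu/\alpha$ and $\tilde\mu_2:=(1-\psi(u))\tilde\mu/(1-\alpha)$. These are closed probability measures with $\tilde\mu=\alpha\tilde\mu_1+(1-\alpha)\tilde\mu_2$. The argument of Lemma \ref{extm} (via Proposition \ref{Mather}) shows both are Mather measures. Moreover $\tilde\mu_1\neq\tilde\mu$, because their densities with respect to $\tilde\mu$ differ on a set of positive measure. This contradicts extremality, so $u$ is constant on $\supp(\mu)$.

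\textbf{Main obstacle.} The delicate point is Step 1–2: without differentiability of $w_i$ along $\supp(\mu)$ or any Euler–Lagrange flow, we must extract the identity ``$\langle Du,v\rangle=0$ $\tilde\mu$-a.e.'' from a one-sided Fenchel inequality. The plan resolves this by using the exact value $\int(L+c_0)\,d\tilde\mu=0$, which upgrades the inequality $g_\varepsilon\geqslant-\varepsilon$ to $L^1$-smallness of $g_\varepsilon$. The same smooth approximants are then used inside the closedness test, so no derivative of $u$ itself is ever needed.
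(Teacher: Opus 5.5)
Your proposal is correct and follows the paper's proof essentially step for step. The shared ingredients are the $L^1(\tilde\mu)$ estimate $\int|\langle Du^\varepsilon,v\rangle_x|\,d\tilde\mu\leqslant4\varepsilon$ obtained from Fenchel's inequality and the identity $\int(L+c_0)\,d\tilde\mu=0$, the closedness of $\psi(u)\tilde\mu$ via the smooth approximants, and the splitting of $\tilde\mu$ into two Mather measures to contradict extremality. The only difference is cosmetic: you use a cutoff $\psi$ that separates two values of $u$ on $\supp(\mu)$, whereas the paper uses a strictly increasing $\theta$ with values in $(0,1)$ and concludes with a final continuity argument.
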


\begin{proof}
Set $f:=w_1-w_2$.

\medskip

{\bf Step 1.} By Proposition \ref{ue}, for $i=1,2$ there exist smooth functions $w_i^\varepsilon\in C^\infty(M)$ such that
\[
w_i^\varepsilon \to w_i \quad\text{uniformly on }M,
\]
and
\[
H\big(x,Dw_i^\varepsilon(x)\big)\leqslant c_0+\varepsilon \qquad \forall x\in M.
\]
Define $f^\varepsilon:=w_1^\varepsilon-w_2^\varepsilon$. Then $f^\varepsilon\to f$ uniformly.

\medskip

For each $i$, Fenchel's inequality gives, for all $(x,v)\in TM$,
\[
\langle Dw_i^\varepsilon(x),v\rangle_x  \leqslant L(x,v)+H\big(x,Dw_i^\varepsilon(x)\big)
\leqslant L(x,v)+c_0+\varepsilon.
\]
Since $\tilde \mu$ is closed, $\int_{TM} \langle Dw_i^\varepsilon(x), v\rangle_x \,d\tilde \mu=0$. Also, since $\tilde \mu\in\widetilde{\mathfrak M}$,
\[
\int_{TM} \big(L(x,v)+c_0\big)\,d\tilde\mu=0.
\]
Therefore,
\[L(x,v)+c_0-\langle Dw_i^\varepsilon(x), v\rangle_x \geqslant-\varepsilon\quad \text{and}\quad \int_{TM}\big(L(x,v)+c_0-\langle Dw_i^\varepsilon(x), v\rangle_x \big)\,d\tilde\mu=0.\]
Hence,
\[
\int_{TM} \big|L(x,v)+c_0-\langle Dw_i^\varepsilon(x),v\rangle_x \big|\,d\tilde\mu \leqslant 2\varepsilon.
\]
Subtracting the inequalities for $i=1$ and $i=2$ yields
\begin{equation}\label{4ep}
\int_{TM} |\langle Df^\varepsilon(x), v\rangle_x |\,d\tilde\mu \leqslant 4\varepsilon.
\end{equation}

\medskip

{\bf Step 2.} Let $\theta\in C^1_b(\mathbb R)$ be bounded with bounded derivative. For any $\phi\in C^1(M)$, the function $x\mapsto \phi(x)\theta\big(f^\varepsilon(x)\big)$ is smooth enough to be used as a test function in the closedness condition for $\tilde\mu$, that is, we have
\[
\int_{TM} \big\langle D\big(\phi\,\theta(f^\varepsilon)\big)(x),v\big\rangle_x\,d\tilde \mu=0.
\]
Expanding,
\[
\int_{TM}\theta(f^\varepsilon)\langle D\phi,v\rangle_x\,d\tilde \mu
+\int_{TM}\phi\,\theta'(f^\varepsilon)\langle Df^\varepsilon,v\rangle_x\,d\tilde\mu=0.
\]
The second integral is bounded by $\|\phi\|_\infty\|\theta'\|_\infty \int_{TM}|\langle Df^\varepsilon,v\rangle_x|\,d\tilde \mu$, which tends to $0$ as $\varepsilon \to 0$ by \eqref{4ep}. The first integral converges to
\[
\int_{TM}\theta(f)\langle D\phi,v\rangle_x\,d\tilde\mu
\]
because $f^\varepsilon\to f$ uniformly and $\tilde \mu$ has finite first moment. Hence
\[
\int_{TM}\theta(f)\langle D\phi,v\rangle_x\,d\tilde \mu=0.
\]
Thus $\theta(f)\tilde\mu$ is a closed signed measure. Since $\theta(f)$ is bounded, the total variation is finite.

\medskip

{\bf Step 3.} Assume, for contradiction, that $f$ is not constant $\mu$-almost everywhere. Choose a strictly increasing function $\theta\in C^1(\mathbb R)$ with $\theta(s)\in (0,1)$ for all $s\in\mathbb R$ and $x\mapsto \theta\big(f(x)\big)$ is not constant $\mu$-a.e. Define
\[
\alpha:=\int_{TM}\theta\big(f(x)\big)\,d\tilde\mu \in (0,1),
\]
and set
\[
\tilde\mu_1:=\frac{\theta(f)}{\alpha}\tilde\mu,\qquad 
\tilde\mu_2:=\frac{1-\theta(f)}{1-\alpha}\tilde\mu.
\]
By Step 2, both $\tilde\mu_1$ and $\tilde\mu_2$ are closed measures. Moreover, they are minimizing: since every closed measure $\tilde\nu$ satisfies $\int_{TM} L(x,v)\,d\tilde\nu\geqslant -c_0$, and
\[
-c_0=\int_{TM} L(x,v)\,d\tilde\mu
=\alpha\int_{TM} L(x,v)\,d\tilde\mu_1+(1-\alpha)\int_{TM} L(x,v)\,d\tilde\mu_2,
\]
we must have equality in both terms, i.e.
\[
\int_{TM} L(x,v)\,d\tilde \mu_1=\int_{TM} L(x,v)\,d\tilde\mu_2=-c_0.
\]
Thus $\tilde\mu_1,\tilde\mu_2\in\widetilde{\mathfrak M}$. They are distinct because $\theta(f)$ is not constant $\mu$-a.e. The convex decomposition
\[
\tilde\mu=\alpha \tilde \mu_1+(1-\alpha)\tilde\mu_2
\]
is therefore nontrivial, contradicting the extremality of $\tilde\mu$.

Finally, since $w_1-w_2$ is continuous, it is constant on $\supp(\mu)$.
\end{proof}

Let $u_{\lambda_n}\to u_*$. By Lemma \ref{leqA}, $u_*\leqslant u_0$.
\begin{lem}\label{u0<A}
We have
\[\int_{TM}a(x)u_0(x)\,d\tilde\mu\leqslant0
\quad\text{for all }\tilde\mu\in\widetilde{\mathfrak M}_1.\]
\end{lem}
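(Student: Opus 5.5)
The plan is to reduce the inequality to extremal Mather measures via Lemma \ref{w1w2}, and then pass to a general $\tilde\mu\in\widetilde{\mathfrak M}_1$ by a Choquet (extremal) decomposition. The difficulty is that $u_0$ is a supremum: the constraint $\int a w\,d\tilde\mu\leqslant 0$ holds for each $w\in\mathcal S$ separately, and integrals do not commute with suprema. On the support of an extremal measure, however, all subsolutions differ from $v_0$ only by a constant, so the supremum collapses to a supremum of constants.

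\textbf{Step 1: $u_0$ is a well-defined subsolution.} By Lemma \ref{leqA}, any limit point $u_*$ lies in $\mathcal S$, so $\mathcal S\neq\emptyset$. All subsolutions of \eqref{E0} are equi-Lipschitz, by Proposition \ref{soleq}(iii) and the superlinearity (H2). Step 2 below shows that elements of $\mathcal S$ are bounded above on $\supp(\mu)$ for one fixed extremal $\tilde\mu\in\widetilde{\mathfrak M}_1$; such a measure exists by the remark after Theorem \ref{thm1} together with the extremal decomposition. Equi-Lipschitz continuity then gives a uniform upper bound on all of $M$. Hence $u_0$ is finite and Lipschitz, and it is a subsolution by Proposition \ref{supsub}.

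\textbf{Step 2: extremal measures.} Let $\tilde\mu\in\widetilde{\mathfrak M}_1$ be an extreme point of $\widetilde{\mathfrak M}$.
\begin{itemize}
\item By Lemma \ref{w1w2}, for each $w\in\mathcal S$ there is a constant $c_w$ with $w=v_0+c_w$ on $\supp(\mu)$.
\item Since $a>0$ on $\mathcal A_1\supset\supp(\mu)$, we have $\int a\,d\tilde\mu>0$.
\item The constraint then reads $\int a v_0\,d\tilde\mu+c_w\int a\,d\tilde\mu\leqslant0$, that is,
\[
c_w\leqslant \kappa_\mu:=-\frac{\int_{TM} a v_0\,d\tilde\mu}{\int_{TM} a\,d\tilde\mu}.
\]
\end{itemize}
Consequently, on $\supp(\mu)$ we have $u_0=v_0+\sup_{w\in\mathcal S}c_w\leqslant v_0+\kappa_\mu$. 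Because $a\geqslant0$ on $\supp(\mu)$, this yields
\[
\int_{TM} a u_0\,d\tilde\mu\leqslant \int_{TM} a v_0\,d\tilde\mu+\kappa_\mu\int_{TM} a\,d\tilde\mu=0 .
\]

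\textbf{Step 3: general $\tilde\mu\in\widetilde{\mathfrak M}_1$.} The set $\widetilde{\mathfrak M}$ is a compact convex subset of a locally convex space, and it is metrizable because the measures in question are supported in a compact set of bounded velocities. Choquet's theorem therefore gives a probability measure $P$ on the extreme points such that $\tilde\mu=\int\tilde\nu\,dP(\tilde\nu)$. As noted before Lemma \ref{extm}, $P$-almost every $\tilde\nu$ satisfies $\supp(\nu)\subset\supp(\mu)\subset\mathcal A_1$, so each such $\tilde\nu$ is an extremal element of $\widetilde{\mathfrak M}_1$. Since $a u_0$ is continuous, the map $\tilde\nu\mapsto\int a u_0\,d\tilde\nu$ is continuous and affine. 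Integrating the bound from Step 2 against $P$ then gives $\int a u_0\,d\tilde\mu\leqslant0$.

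The main obstacle is the justification of the decomposition in Step 3. One must verify that the barycenter formula holds for the continuous test function $a u_0$, and that the support inclusion for $P$-almost every component is correct. If this proves delicate, I would instead apply the ergodic decomposition with respect to the Euler–Lagrange flow. Alternatively, one can argue directly: if $\tilde\nu$ with support meeting the complement of $\supp(\mu)$ had positive $P$-mass, the barycenter would charge that complement, which is impossible.
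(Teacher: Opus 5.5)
Your proposal is correct and follows the paper's proof. Lemma \ref{w1w2} collapses the supremum to $v_0+\sup_{w\in\mathcal S}c_w$ on the support of each extremal $\tilde\mu\in\widetilde{\mathfrak M}_1$; this gives both the upper bound that makes $u_0$ a finite subsolution and the inequality for extremal measures, and the general case then follows by the extremal decomposition, which the paper only asserts with ``which implies''.

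For that last step, Krein--Milman is enough and lighter than Choquet: $\widetilde{\mathfrak M}_1$ is a compact face of $\widetilde{\mathfrak M}$, so its extreme points are extreme in $\widetilde{\mathfrak M}$, and the closed convex set $\{\tilde\nu\in\widetilde{\mathfrak M}_1 : \int_{TM} a u_0\,d\tilde\nu\leqslant 0\}$ contains them and hence equals $\widetilde{\mathfrak M}_1$.
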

\begin{proof}
Notice first that $\mathcal S\neq\emptyset$. Indeed, if $u_{\lambda_n}\to u_*$ uniformly, then Lemma \ref{leqA} gives $u_*\in\mathcal S$.
Fix a solution $v$ of \eqref{E0}. By Lemma \ref{w1w2}, for each $w\in\mathcal S$ and each extremal measure $\tilde\mu\in\widetilde{\mathfrak M}_1$, there exists a constant $c(w,\tilde\mu)\in\R$ such that
\[w(x)=v(x)+c(w,\tilde\mu)\quad\text{for all }x\in\supp(\mu).\]
Since $w\in\mathcal S$,
\[c(w,\tilde\mu)\leqslant
-\frac{\int_{TM}a(x)v(x)\,d\tilde\mu}
{\int_{TM}a(x)\,d\tilde\mu}.\]
Fixing one such extremal measure $\tilde\mu$, the right-hand side is independent of $w$. Hence $\mathcal S$ is uniformly bounded from above on $\supp(\mu)$. Since critical subsolutions are equi-Lipschitz by the coercivity of $H$, $\mathcal S$ is uniformly bounded from above on $M$. Therefore $u_0:=\sup_{w\in\mathcal S}w$ is finite and Lipschitz continuous. By Proposition \ref{supsub}, $u_0$ is a subsolution of \eqref{E0}. Moreover, setting $c(\tilde\mu):=\sup_{w\in\mathcal S}c(w,\tilde\mu)$, we have
\[u_0(x)=v(x)+c(\tilde\mu)\quad\text{for all }x\in\supp(\mu)\text{ and all extremal measures }\tilde\mu\in\widetilde{\mathfrak M}_1,\]
and
\[c(\tilde\mu)\leqslant
-\frac{\int_{TM}a(x)v(x)\,d\tilde\mu}
{\int_{TM}a(x)\,d\tilde\mu}.\]
Hence,
\begin{align*}
&\int_{TM}a(x)u_0(x)\, d\tilde \mu
\\ &=\int_{TM}a(x)v(x)\, d\tilde \mu+c(\tilde\mu)\int_{TM}a(x)\, d\tilde \mu\leqslant 0\quad \text{for all extremal measures }\tilde \mu\in\widetilde{\mathfrak M}_1,
\end{align*}
which implies
\[\int_{TM}a(x)u_0(x)\, d\tilde \mu\leqslant 0\quad \text{for all }\tilde \mu\in\widetilde{\mathfrak M}_1.\]
\end{proof}

\begin{lem}\label{ganga}
Let $u_{\lambda_n}$ converge to $u_*$ uniformly. Then up to a subsequence, $\gamma_n:=\gamma^z_{\lambda_n}$ converges to $\gamma$ locally uniformly and $\gamma$ is a $u_*$-calibrated curve.
\end{lem}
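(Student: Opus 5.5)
The plan is Arzelà--Ascoli followed by passage to the limit in the calibration identity. Lemma \ref{gameq} says the curves $\gamma_n=\gamma^z_{\lambda_n}:(-\infty,0]\to M$ are equi-Lipschitz, say with constant $\kappa$ independent of $n$, and $\gamma_n(0)=z$. Since $M$ is compact, Arzelà--Ascoli on each $[-k,0]$ together with a diagonal extraction gives a subsequence (not relabeled) converging locally uniformly on $(-\infty,0]$ to a $\kappa$-Lipschitz curve $\gamma$ with $\gamma(0)=z$. On each bounded interval the $\gamma_n$ are uniformly Lipschitz, so they also converge weakly in $W^{1,1}([-t,0],M)$, and Lemma \ref{TM} applies.

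To show that $\gamma$ is $u_*$-calibrated, fix $t>0$. For every $n$ the calibration identity reads
\[
u_{\lambda_n}(z)-u_{\lambda_n}\big(\gamma_n(-t)\big)=\int_{-t}^0\Big[L\big(\gamma_n,\dot\gamma_n\big)+c_0-\lambda_n a(\gamma_n)u_{\lambda_n}(\gamma_n)\Big]\,ds .
\]
\begin{itemize}
\item On the left-hand side, $u_{\lambda_n}\to u_*$ uniformly and $\gamma_n(-t)\to\gamma(-t)$, so the left side tends to $u_*(z)-u_*(\gamma(-t))$.
\item The term $\lambda_n a\,u_{\lambda_n}$ is bounded by $\lambda_n\|a\|_\infty K$, where $K$ is the uniform bound from Lemma \ref{s3}. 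Its integral over $[-t,0]$ is therefore at most $\lambda_n\|a\|_\infty K t\to 0$.
\item The Lagrangian $L$ is continuous, convex in $v$, and bounded below (by superlinearity), so Lemma \ref{TM} gives $\int_{-t}^0 [L(\gamma,\dot\gamma)+c_0]\,ds\leqslant \liminf_n \int_{-t}^0[L(\gamma_n,\dot\gamma_n)+c_0]\,ds$.
\end{itemize}
Combining these three facts yields
\[
\int_{-t}^0[L(\gamma,\dot\gamma)+c_0]\,ds\leqslant u_*(z)-u_*\big(\gamma(-t)\big).
\]

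For the reverse inequality, $u_*$ is a solution, and in particular a subsolution, of \eqref{E0} by Proposition \ref{stability}. Proposition \ref{soleq} then gives $u_*\prec L+c_0$, so $u_*(z)-u_*(\gamma(-t))\leqslant\int_{-t}^0[L(\gamma,\dot\gamma)+c_0]\,ds$. The two inequalities give equality for every $t>0$. Applying the same argument with $\gamma_n(-t)$ and $\gamma_n(-t')$ shows calibration on every subinterval $[-t,-t']$, although this also follows from the interval $[-t,0]$ together with domination. Hence $\gamma$ is $(u_*,L,c_0)$-calibrated.

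The only delicate step is the lower semicontinuity of the action, which needs the equi-Lipschitz bound and Lemma \ref{TM}; the rest is routine bookkeeping of the vanishing term $\lambda_n a\,u_{\lambda_n}$ using the uniform bound from Lemma \ref{s3}.
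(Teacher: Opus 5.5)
Your proposal is correct and follows essentially the same route as the paper: Arzel\`a--Ascoli on bounded intervals using Lemma \ref{gameq} with a diagonal extraction, passage to the limit in the calibration identity via the lower semicontinuity of Lemma \ref{TM} while the term $\lambda_n a\,u_{\lambda_n}$ vanishes, and the reverse inequality from $u_*\prec L+c_0$. The paper treats a general subinterval $[s,t]\subset[-T,0]$ directly rather than $[-t,0]$, but your remark that calibration on $[-t,0]$ plus domination gives calibration on every subinterval covers this.
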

\begin{proof}
Fix $T>0$. By the Arzel\'a-Ascoli theorem and Lemma \ref{gameq}, up to a subsequence, $\gamma_n\to \gamma$ uniformly on $[-T,0]$. By Lemma \ref{TM}, for $-T\leqslant s<t\leqslant 0$, we have
\begin{align*}
u_*\big(\gamma(t)\big)-u_*\big(\gamma(s)\big)&=\lim_{n\to+\infty}\Big(u_{\lambda_n}\big(\gamma_n(t)\big)-u_{\lambda_n}\big(\gamma_n(s)\big)\Big)
\\ &=\lim_{n\to+\infty}\int_s^t \Big[L\big(\gamma_n(\tau),\dot\gamma_n(\tau)\big)+c_0-\lambda_n a\big(\gamma_n(\tau)\big)u_{\lambda_n}\big(\gamma_n(\tau)\big)\Big]\, d\tau
\\ &\geqslant \int_s^t \big[L\big(\gamma(\tau),\dot\gamma(\tau)\big)+c_0\big]\, d\tau.
\end{align*}
On the other hand, since $u_*\prec L+c_0$, one obtains the equality. By a diagonal argument, the subsequence can be chosen independently of $T$.
\end{proof}

\begin{lem}\label{u0-u*}
Let $I\subset \R$ and $\gamma:I\to M$ be a $u_*$-calibrated curve. Then the function $t\mapsto (u_0-u_*)\big(\gamma(t)\big)$ is non-increasing.
\end{lem}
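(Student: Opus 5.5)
The argument uses only two facts: $u_0$ is a subsolution of \eqref{E0} (established in the proof of Lemma \ref{u0<A}), and $u_*$ is calibrated along $\gamma$. The key observation is that a subsolution cannot grow along a calibrated curve faster than the calibrated solution does.

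Fix $s<t$ in $I$. By Proposition \ref{soleq}, $u_0\prec L+c_0$. Applying this domination to the restriction $\gamma|_{[s,t]}$, which is absolutely continuous, gives
\[
u_0\big(\gamma(t)\big)-u_0\big(\gamma(s)\big)\leqslant\int_s^t\big[L\big(\gamma(\tau),\dot\gamma(\tau)\big)+c_0\big]\,d\tau .
\]
Since $\gamma$ is $u_*$-calibrated, the right-hand side equals exactly $u_*\big(\gamma(t)\big)-u_*\big(\gamma(s)\big)$. Rearranging yields
\[
(u_0-u_*)\big(\gamma(t)\big)\leqslant(u_0-u_*)\big(\gamma(s)\big),
\]
which is the claimed monotonicity.

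Two technical points need checking, though neither should be a real obstacle.

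\textbf{Finiteness of the action.} The integral along a calibrated curve must be finite. This holds because it equals a difference of values of the continuous function $u_*$.

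\textbf{Meaning of calibration on a general interval $I$.} Calibration should be understood as the identity
\[
u_*\big(\gamma(t)\big)-u_*\big(\gamma(s)\big)=\int_s^t\big[L\big(\gamma(\tau),\dot\gamma(\tau)\big)+c_0\big]\,d\tau
\qquad\text{for all } s<t \text{ in } I.
\]
This is the sense produced by Lemma \ref{ganga} for the limit curves, which is how the lemma will be applied.

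If one prefers not to cite Proposition \ref{soleq} directly for $u_0$, there is an alternative. Each $w\in\mathcal S$ satisfies $w\prec L+c_0$, and this domination property passes to the pointwise supremum $u_0$, which is finite by Lemma \ref{u0<A}.
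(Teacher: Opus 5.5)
Your proposal is correct and follows essentially the same argument as the paper. The paper also applies the subsolution inequality $u_0\prec L+c_0$ along $\gamma$ on $[a,b]$ and subtracts the calibration identity for $u_*$, which gives the monotonicity of $u_0-u_*$ along $\gamma$.
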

\begin{proof}
Fix $a,b\in I$ with $a<b$. Since $u_0$ is a subsolution, we have
\[u_0\big(\gamma(b)\big)-u_0\big(\gamma(a)\big)\leqslant \int_a^b \big[L\big(\gamma(s),\dot\gamma(s)\big)+c_0\big]\, ds.\]
Since $\gamma$ is $u_*$-calibrated,
\[u_*\big(\gamma(b)\big)-u_*\big(\gamma(a)\big)=\int_a^b \big[L\big(\gamma(s),\dot\gamma(s)\big)+c_0\big]\, ds.\]
Subtracting the equality from the inequality, we obtain
\[u_0\big(\gamma(b)\big)-u_*\big(\gamma(b)\big)\leqslant u_0\big(\gamma(a)\big)-u_*(\gamma(a)\big),\]
which completes the proof.
\end{proof}

\begin{lem}\label{gminA}
Let $\gamma:(-\infty,+\infty)\to M$ be a $u_*$-calibrated curve. Then if there is a sequence $t_n\to+\infty$ such that $\gamma(t_n)\to x$, then $x\in \mathcal A$.
\end{lem}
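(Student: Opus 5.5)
The goal is to show $h^\infty(x,x)=0$ by building long closed curves through $x$ with nearly zero action. Since $u_*$ solves \eqref{E0}, it is Lipschitz. A $u_*$-calibrated curve on $\R$ has velocity bounded uniformly in time, by the same superlinearity argument as in Lemma \ref{gameq} (with $\lambda=0$). So $\gamma$ is $\kappa$-Lipschitz for some $\kappa>0$.

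Fix $\varepsilon>0$. Choose indices $m<n$ with $t_n-t_m\geqslant 1$, $t_m$ large, and $d(\gamma(t_m),x)<\varepsilon$, $d(\gamma(t_n),x)<\varepsilon$. Calibration gives
\[
\int_{t_m}^{t_n}\big[L(\gamma,\dot\gamma)+c_0\big]\,ds=u_*\big(\gamma(t_n)\big)-u_*\big(\gamma(t_m)\big),
\]
and the right-hand side tends to $0$ as $m,n\to\infty$ by continuity of $u_*$.

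Next, close the loop. Let $\beta_1$ be a geodesic from $x$ to $\gamma(t_m)$ and $\beta_2$ a geodesic from $\gamma(t_n)$ back to $x$, each parametrized on a time interval of length $\varepsilon$ (or of length $d$ with unit speed). Their action is bounded by $C\varepsilon$, where $C=\max_{\|v\|\leqslant 1}|L+c_0|$, since the speed is at most $1$. Concatenating $\beta_1$, $\gamma|_{[t_m,t_n]}$ and $\beta_2$ gives a closed curve at $x$ of duration $T=t_n-t_m+2\varepsilon$. Hence
\[
h_T(x,x)\leqslant o(1)+2C\varepsilon .
\]

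It remains to pass to the limit. The durations $T$ can be taken arbitrarily large, since we can choose $n$ with $t_n\to\infty$ for fixed $m$, so along such a family we get $\liminf_{T\to\infty}h_T(x,x)\leqslant 2C\varepsilon+o(1)$. The liminf in \eqref{barr} is over all $T\to\infty$, and we only have bounds along a particular sequence of durations; that is exactly what a liminf requires. Therefore $h^\infty(x,x)\leqslant 2C\varepsilon$ for every $\varepsilon$, i.e. $h^\infty(x,x)\leqslant 0$. The reverse inequality $h^\infty(x,x)\geqslant 0$ holds by Proposition \ref{h>w}(ii) applied with any subsolution (take $w=u_*$, $y=x$). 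Hence $h^\infty(x,x)=0$, that is, $x\in\mathcal A$.

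The main subtlety is ensuring that the action of $\gamma$ between $t_m$ and $t_n$ is small for every admissible pair $m<n$, not only for consecutive indices. The calibration identity settles this, since it bounds the action by the oscillation of $u_*$ near $x$. With that in hand, the rest is routine.
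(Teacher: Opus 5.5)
Your proof is correct and is essentially the paper's argument: you close the calibrated arc $\gamma|_{[t_m,t_n]}$ with two short geodesics through $x$, bound its action by $u_*(\gamma(t_n))-u_*(\gamma(t_m))$ using calibration, and pass to the $\liminf$ of $h_T(x,x)$ along the diverging durations. The paper does the same with consecutive terms of a subsequence satisfying $t_{n+1}-t_n\to+\infty$, instead of a fixed $\varepsilon$ followed by $\varepsilon\to0$. Two cosmetic points: the uniform velocity bound on $\gamma$ is never used, and with $m$ fixed the term you call $o(1)$ is really the oscillation of $u_*$ on the $\varepsilon$-ball around $x$, which still vanishes as $\varepsilon\to0$.
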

\begin{proof}
Assume $t_n\to+\infty$ such that $\gamma(t_n)\to x$. Up to a subsequence, one may assume $t_{n+1}-t_n\to+\infty$. We denote $d_n:=d\big(x,\gamma(t_n)\big)$. Let $\alpha_1:[0,d_n]\to M$ and $\alpha_2:[0,d_{n+1}]\to M$ be two geodesics with constant speed $1$ and $\alpha_1(0)=x$, $\alpha_1(d_n)=\gamma(t_n)$, $\alpha_2(0)=\gamma(t_{n+1})$, and $\alpha_2(d_{n+1})=x$. Define
\begin{equation*}
\gamma_n(s):=
\begin{cases}
\alpha_1(s) &\text{for} \ 0\leqslant s\leqslant d_n,
\\ \gamma(s+t_n-d_n) & \text{for} \ d_n\leqslant s\leqslant t_{n+1}-t_n+d_n,
\\ \alpha_2(s-t_{n+1}+t_n-d_n) &\text{for} \ t_{n+1}-t_n+d_n\leqslant s\le t_{n+1}-t_n+d_n+d_{n+1}.
\end{cases}
\end{equation*}
Then
\begin{align*}
&h_{t_{n+1}-t_n+d_n+d_{n+1}}(x,x)\leqslant \int_0^{t_{n+1}-t_n+d_n+d_{n+1}}\big[L\big(\gamma_n(s),\dot\gamma_n(s)\big)+c_0\big]\, ds
\\ &=\int_0^{d_n}\big[L\big(\alpha_1(s),\dot\alpha_1(s)\big)+c_0\big]\, ds
\\ &\qquad +\int_{t_n}^{t_{n+1}}\big[L\big(\gamma(s),\dot\gamma(s)\big)+c_0\big]\, ds+\int_0^{d_{n+1}}\big[L\big(\alpha_2(s),\dot\alpha_2(s)\big)+c_0\big]\, ds
\\ &\leqslant \max_{x\in M,\ \|v\|_x\leqslant 1}|L(x,v)+c_0|(d_n+d_{n+1})+u_*\big(\gamma(t_{n+1})\big)-u_*\big(\gamma(t_n)\big).
\end{align*}
Since $d_n\to 0$ and $u_*$ is continuous, the right hand side of the above inequality tends to zero as $n\to +\infty$. Hence,
\[h^\infty(x,x)\leqslant \liminf_{n\to+\infty}h_{t_{n+1}-t_n+d_n+d_{n+1}}(x,x)\leqslant 0,\]
which implies that $x\in\mathcal A$ since $h^\infty(x,x)\geqslant 0$.
\end{proof}

Assume that $u_*\neq u_0$. Then there exists $x\in M$ such that
\[m:=u_0(x)-u_*(x)>0.\]
Take $r\in (0,m)$. By continuity, the set
\[V_r=\{y\in M\mid u_0(y)-u_*(y)>r\}\]
is an open neighborhood of $x$. Moreover,
\[u_0(z)-u_*(z)=r\quad \text{for all }z\in \partial V_r.\]
Let $\gamma^x_\lambda$ be as defined before, and we define $\gamma_n:=\gamma^x_{\lambda_n}$. For each $\lambda_n$, we define
\[\ell_n:=\max\{s\in (-\infty,0]\mid \gamma_n(s)\in \partial V_r\}.\]
If $\gamma_n\big((-\infty,0]\big)\cap \partial V_r=\emptyset$, we set $\ell_n=-\infty$. Since $\gamma_n(0)=x\in V_r$, we have $\ell_n<0$ and $\gamma_n\big((\ell_n,0]\big)\subset V_r$. Moreover, if $\ell_n>-\infty$, then $\gamma_n(\ell_n)\in \partial V_r$.

\medskip

\noindent {\bf Claim 1.} For $\lambda _n$ small enough, we have $\ell_n>-\infty$.

\medskip

We argue by contradiction. Assume that, up to a subsequence, $\ell_n=-\infty$, that is, $\gamma_n\big((-\infty,0]\big)\subset V_r$. We define the limit measure $\tilde\mu_x$ associated with $\gamma_n$ as in Definition \ref{defmu}. By Lemma \ref{auge}, we have
\begin{equation}\label{u*>}
\int_{TM}a(x)u_*(x)\, d\tilde\mu_x\geqslant 0.
\end{equation}
By Lemma \ref{mu*>0},
\[\int_{TM}a(x)\, d\tilde\mu_x\geqslant 0.\]
Since $\gamma_n\big((-\infty,0]\big)\subset V_r$, we have $\supp(\mu_x)\subset\overline V_r$.
Consequently, any extremal decomposition of $\tilde\mu_x$
is concentrated on extremal Mather measures whose projected supports
are contained in $\overline V_r$. By assumption (A) and Lemma \ref{extm}, there exists an extremal Mather measure $\tilde\mu\in \widetilde{\mathfrak M}_1$ with $\supp(\mu)\subset \ol{V}_r$ in the extremal decomposition of $\tilde\mu_x$. Since $u_0-u_*\geqslant r$ and $a(x)>0$ on $\supp(\mu)$, we get
\[\int_{TM}a(x)\big(u_0(x)-u_*(x)\big)\, d\tilde\mu\geqslant r\int_{TM}a(x)\, d\tilde\mu\geqslant r\min_{x\in \supp(\mu)}a(x).\]
By \eqref{u*>}, the extremal decomposition of $\tilde\mu_x$, and the fact that
\[\int_{TM}a(x)u_*(x)\, d\tilde\nu\leqslant 0\]
for all Mather measures $\tilde\nu$, the extremal Mather measure $\tilde\mu$ must satisfy
\[\int_{TM}a(x)u_*(x)\, d\tilde\mu = 0.\]
Hence,
\[\int_{TM}a(x)u_0(x)\, d\tilde\mu\geqslant r\min_{x\in \supp(\mu)}a(x)>0,\]
which contradicts Lemma \ref{u0<A}.

\medskip

\noindent {\bf Claim 2.} $\ell_n \to -\infty$ as $\lambda_n \to 0$.

\medskip

We argue by contradiction. Assume that, up to a subsequence, $\ell_n$ is bounded away from $-\infty$. Extracting further, we can assume $\ell_n\to \ell_*$. By Lemma \ref{ganga}, we know that $\gamma_n \to\gamma$ locally uniformly, and that $\gamma$ is a $u_*$-calibrated curve. Moreover, $\gamma(\ell_*)\in \partial V_r$. By Lemma \ref{u0-u*},
\[r=u_0\big(\gamma(\ell_*)\big)-u_*\big(\gamma(\ell_*)\big)\geqslant u_0\big(\gamma(0)\big)-u_*\big(\gamma(0)\big),\]
which contradicts $r<m$.

\medskip

By Claim 1, up to a subsequence, $\ell_n>-\infty$. We define $\eta_n:(-\infty,-\ell_n]\to M$ as
\[\eta_n(t)=\gamma_n(t+\ell_n),\quad t\in (-\infty,-\ell_n].\]
By Claim 2 and Lemma \ref{ganga}, by a diagonal extraction, up to a subsequence, $\eta_n \to \eta$ locally uniformly, where $\eta:(-\infty, +\infty)\to M$ is a $u_*$-calibrated curve. Moreover, since $\eta_n(0)\in \partial V_r$ and $\eta_n([0,-\ell_n])\subset \ol V_r$, we have
\[\eta(0)\in \partial V_r,\quad \eta\big([0,+\infty)\big)\subset \ol V_r.\]
Since $u_0-u_*$ is non-increasing along $\eta$, we have
\[u_0\big(\eta(t)\big)-u_*\big(\eta(t)\big)=r\quad \text{for all }t>0.\]
By Lemma \ref{gminA}, we have $r\in (u_0-u_*)(\mathcal A)$.
\begin{lem}
For any two subsolutions $v_1,v_2$ of \eqref{E0} and every $x,y\in M$, we have
\[|(v_1-v_2)(y)-(v_1-v_2)(x)|\leqslant d_H(x,y),\]
that is, $v_1-v_2$ defines a Lipschitz continuous function on the Mather quotient with respect to $d_H$, with Lipschitz constant $\leqslant 1$.
\end{lem}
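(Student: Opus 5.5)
The plan is to use Proposition \ref{h>w} (ii) for each subsolution separately and then combine the two resulting bounds. Write $f:=v_1-v_2$. First I would note that any subsolution $w$ of \eqref{E0} satisfies
\[
w(y)-w(x)\leqslant h^\infty(x,y)\qquad\text{for all }x,y\in M.
\]
The key step is to produce the inequality
\[
f(y)-f(x)\leqslant h^\infty(x,y)+h^\infty(y,x)=d_H(x,y)
\]
and then exchange $x$ and $y$.

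To get this inequality, I would split the difference as
\[
f(y)-f(x)=\big(v_1(y)-v_1(x)\big)+\big(v_2(x)-v_2(y)\big).
\]
Applying Proposition \ref{h>w} (ii) to $v_1$ on the pair $(x,y)$ gives
\[
v_1(y)-v_1(x)\leqslant h^\infty(x,y).
\]
Applying it to $v_2$ on the reversed pair $(y,x)$ gives
\[
v_2(x)-v_2(y)\leqslant h^\infty(y,x).
\]
Adding the two bounds yields $f(y)-f(x)\leqslant d_H(x,y)$. Since $d_H$ is symmetric, swapping the roles of $x$ and $y$ gives $f(x)-f(y)\leqslant d_H(x,y)$. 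Together these give $|f(y)-f(x)|\leqslant d_H(x,y)$ for all $x,y\in M$.

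For the quotient statement, I would restrict to $x,y\in\mathcal A$ with $x\sim y$. Then $d_H(x,y)=0$, so $f(x)=f(y)$. Hence $f$ is constant on each static class and descends to a well-defined function on $\mathcal Q$. The inequality just proved says this induced function is $1$-Lipschitz with respect to $d_H$.

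I expect no real obstacle. The only point to check is that Proposition \ref{h>w} (ii) holds for arbitrary continuous viscosity subsolutions, with no extra regularity needed. This is how the paper states it, and it follows from Proposition \ref{soleq}, since a subsolution satisfies $w\prec L+c_0$; minimizing over curves and taking $\liminf_{t\to+\infty}$ then gives the bound by $h^\infty$.
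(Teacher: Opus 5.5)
Your proposal is correct and follows the paper's proof exactly: apply Proposition \ref{h>w}(ii) to $v_1$ on the pair $(x,y)$ and to $v_2$ on $(y,x)$, add the two bounds, and use the symmetry of $d_H$. Your extra remarks are also right: $f$ descends to a well-defined function on $\mathcal Q$, and the bound $w(y)-w(x)\leqslant h_t(x,y)$ for every $t>0$, followed by taking the $\liminf$, justifies (ii) for arbitrary continuous subsolutions.
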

\begin{proof}
By Proposition \ref{h>w}, we have
\[v_1(y)-v_1(x)\leqslant h^\infty(x,y),\]
and
\[v_2(x)-v_2(y)\leqslant h^\infty(y,x).\]
Adding and rearranging yields
\[(v_1-v_2)(y)-(v_1-v_2)(x)\leqslant h^\infty(x,y)+h^\infty(y,x).\]
Since $d_H$ is symmetric, we get
\[|(v_1-v_2)(y)-(v_1-v_2)(x)|\leqslant d_H(x,y).\]
\end{proof}

Now, we have proved that $(0,m)\subset (u_0-u_*)(\mathcal A)$, and $u_0-u_*$ defines a Lipschitz continuous function on the Mather quotient. Therefore, if the Mather quotient set has 1-dimensional Hausdorff measure zero, we have $m=0$, i.e., $u_0=u_*$. This finishes the proof of the convergence.

\medskip

We define
\[\hat u_0(x):=\inf_{\tilde{\mu}\in\widetilde{\mathfrak M}_1}\frac{\int_{TM} h^\infty(y,x)a(y)\, d\tilde\mu(y,v)}{\int_{TM} a(y)\, d\tilde\mu(y,v)},\qquad x\in M.\]
\begin{lem}
$u_0=\hat u_0$.
\end{lem}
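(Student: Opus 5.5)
We prove the two inequalities $\hat u_0\leqslant u_0$ and $u_0\leqslant\hat u_0$ separately. Throughout, for $\tilde\mu\in\widetilde{\mathfrak M}_1$ we write $D(\tilde\mu):=\int_{TM}a\,d\tilde\mu$. Since $a>0$ on $\mathcal A_1\supset\supp(\mu)$, we have $D(\tilde\mu)>0$, so the quotient defining $\hat u_0$ makes sense.

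\emph{Step 1: $u_0\leqslant\hat u_0$.} Let $w\in\mathcal S$, $x\in M$ and $\tilde\mu\in\widetilde{\mathfrak M}_1$. By Proposition \ref{h>w}(ii), $w(x)\leqslant w(y)+h^\infty(y,x)$ for every $y$. Multiply this inequality by $a(y)>0$ (recall $y\in\supp(\mu)\subset\mathcal A_1$) and integrate against $\tilde\mu$:
\[
w(x)\,D(\tilde\mu)\leqslant\int_{TM}a(y)w(y)\,d\tilde\mu+\int_{TM}h^\infty(y,x)a(y)\,d\tilde\mu\leqslant\int_{TM}h^\infty(y,x)a(y)\,d\tilde\mu,
\]
where the last inequality uses the constraint defining $\mathcal S$. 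Dividing by $D(\tilde\mu)$, taking the infimum over $\tilde\mu$ and then the supremum over $w\in\mathcal S$ gives $u_0\leqslant\hat u_0$.

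\emph{Step 2: $\hat u_0\leqslant u_0$.} Here the plan is to show that $\hat u_0\in\mathcal S$.
\begin{enumerate}
\item \textbf{$\hat u_0$ is a critical subsolution.} For fixed $\tilde\mu$, the map
\[
x\mapsto\frac{\int_{TM}h^\infty(y,x)a(y)\,d\tilde\mu}{D(\tilde\mu)}
\]
is a convex combination, with positive weight $a\,d\mu/D(\tilde\mu)$, of the solutions $h^\infty(y,\cdot)$ (Proposition \ref{h>w}(iv)). By convexity of $H$ and the characterization in Proposition \ref{soleq}(iii), it is a subsolution, and these functions are equi-Lipschitz. An infimum of subsolutions is not a subsolution in general. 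However, since each function is equi-Lipschitz with $H(x,Du)\leqslant c_0$ a.e., the infimum satisfies the path inequality $u\prec L+c_0$: the infimum over $\tilde\mu$ of $u_{\tilde\mu}(\gamma(t))\leqslant u_{\tilde\mu}(\gamma(t'))+\int(L+c_0)$ passes to the infimum. Hence $\hat u_0$ is a subsolution by Proposition \ref{soleq}(i). It is finite because $h^\infty$ is bounded.
\item \textbf{The constraint holds.} We must show $\int a\hat u_0\,d\tilde\nu\leqslant0$ for all $\tilde\nu\in\widetilde{\mathfrak M}_1$. By linearity and extremal decomposition (every extremal component lies in $\widetilde{\mathfrak M}_1$), it suffices to treat extremal $\tilde\nu$. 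Use $\hat u_0(x)\leqslant D(\tilde\nu)^{-1}\int h^\infty(y,x)a(y)\,d\tilde\nu(y)$ and integrate in $x$ against $a\,d\nu$. This yields the double integral
\[
\int\!\!\int h^\infty(y,x)\,a(y)a(x)\,d\nu(y)\,d\nu(x).
\]
\item \textbf{Vanishing of the double integral.} This is the main obstacle. I would invoke Lemma \ref{w1w2} with $w_1=h^\infty(y,\cdot)$ and $w_2=-h^\infty(\cdot,y)$, which are both subsolutions by Proposition \ref{h>w}(iv). Their difference $d_H(y,\cdot)$ is then constant on $\supp(\nu)$, and it equals $0$ at $x=y$ whenever $y\in\supp(\nu)\subset\mathcal A$. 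Therefore $h^\infty(y,x)=-h^\infty(x,y)$ on $\supp(\nu)\times\supp(\nu)$. Since $a(x)a(y)$ is symmetric, the double integral is antisymmetric under exchanging $x$ and $y$, so it vanishes. Dividing by $D(\tilde\nu)>0$ gives $\int a\hat u_0\,d\tilde\nu\leqslant0$.
\end{enumerate}
Thus $\hat u_0\in\mathcal S$, and hence $\hat u_0\leqslant u_0$. Together with Step 1 this proves $u_0=\hat u_0$.
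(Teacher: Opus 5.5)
Your proof is correct, but you reach $\hat u_0\leqslant u_0$ by a different route from the paper.

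\textbf{The inequality $u_0\leqslant\hat u_0$.} The paper integrates $u_0(x)\leqslant u_0(y)+h^\infty(y,x)$ against $a\,d\tilde\mu$ and invokes Lemma \ref{u0<A}. You run the same computation for each $w\in\mathcal S$ before taking the supremum. This is equivalent and avoids needing to know that $u_0\in\mathcal S$.

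\textbf{The inequality $\hat u_0\leqslant u_0$.} The paper never shows that $\hat u_0\in\mathcal S$. For $z\in\mathcal A$ it uses the test functions $U_z=-h^\infty(\cdot,z)+\hat u_0(z)$, which lie in $\mathcal S$ by a one-line computation from the definition of $\hat u_0(z)$. This gives $u_0\geqslant\hat u_0$ on $\mathcal A$. The paper then extends the inequality to $M$ by Proposition \ref{prop AC}, using that $\hat u_0$ is a subsolution and $u_0$ is a solution.

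\textbf{What each route buys.} The paper's step is lighter: it needs no extremal decomposition and no Lemma \ref{w1w2}. However, it requires $u_0$ to be a viscosity solution. In the paper this is known only because $u_0=u_*$ is a limit of the $u_\lambda$, so it leans on the convergence argument and hence on (S) and ($\diamond$).

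Your route shows that $\hat u_0$ itself belongs to $\mathcal S$. So the supremum defining $u_0$ is attained, and the representation formula holds at the level of $\mathcal S$ alone, independently of the convergence part of Theorem \ref{thm1}. The price is Lemma \ref{w1w2}, which places the support of an extremal Mather measure inside a single static class, where $h^\infty$ is antisymmetric.

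Your reduction to extremal measures is genuinely needed. For non-extremal $\tilde\nu$, symmetrizing gives
$\iint h^\infty(y,x)a(x)a(y)\,d\nu\,d\nu=\tfrac12\iint d_H(x,y)a(x)a(y)\,d\nu\,d\nu\geqslant0$,
which can be strictly positive when $\supp(\nu)$ meets several static classes.

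\textbf{A small simplification.} For the subsolution property of each quotient $F_{\tilde\mu}$, you invoke the a.e.\ characterization in Proposition \ref{soleq}(iii), which requires differentiating under the integral sign. It is simpler to integrate the path inequality for $h^\infty(y,\cdot)$ against the probability $a\,d\tilde\mu/D(\tilde\mu)$, exactly as you already do for the infimum.
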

\begin{proof}
We first note that $\hat u_0$ is a subsolution of \eqref{E0}.
For each $\tilde\mu\in\widetilde{\mathfrak M}_1$, set
\[
F_{\tilde\mu}(x):=
\frac{\int_{TM}h^\infty(y,x)a(y)\,d\tilde\mu(y,v)}
{\int_{TM}a(y)\,d\tilde\mu(y,v)}.
\]
Since $a>0$ on $\supp(\mu)$, we can write
\[
F_{\tilde\mu}(x)
=
\int_{TM}h^\infty(y,x)\,d\rho_{\tilde\mu}(y,v)
\]
where
\[
d\rho_{\tilde\mu}(y,v)
:=
\frac{a(y)}{\int_{TM}a\,d\tilde\mu}\,d\tilde\mu(y,v)
\]
is a probability measure. By Proposition \ref{h>w}, for every $y\in M$,
$h^\infty(y,\cdot)$ is a subsolution of \eqref{E0}. Hence, by the
convexity of $H$ and \cite[Proposition 2.1(iii)]{Da4},
$F_{\tilde\mu}$ is a subsolution of \eqref{E0}.
Moreover, the family
$\{F_{\tilde\mu}\}_{\tilde\mu\in\widetilde{\mathfrak M}_1}$
is equi-Lipschitz, since $h^\infty$ is Lipschitz continuous.
Therefore, by \cite[Proposition 2.1(ii)]{Da4},
\[
\hat u_0=\inf_{\tilde\mu\in\widetilde{\mathfrak M}_1}F_{\tilde\mu}
\]
is a subsolution of \eqref{E0}.

Let us now prove that $u_0\leqslant  \hat u_0$. By Proposition \ref{h>w}, we know that 
\[u_0(x)\leqslant  u_0(y)+h^\infty(y,x)\quad\text{for all }x,y\in M.\]
Let us integrate this inequality with respect to 
$\tilde{\mu}\in\widetilde{\mathfrak M}_1$. We infer 
\begin{align*}
u_0(x)\int_{TM}a(y)\,d\tilde\mu(y,v)
&\leqslant  \int_{TM}u_0(y)a(y)\,d\tilde\mu(y,v)+\int_{TM}h^\infty(y,x)a(y)\,d\tilde\mu(y,v)
\\ &\leqslant \int_{TM}h^\infty(y,x)a(y)\,d\tilde\mu(y,v),
\end{align*}
where, for the last inequality, we used Lemma \ref{u0<A}. From this we get 
\[
u_0(x)
\leqslant 
\dfrac{\int_{TM}h^\infty(y,x)a(y)\,d\tilde\mu(y,v)}{\int_{TM}a(y)\,d\tilde\mu(y,v)}
\qquad
\hbox{for all $x\in M$.}
\]
By taking the inf with respect to $\tilde\mu\in{\widetilde{\mathfrak M}_1}$ of the right-hand side term in the above inequality, we get $u_0\leqslant  \hat u_0$. 

Finally, we prove that $u_0\geqslant  \hat u_0$. For every fixed $z\in \mathcal A$, set 
\[
U_{z}(x):=-h^\infty(x,z)+\hat u_0(z), \qquad x\in M.
\]
Then $U_z$ is a subsolution of (\ref{E0}), by Proposition \ref{h>w}. Furthermore, for every $x\in M$ and 
$\tilde\mu\in\widetilde{\mathfrak M}_1$, we have 
\[
\frac{\int_{TM} U_z(x)a(x)\, d\tilde\mu(x,v)}{\int_{TM} a(x)\, d\tilde\mu(x,v)}
=
-\frac{\int_{TM} h^\infty(x,z)a(x)\, d\tilde\mu(x,v)}{\int_{TM} a(x)\, d\tilde\mu(x,v)}
+
\hat u_0(z)
\leqslant  0,
\]
which implies that $U_z\in\mathcal S$. Then $u_0(x)\geqslant   U_z(x)$ for all $x\in M$. In particular, by taking $x=z\in \mathcal A$, we get 
\[
u_0(z)\geqslant  U_z(z)=-h^\infty(z,z)+\hat u_0(z)=\hat u_0(z).
\]
We have thus shown that $u_0\geqslant  \hat u_0$ on $\mathcal A$.
Since $\hat u_0$ is a subsolution of \eqref{E0} and $u_0$ is a solution,
Proposition \ref{prop AC} yields $u_0\geqslant  \hat u_0$ on $M$.
Therefore, $u_0=\hat u_0$.
\end{proof}

\begin{lem}\label{h=w}
Let $w$ be a subsolution of \eqref{E0}. Let $x_1,x_2\in M$. If $d_H(x_1,x_2)=0$, we have $h^\infty(x_1,x_2)=w(x_2)-w(x_1)$.
\end{lem}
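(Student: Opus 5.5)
The argument is a sandwich using Proposition \ref{h>w}(ii) and (iii) together with $d_H(x_1,x_2)=0$, i.e. $h^\infty(x_1,x_2)=-h^\infty(x_2,x_1)$.

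\textbf{Upper bound.} Proposition \ref{h>w}(ii) applied to the subsolution $w$ gives directly
\[
w(x_2)-w(x_1)\leqslant h^\infty(x_1,x_2).
\]

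\textbf{Lower bound.} Applying Proposition \ref{h>w}(ii) with the roles of the points exchanged gives
\[
w(x_1)-w(x_2)\leqslant h^\infty(x_2,x_1).
\]
Since $d_H(x_1,x_2)=0$ means $h^\infty(x_2,x_1)=-h^\infty(x_1,x_2)$, this becomes $w(x_2)-w(x_1)\geqslant h^\infty(x_1,x_2)$.

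Combining the two bounds yields the equality $h^\infty(x_1,x_2)=w(x_2)-w(x_1)$.

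\textbf{Role of the hypotheses.} The argument does not require $x_1,x_2\in\mathcal A$. The definition of static classes is phrased on $\mathcal A$, but the hypothesis $d_H(x_1,x_2)=0$ is all that is used. (By (iii), $h^\infty(x_1,x_1)\leqslant d_H(x_1,x_2)=0$, so in fact $x_1\in\mathcal A$, but this is not needed.)

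There is essentially no obstacle here. The only point to check is that Proposition \ref{h>w}(ii) holds for arbitrary pairs of points and arbitrary (not necessarily solution) subsolutions $w$ of \eqref{E0}, which is exactly how it is stated.
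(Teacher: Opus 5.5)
Your proof is correct and is essentially the paper's argument. The paper argues by contradiction: it assumes $h^\infty(x_1,x_2)>w(x_2)-w(x_1)$ and adds $h^\infty(x_2,x_1)$ to obtain a violation of Proposition \ref{h>w}(ii) at the swapped pair, which is your direct lower-bound step written in contrapositive form.
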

\begin{proof}
By Proposition \ref{h>w} we know that for a subsolution $w$ of \eqref{E0},
\[h^\infty(x_1,x_2)\geqslant w(x_2)-w(x_1).\]
Assume that
\[h^\infty(x_1,x_2)>w(x_2)-w(x_1).\]
Adding $h^\infty(x_2,x_1)$ to both sides of the above inequality, we obtain
\[0=h^\infty(x_2,x_1)+h^\infty(x_1,x_2)>h^\infty(x_2,x_1)+w(x_2)-w(x_1),\]
which implies
\[w(x_1)-w(x_2)>h^\infty(x_2,x_1).\]
This leads to a contradiction.
\end{proof}

\begin{cor}\label{h-h=c}
By Proposition \ref{h>w}, $-h^\infty(\cdot,x)$ is a subsolution of \eqref{E0}. If $d_H(x_1,x_2)=0$, then by Lemma \ref{h=w},
\[h^\infty(x_1,x_2)=-h^\infty(x_2,x)-\big(-h^\infty(x_1,x)\big),\]
that is, $h^\infty(x_2,x)$ and $h^\infty(x_1,x)$ differ by a constant $h^\infty(x_1,x_2)$.
\end{cor}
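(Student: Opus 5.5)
The corollary is a one-line consequence of Lemma \ref{h=w}, applied to a particular choice of subsolution. The statement asserts that for $x_1,x_2$ with $d_H(x_1,x_2)=0$, the functions $h^\infty(x_1,\cdot)$ and $h^\infty(x_2,\cdot)$ differ by the constant $h^\infty(x_1,x_2)$.

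Fix an arbitrary $x\in M$ and take $w:=-h^\infty(\cdot,x)$. By Proposition \ref{h>w}(iv), $w$ is a viscosity subsolution of \eqref{E0}, so Lemma \ref{h=w} applies to it. It gives
\[
h^\infty(x_1,x_2)=w(x_2)-w(x_1)=-h^\infty(x_2,x)+h^\infty(x_1,x).
\]
Rearranging yields $h^\infty(x_1,x)=h^\infty(x_2,x)+h^\infty(x_1,x_2)$. Since $x$ was arbitrary and the right-hand constant does not depend on $x$, this is the claim.

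There is no real obstacle. The only point to check is that $-h^\infty(\cdot,x)$ is a subsolution for every $x\in M$, not merely for $x\in\mathcal A$; Proposition \ref{h>w}(iv) states exactly this for each $y\in M$.

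As a consistency check, the identity can also be read off from the triangle inequality of Proposition \ref{h>w}(iii). The chains $x_1\to x_2\to x$ and $x_2\to x_1\to x$ give two opposite inequalities, and these coincide because $h^\infty(x_1,x_2)+h^\infty(x_2,x_1)=0$. I would nevertheless present the argument through Lemma \ref{h=w}, as above.

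Combined with the observation that $h^\infty(x_0,\cdot)$ is the elementary solution associated with $x_0$, this shows that, up to an additive constant, the elementary solution depends only on the static class $\mathcal C$ containing $x_0$. This is the form used in Corollary \ref{cor1}.
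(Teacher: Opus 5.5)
Your proposal is correct and follows exactly the paper's argument. You apply Lemma~\ref{h=w} to the subsolution $w=-h^\infty(\cdot,x)$ supplied by Proposition~\ref{h>w}(iv), which holds for every $x\in M$, and your triangle-inequality check via Proposition~\ref{h>w}(iii) is also a valid self-contained proof, since the two opposite inequalities match precisely because $h^\infty(x_1,x_2)+h^\infty(x_2,x_1)=d_H(x_1,x_2)=0$.
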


\noindent{\it Proof of Corollary \ref{cor1}.} For each $x\in M$, the map $y\mapsto-h^\infty(y,x)$ is a subsolution of \eqref{E0}. Taking $x_0,y\in\mathcal C$, it follows from Lemma \ref{h=w} that
\[-h^\infty(x_0,x)-\big(-h^\infty(y,x)\big)=h^\infty(y,x_0).\]
Therefore, by Theorem \ref{thm1}, the maximal solution $u_\lambda$ of \eqref{E} converges uniformly to
\begin{align*}u_0(x)&=\inf_{\tilde\mu\in \widetilde{\mathfrak M}_{\mathcal C}}
\frac{\int_{TM} a(y)h^\infty(y,x)\,d\tilde\mu(y,v)}{\int_{TM} a(y)\,d\tilde\mu(y,v)}
\\&=\inf_{\tilde\mu\in \widetilde{\mathfrak M}_{\mathcal C}}
\frac{\int_{TM} a(y)\big(h^\infty(y,x_0)+h^\infty(x_0,x)\big)\,d\tilde\mu(y,v)}{\int_{TM} a(y)\,d\tilde\mu(y,v)}
\\&=C_{\mathcal C}+h^\infty(x_0,x).
\end{align*}
\qed

\section{Examples}

Examples 1 and 2 exhibit classes of problems for which hypothesis
{\rm (S)} holds for the original equation, while Example 3 shows
that this is not true in general.

\subsection{Example 1}\label{ex1}
Let $U\in C^\infty(\R)$ be a skew periodic function, i.e., there exists a constant $b\in\R$ such that $U(x)=\ol U(x)+bx$, where $\ol U\in C^\infty(\R)$ is a $1$-periodic function. Moreover, following \cite[Section 3]{GL}, we assume that $U(x)$ has finitely many critical points in $[0,1)$ indexed as follows. Assume there are $k$ (stable) local minima $x_1,x_2,\dots,x_k$ of $U(x)$, interleaved by $k$ (unstable) local maxima $x_{\frac{1}{2}},x_{1+\frac{1}{2}},\dots,x_{k+\frac{1}{2}}=x_{\frac{1}{2}}+1$. According to \cite[Lemma 4.1]{GL}, the projected Aubry set of 
\begin{equation}\label{e12}
u'(x)\bigl(u'(x)-U'(x)\bigr)=0
  \quad \textrm{in}\quad \mathbb S^1\simeq [0,1),
\end{equation}
is $\mathcal A=\{x_i,x_{i+\frac{1}{2}}\}_{i\in\{1,\dots,k\}}$. Note that in this case, Mather measures are convex combinations of Dirac measures $\delta_{(x_i,0)}$ or $\delta_{(x_{i+\frac12},0)}$.  Consider
\begin{equation}\label{e11}
  \lambda a(x)u(x)+u'(x)\bigl(u'(x)-U'(x)\bigr)=0
  \quad \textrm{in}\quad \mathbb S^1\simeq [0,1),
\end{equation}
We take $\emptyset\neq \{y_j\}_{j\in J}\subsetneq\mathcal A$ and let $a(y_j)>0$ for all $j\in J$ and $a(x)<0$ for all $x\in\mathcal A\setminus \{y_j\}_{j\in J}$. 
Then condition {\rm (A)} is satisfied. If hypothesis {\rm (S)} holds, by Theorem \ref{thm1}, the maximal solution of \eqref{e11}
uniformly converges to
\[
u_0(x)=\sup_{w\in\mathcal S}w(x),
\]
where
\[
\mathcal S=\big\{w\mid w\text{ is a subsolution of }\eqref{e12}
\text{ with }a(y_j)w(y_j)\leqslant0
\text{ for all }j\in J\big\}.
\]
Let $h^\infty(\cdot,\cdot)$ be the Peierls barrier of \eqref{e12}.
Since the Mather measures supported on
$\mathcal A_1=\{y_j\}_{j\in J}$ are convex combinations of the
Dirac Mather measures supported at the points $y_j$,
Theorem \ref{thm1} yields
\[
u_0(x)
=
\inf_{\substack{\theta_j\geqslant0\\
\sum_{j\in J}\theta_j=1}}
\frac{\displaystyle\sum_{j\in J}
\theta_j a(y_j)h^\infty(y_j,x)}
{\displaystyle\sum_{j\in J}\theta_j a(y_j)}
=
\min_{j\in J}h^\infty(y_j,x).
\]
Indeed, the quotient in the first line is a convex combination of
$h^\infty(y_j,x)$ with weights proportional to $\theta_j a(y_j)$.

We next give a large class of choices for which hypothesis {\rm (S)}
is automatically satisfied. Assume that $\emptyset\neq\mathcal A_1\subsetneq\mathcal A$
satisfies
\begin{equation}\label{eq:comp}
x_{i+\frac12}\in\mathcal A_1
\quad\Longrightarrow\quad
x_i,x_{i+1}\in\mathcal A_1,
\end{equation}
and let
\[
a>0\quad\text{on }\mathcal A_1,
\qquad
a<0\quad\text{on }\mathcal A\setminus\mathcal A_1.
\]
Choose $\varepsilon>0$ so small that
\[
4\varepsilon<
\min_i\Big\{
U(x_{i+\frac12})-U(x_i),\,
U(x_{i+\frac12})-U(x_{i+1})
\Big\}.
\]
Prescribe
\[
v(x_i)=
\begin{cases}
-2\varepsilon,&x_i\in\mathcal A_1,\\
\varepsilon,&x_i\notin\mathcal A_1,
\end{cases}
\qquad
v(x_{i+\frac12})=
\begin{cases}
-\varepsilon,&x_{i+\frac12}\in\mathcal A_1,\\
2\varepsilon,&x_{i+\frac12}\notin\mathcal A_1.
\end{cases}
\]
Condition \eqref{eq:comp} implies that, on every interval between two consecutive critical
points, the prescribed increment of $v$ has the same sign as that of
$U$ and has strictly smaller absolute value. Hence $v$ can be extended
to a periodic $C^1$ function satisfying
\[
v'=\theta_i U',
\qquad 0<\theta_i<1,
\]
on each such interval. Therefore
\[
v'(v'-U')
=-\theta_i(1-\theta_i)|U'|^2<0
\]
away from the critical points.

At every $x\in\mathcal A$ we have $a(x)v(x)<0$. Since $\mathcal A$ is
finite, there exist a neighborhood $N$ of $\mathcal A$ and a constant
$\kappa>0$ such that
\[
a(x)v(x)\leqslant-\kappa
\qquad\text{on }N.
\]
Moreover, since $v'(v'-U')<0$ on the compact set
$\mathbb S^1\setminus N$, there exists $\eta>0$ such that
\[
v'(v'-U')\leqslant-\eta
\qquad\text{on }\mathbb S^1\setminus N.
\]
Thus, for all sufficiently small $\lambda>0$,
\[
\lambda a(x)v(x)+v'(x)\bigl(v'(x)-U'(x)\bigr)
\leqslant
\begin{cases}
-\lambda\kappa,&x\in N,\\[1mm]
-\eta/2,&x\in\mathbb S^1\setminus N.
\end{cases}
\]
Hence $v$ is a strict subsolution of \eqref{e11} for every sufficiently
small $\lambda>0$. Since $v\geqslant-2\varepsilon$, hypothesis
{\rm (S)} is satisfied.

\begin{figure}[htbp]
    \centering
    \includegraphics[width=1\textwidth]{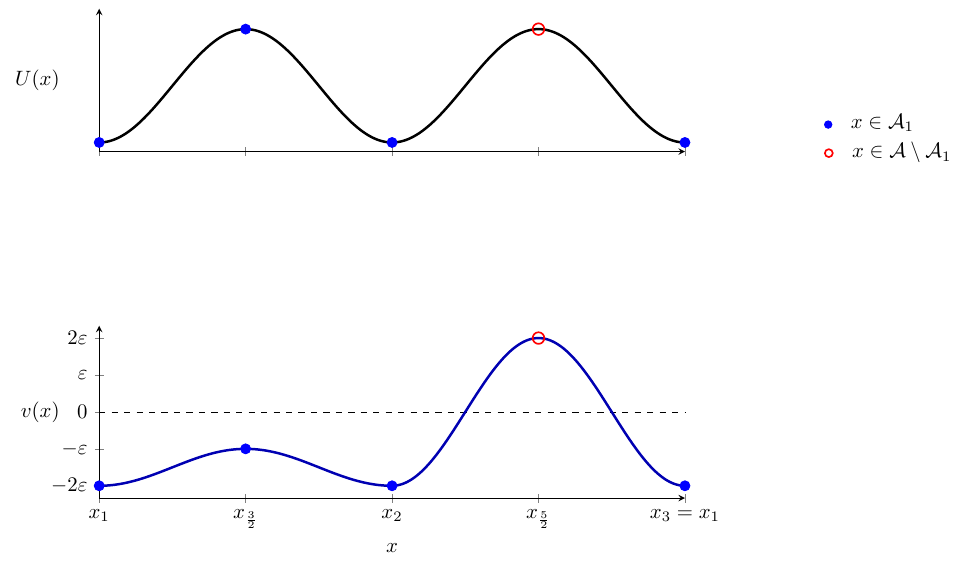}
    \caption{Construction of $v(x)$.}
    \label{fig:ex1}
\end{figure}

Consequently, for every nonempty proper subset
$\mathcal A_1\subsetneq\mathcal A$ satisfying \eqref{eq:comp}, we have $u_\lambda\to
\min\limits_{y\in\mathcal A_1}h^\infty(y,\cdot)$
uniformly as $\lambda\to0^+$. By Lemma \ref{u0<A}, applied to the Dirac Mather measures
$\delta_{(y,0)}$ with $y\in\mathcal A_1$, we have $u_0(y)\leqslant0$ for all $y\in\mathcal A_1$.
Since $0\in\mathcal S$, we also have $u_0\geqslant0$, and hence
\[
u_0=0\qquad\text{on }\mathcal A_1.
\]
Since $\mathcal A_1\subsetneq\mathcal A$ and \eqref{eq:comp} holds, there exists at least one local maximum
$x_{i+\frac12}\notin\mathcal A_1$. At such a point, $v(x_{i+\frac12})=2\varepsilon$.
Moreover, one can check that $v\in\mathcal S$. Hence,
\[
u_0(x_{i+\frac12})\geqslant v(x_{i+\frac12})
=2\varepsilon>0.
\]
Consequently, the sign-changing discount selects a nonconstant critical
solution, whereas the solution selected in \cite{Da4} is $\min\limits_{y\in\mathcal A}h^\infty(y,x)\equiv0$,
as noted in \cite{GL}.

\medskip

As a simple example, let $U:\mathbb S^1\to\mathbb R$ be a smooth function having exactly two critical points $0$ (minimum) and $X$ (maximum) and take $b=0$. Then
\[h^\infty(0,x)=U(x)-U(0),\qquad h^\infty(X,x)\equiv 0.\]
If $a(0)>0$ and $a(X)<0$, then the maximal solution of \eqref{e11} uniformly converges to
\[U(x)-U(0).\]

\subsection{Example 2}\label{ex2}
Consider
\begin{equation}\label{e21}
\lambda a(x)u(x)+H\big(x,Du(x)\big)=0\qquad\textrm{in }M,
\end{equation}
where $H(x,p)=H(x,-p)$, $\max_{x\in M}H(x,0)=0$ and $H(x,0)$ has finitely many maximizers $\{x_i\}_{i=1}^k$. By \cite[Proposition 7]{Da3}, the projected Aubry set of
\[
H\big(x,Du(x)\big)=0\qquad\textrm{in }M
\]
is $\mathcal A=\{x_i\}_{i=1}^k$. Let $\emptyset\neq\{y_j\}_{j\in J}\subsetneq\{x_i\}_{i=1}^k$, and choose $a(y_j)>0$ for $j\in J$ and $a(x_i)<0$ for $x_i\notin\{y_j\}_{j\in J}$. If hypothesis {\rm (S)} of Theorem \ref{thm1} is satisfied, then the same computation as in Section \ref{ex1} shows that the maximal solution of \eqref{e21} converges uniformly to
\[u_0(x)=\min_{j\in J}h^\infty(y_j,x).
\]
Here the critical solution selected by the limit in \cite{Da4} is $\min\limits_{1\leqslant i\leqslant k}h^\infty(x_i,x)$.

\medskip

The hypothesis {\rm (S)} can hold without adding $-A\lambda$. As a concrete example, consider
\begin{equation}\label{e212}
\lambda a(x)u(x)+\big(u'(x)\big)^2-U(x)=0
\quad\textrm{in}\quad\mathbb S^1\simeq[0,1),
\end{equation}
where $U:\mathbb S^1\to\mathbb R$ satisfies
\[
U\geqslant 0,\qquad \min_{\mathbb S^1}U=0,
\qquad U(0)=U(X)=0,\qquad U(x)>0\quad \text{for }x\neq 0,\ X.
\]
Then $\mathcal A=\mathcal M=\{0,X\}$. Let $a(0)>0$ and $a(X)<0$. By continuity, we may choose disjoint neighborhoods $N_0$ and $N_X$
of $0$ and $X$, respectively, such that
\[
a>0\quad\text{on }N_0,\quad
a<0\quad\text{on }N_X,
\]
and choose $\psi\in C^\infty(\mathbb S^1)$ such that
\[
\psi=-1\quad\text{on }N_0,\quad
\psi=1\quad\text{on }N_X,\quad \psi'=0\quad\text{on }N:=N_0\cup N_X.
\]
Then there exists $\kappa>0$ such that
\[
a(x)\psi(x)\leqslant-\kappa
\qquad\text{on }N.
\]
Since $U$ vanishes only at $0$ and $X$, there exists $\eta>0$ such that
\[
U(x)\geqslant\eta
\qquad\text{on }\mathbb S^1\setminus N.
\]
For $v=\varepsilon\psi$, choosing $\varepsilon>0$ sufficiently small gives
\[
(v')^2-U
=\varepsilon^2|D\psi|^2-U
\leqslant-\frac{\eta}{2}
\qquad\text{on }\mathbb S^1\setminus N.
\]
Hence, for all sufficiently small $\lambda>0$,
\[
(Dv)^2-U+\lambda a(x)v<0
\qquad\text{in }\mathbb S^1.
\]
Thus $v$ is a strict subsolution of \eqref{e212}. Since $v$ is fixed
and bounded from below, hypothesis {\rm (S)} holds.
Let $\widetilde U$ be the periodic extension of $U$ to $\mathbb R$ and define
\[p_\pm(x):=\pm\sqrt{\widetilde U(x)}.\]
There exists $s_1\in[0,1)$ such that
\[
\int_0^{s_1}p_+(x)\,dx+\int_{s_1}^1p_-(x)\,dx=0,
\]
and $s_2\in[X,X+1)$ such that
\[
\int_X^{s_2}p_+(x)\,dx+\int_{s_2}^{X+1}p_-(x)\,dx=0.
\]
Define
\[u_1(x)=
\begin{cases}
\displaystyle\int_0^xp_+(y)\,dy,&x\in[0,s_1],\\[1ex]
\displaystyle-\int_x^1p_-(y)\,dy,&x\in(s_1,1),
\end{cases}
\]
and
\[u_2(x)=
\begin{cases}
\displaystyle\int_X^xp_+(y)\,dy,&x\in[X,s_2],\\[1ex]
\displaystyle-\int_x^{X+1}p_-(y)\,dy,&x\in(s_2,X+1).
\end{cases}
\]
After periodic extension and projection onto $\mathbb S^1$, $u_1$ and $u_2$ are viscosity solutions of
\[
  \big(u'(x)\big)^2-U(x)=0
  \quad \textrm{in}\quad \mathbb S^1.
\]
We have
\[h^\infty(0,x)=u_1(x),\qquad h^\infty(X,x)=u_2(x).\]
Therefore, if $a(0)>0$ and $a(X)<0$, the maximal solution of \eqref{e212} converges uniformly to $u_1$; if $a(0)<0$ and $a(X)>0$, it converges uniformly to $u_2$. The critical solution selected by \cite{Da4} is $\min\{u_1,u_2\}$; see \cite[Section 5.1]{Zbook}.

\subsection{Example 3}\label{sec:4.3}
The preceding examples show that hypothesis {\rm (S)} may hold without needing a term $-A\lambda$. We now give an example showing that this is not true in general.

Let $M=\mathbb S^1\simeq[0,1)$, set
\[r(x):=\sin(2\pi x),\]
fix $\eta\in(0,1)$, and consider
\[
H(x,p):=\bigl(p-r(x)\bigr)^2-\eta^2r(x)^4.
\]
The critical value is $c_0=0$. Indeed, if $w'(x)=r(x)$, then $w$ is periodic and
\[H(x,Dw)=-\eta^2r(x)^4\leqslant0,\]
so $c_0\leqslant0$. On the other hand, $H(0,p)=p^2$, hence the associated Lagrangian satisfies $L(0,0)=0$; the closed measure $\delta_{(0,0)}$ therefore gives $c_0\geqslant0$. Moreover, the above critical subsolution is strict outside $\{0,\frac12\}$, while both $\delta_{(0,0)}$ and $\delta_{(\frac12,0)}$ are minimizing. Consequently,
\[\mathcal A=\mathcal M=\Big\{0,\frac12\Big\}.\]
Take
\[a(x):=-\cos(2\pi x),\qquad \mathcal A_1=\Big\{\frac12\Big\}.\]
Then $a(\frac12)=1$, $a(0)=-1$, and condition {\rm (A)} is satisfied.

\medskip

Consider now the shifted family from Remark \ref{rem:shift},
\begin{equation}\label{e3A}
\lambda a(x)u(x)+H\big(x,u'(x)\big)-A\lambda=0,
\qquad A\geqslant0.
\end{equation}
The zero level of $H$ has the two branches
\[
p_\pm(x)=r(x)\pm\eta r(x)^2.
\]
Both branches are positive on $(0,\frac12)$ and negative on $(\frac12,1)$. Set
\[
\varepsilon_*:=\int_0^{1/2}p_-(x)\,dx
=\frac1\pi-\frac\eta4>0.
\]
Every viscosity solution $u$ of the critical equation $H(x,Du)=0$ is Lipschitz and satisfies the equation almost everywhere. Hence,
\[
u\Big(\frac12\Big)-u(0)\geqslant\varepsilon_*.
\]
Suppose that there exist $\lambda_n\to0^+$ and solutions $u_{\lambda_n}$ of \eqref{e3A} which are uniformly bounded. The same argument as in Lemma \ref{leqA}, applied to the two Dirac Mather measures, gives
\[u_{\lambda_n}\Big(\frac12\Big)\leqslant A,
\qquad u_{\lambda_n}(0)\geqslant-A.
\]
By coercivity, the family $\{u_{\lambda_n}\}$ is equi-Lipschitz; after extraction, $u_{\lambda_n}\to u$ uniformly, where $H(x,Du)=0$. Therefore
\[
\varepsilon_*
\leqslant u\Big(\frac12\Big)-u(0)
\leqslant2A.
\]
Consequently, if
\[
0\leqslant A<A_*:=\frac{\varepsilon_*}{2}
=\frac{1}{2\pi}-\frac{\eta}{8},
\]
there is no uniformly bounded sequence of solutions of \eqref{e3A} as $\lambda\to0^+$. In particular, the analogue of {\rm (S)}
cannot hold for such $A$. Otherwise, Lemmas \ref{n11} and \ref{bM} would yield a
uniformly bounded family of maximal solutions. Moreover, hypothesis {\rm (S)} fails for the original equation
corresponding to $A=0$.

\section*{Acknowledgements}

Panrui Ni is supported by the JSPS grant: KAKENHI \# 26KF0103 and by the National Natural Science Foundation of China (Grant No. 12571197). Jun Yan is supported by the National Natural Science Foundation of China (Grant Nos. 12171096, 12231010, 12571197). 
Maxime Zavidovique is supported by ANR CoSyDy (ANR-CE40-0014).
M.Z.
gratefully acknowledges support from the Institute for Advanced Study, of which he was a 
member when this work was finished.

\section*{Declarations}

\noindent {\bf Conflict of interest statement:} The authors state that there is no conflict of interest.

\medskip

\noindent {\bf Data availability statement:} Data sharing is not applicable to this article as no datasets were generated or analysed during the current study.

\end{document}